\documentclass[print,wide,eng]{draft}
\hypersetup{hidelinks,pdftitle={A Transfer Principle for Elliptic Carleman Estimates: Application to Discrete Calder\'on Stability}}
\title{A Transfer Principle for Elliptic Carleman Estimates: Application to Discrete Calder\'on Stability}
\author{Qi L\"u\thanks{School of Mathematics, Sichuan University, Chengdu, 610064, China. {\small\it E-mail:} {\small\tt lu@scu.edu.cn}. Qi L\"u was supported by the National Natural Science Foundation of China
		under grant 12625102. } \quad and \quad Yu Wang\thanks{School of Mathematics, Southwest Jiaotong University, Chengdu, 611756, China. {\small\it E-mail:} {\small\tt yuwangmath@163.com.} Yu Wang was supported by the National Natural Science
		Foundation of China under grant 12401589 and the Sichuan Science and
		Technology Program under grant 2026NSFSC0777.}}
\date{}
\begin{document}
\maketitle
\begin{abstract}
We develop a transfer principle that lifts continuous elliptic Carleman estimates to mesh-uniform estimates for conforming Galerkin discretizations. The main difficulties are twofold: discrete states generally lack the \(H^2\)-regularity required by the continuous estimate, and the Carleman parameter \(s\) must be controlled relative to the mesh size \(h\). We overcome both by introducing a shifted continuous problem whose Galerkin solution is exactly the conforming reconstruction of a prescribed discrete state. Weighted approximation then transfers the inequality back to the original discrete operator without adding stabilization, and yields the range \(s\le ch^{-4/5}\) for radial weights. We verify the framework for piecewise affine finite elements with consistent or lumped mass on fitted, uniformly shape-regular meshes, and for standard Cartesian finite differences through their finite-element realization. As an application, we study the discrete Calder\'on problem for bounded complex Schr\"odinger potentials in dimension \(d\ge3\). Assuming agreement in a fixed connected boundary neighborhood and uniform stability of the discrete Dirichlet problems, we obtain stability from Dirichlet inputs and variational flux observations on independently prescribed, possibly disjoint boundary patches. The resulting estimate in \(H^{-r}\), \(r>d/2\), retains the single logarithmic data term and the \(O(h^{1/2})\) mesh remainder of the full-data estimate.
\end{abstract}

\noindent\textbf{Keywords.} Carleman estimate; transfer principle; finite element method; finite differences; quantitative unique continuation; discrete Calder\'on problem.

\medskip
\noindent\textbf{2020 Mathematics Subject Classification.} 65N12, 65N21, 65N30, 35R30, 35J10.

\section{Introduction}\label{sec:intro}

Many elliptic inverse and unique continuation problems ask how accurately a coefficient or a solution can be determined from observations on part of the domain or its boundary. Carleman estimates address this question through weighted inequalities that control a function by its equation residual and observations. By varying a large parameter in the weight, these inequalities yield quantitative propagation of smallness and stability estimates \cite{LR2012,ARRV2009}. For a numerical discretization, however, such stability must persist with constants that remain bounded as the mesh is refined. Forward convergence for smooth solutions does not by itself imply this mesh-uniform stability of the discrete inverse problem.

We establish a transfer principle that lifts continuous elliptic Carleman estimates to conforming Galerkin discretizations through weighted approximation. The transferred inequality holds for every discrete state and uses the original discrete operator, without adding stabilization. We verify the framework for \(P_1\) finite elements with consistent or lumped mass and for standard Cartesian finite differences through their finite-element realization. Our main application is the discrete Calder\'on problem: under boundary agreement and uniform Dirichlet stability, we obtain stability with Dirichlet inputs supported on one boundary patch and fluxes observed on another, independently prescribed and possibly disjoint patch.

While the primary focus of this work is the rigorous mathematical foundation of the transfer principle and its analytical consequences, the explicit parameter ranges (such as \(s \le ch^{-4/5}\)) and the exact discrete matrix structures provided (e.g., Schur complements and mass matrices) are designed to be directly implementable. We note that numerical experiments for the discrete Calderón problem are highly nontrivial due to the extreme sensitivity of the inverse problem and the high-frequency nature of the CGO probes, which can lead to severe conditioning issues in floating-point arithmetic. Consequently, a comprehensive numerical study is beyond the scope of this paper and is left as an interesting direction for future work.

\subsection{A transfer principle for conforming discretizations}

A direct application of a continuous Carleman estimate to a finite-element function encounters two fundamental obstructions. First, conforming piecewise affine functions generally have jumps in their gradients across element interfaces and fail to belong to \(H^2(\Omega)\), the regularity required by the continuous estimate. Second, differentiating the exponential weight \(\theta=e^{s\rho}\) introduces powers of the large Carleman parameter \(s\), so an approximation error that is small as the mesh size \(h\) tends to zero for fixed \(s\) may cease to be small when \(s\) increases. The transfer must therefore specify a relation between \(h\) and \(s\) under which the reconstruction errors can be absorbed by the weighted terms controlled in the Carleman estimate; related parameter restrictions occur in \cite[Section~6.1]{Zuazua2007} and in direct discrete estimates \cite{Boyer2010,Boyer2010a,EdG}.

We address both difficulties simultaneously through a shifted continuous problem. For example, let \(V_h\subset H_0^1(\Omega)\) be a conforming piecewise affine space on a bounded convex polytope \(\Omega\), and let \(A_h\) be the consistent-mass Galerkin operator for \(-\Delta\). Given \(z_h\in V_h\), choose \(\kappa>0\) sufficiently large, independently of \(h,s\), and solve
\begin{align*}
(-\Delta+\kappa s^2)U=A_hz_h+\kappa s^2z_h,\qquad U|_{\partial\Omega}=0.
\end{align*}
The Galerkin solution is exactly \(z_h\), while \(U\in H^2(\Omega)\) is regular enough for the continuous estimate. The positive shift controls the terms generated by differentiating the weight. Weighted energy estimates and a duality argument with the reciprocal weight \(\theta^{-1}\) control \(U-z_h\) and allow the comparison errors to be absorbed. This construction adapts elliptic reconstruction \cite[Definition~2.1]{MN2003} by adding a shift depending on \(s\).

As a representative consequence, consider fitted, uniformly shape-regular simplicial meshes and fix an observation ball \(\omega\Subset\Omega\) centered at \(x_*\). For \(\rho(x)=e^{-\lambda|x-x_*|^2}\) with \(\lambda\) fixed sufficiently large, \cref{thm:radial-discrete} gives
\begin{align}\label{intro:discrete}
\lVert \theta\nabla z_h\rVert_{L^2(\Omega)}^2+s^2\lVert \theta z_h\rVert_{L^2(\Omega)}^2
\leq C\bigl(s^{-1}\lVert \theta A_hz_h\rVert_{L^2(\Omega)}^2
+s^2\lVert \theta z_h\rVert_{L^2(\omega)}^2\bigr)
\end{align}
for every \(z_h\in V_h\), \(0<h\leq h_0\), and \(s_0\leq s\leq ch^{-4/5}\), with constants independent of \(h,s\). We verify the numerical hypotheses for both consistent and lumped mass on fitted, conforming, uniformly shape-regular meshes; see \cref{prop:transfer-realizations}. No quasi-uniformity or additional quantitative grading condition is required, so local refinement is allowed. The abstract result, \cref{thm:transfer}, also covers non-self-adjoint operators under separate weighted regularity and approximation assumptions for the primal and adjoint problems, together with the stated compatibility conditions. The shifted reconstruction is shared with our related parabolic work \cite[Section~1.3]{ParabolicTransfer}, while the present work develops the elliptic realizations and the boundary continuation and inverse stability results below.

\subsection{Application to the discrete Calder\'on problem}
We consider the inverse problem for a conforming discretization of the Schr\"odinger equation \((-\Delta+q)u=0\). Given discrete Dirichlet values on the boundary mesh, we solve the finite-element system and measure the resulting variational boundary flux. The discrete Dirichlet-to-Neumann map \(\Lambda_h(q)\) sends the prescribed values to this flux; in nodal coordinates, it is the Schur complement obtained after eliminating the interior degrees of freedom. The discrete Calder\'on problem asks how accurately these boundary responses determine the potential used in the discrete operator.

In the continuous theory, complex geometrical optics (CGO) solutions, which probe Fourier modes of the coefficient, yield uniqueness and stability through boundary identities \cite{SU1987,Alessandrini1988}; Carleman estimates also give partial-data uniqueness \cite{KSU2007}. Ervedoza and de Gournay \cite{EdG} constructed exact discrete CGO solutions and proved mesh-uniform full-data stability, including a lumped-mass finite-element realization on structured meshes. On Cartesian grids of a cube, Lecaros, Ortega, P\'erez, and de Teresa \cite{Lecaros2023a} restricted the flux observation to all faces except one, and Zhao and Yuan \cite[Theorem~2.9]{ZY} reduced the observation region to any fixed nonempty open subset of a single face. Both partial-data results still allow Dirichlet inputs on the whole boundary; restricting the input as well is a question raised in \cite[Remark~2.13]{ZY}.

We obtain stability with Dirichlet inputs supported on a patch \(\Gamma_D\) and fluxes observed on another patch \(\Gamma_N\), independently prescribed and possibly disjoint. This provides the input localization proposed in \cite[Remark~2.13]{ZY} for the Laplace-based schemes considered here, under the boundary agreement and uniform Dirichlet stability assumptions below. Moreover, localizing both input and observation preserves the single logarithmic modulus and \(h^{1/2}\) mesh remainder of our full-data estimate. The numerical setting also extends beyond the Cartesian grids of \cite{Lecaros2023a,ZY}: we treat both consistent and lumped mass on fitted, conforming, uniformly shape-regular meshes, allowing local refinement.

Specifically, let \(\Omega\subset\mathbb R^d\), \(d\geq3\), be a bounded connected Lipschitz polyhedral domain, with the fitted mesh geometry of \cref{sec:models}. Assume that the bounded complex potentials \(q_1,q_2\) agree in a fixed connected neighborhood of the entire boundary, with the nodal interpretation for lumped mass specified in \cref{thm:patch}. We also assume that their discrete Dirichlet problems satisfy the uniform stability bound \cref{eq:dir-infsup}. The patches \(\Gamma_D,\Gamma_N\) are fixed nonempty open subsets of the interiors of boundary faces. Let \(\delta_{\mathrm{loc}}\) be the operator norm of the difference of the two boundary maps for inputs supported on \(\Gamma_D\) and fluxes observed on \(\Gamma_N\), as defined in \cref{eq:two-patch-data}. To compare potentials as functions, \(\mathcal Q_h\) retains the original potential for consistent mass and represents nodal values as constants on their control volumes for lumped mass, with zero extension outside \(\Omega\). For \(r>d/2\), \cref{thm:patch} yields
\begin{align}\label{intro:inverse}
\lVert \mathcal Q_h(q_1-q_2)\rVert_{H^{-r}(\mathbb R^d)}
\leq C\bigl(h^{1/2}+[\log(2+\delta_{\mathrm{loc}}^{-1})]^{-1}\bigr),
\qquad 0<h\leq h_0.
\end{align}
Here \(C,h_0\) may depend on the fixed geometry and patches but are independent of the mesh and the particular potentials within the prescribed bounds; the logarithmic term is zero when \(\delta_{\mathrm{loc}}=0\). The \(h^{1/2}\) scale in the underlying full-data Fourier estimate is already present in \cite[Theorem~2.7]{EdG}.

We reduce the two-patch problem to the full-data problem by two quantitative continuation steps, separated by reciprocity. With the input supported on \(\Gamma_D\), boundary continuation first controls the flux difference on the whole boundary from its restriction to \(\Gamma_N\). Reciprocity interchanges the roles of input and observation, so this also controls the flux difference on \(\Gamma_D\) for full-boundary inputs. A second continuation then controls the full-boundary flux difference for these inputs, giving the full-data discrepancy. Agreement of the potentials near the boundary makes the difference equation homogeneous where continuation is applied. In the continuous setting, boundary agreement is used in \cite{AK2012} to compare local and interior full-data maps, while quantitative Runge approximation provides a related approach \cite{RS2019}.

The full-data estimate and boundary continuation use different Carleman weights: a linear function \(\rho\) yields exact discrete CGO solutions, while the radial choice above yields the continuation estimates; related discrete propagation results appear in \cite{GM2014,LM2015,FBRRS}. The CGO construction retains the original stiffness and potential forms; both Carleman parameter ranges in \cref{cor:budgets} permit \(s\) to reach the scale \(h^{-1/2}\) used in the inverse estimate. The continuation steps bound the full-data discrepancy by a positive power of \(\delta_{\mathrm{loc}}\), up to a remainder of order \(e^{-ch^{-4/5}}\); see \cref{eq:boundary-cont}. In the full-data logarithmic estimate, this remainder contributes \(O(h^{4/5})\), which is bounded by the existing \(h^{1/2}\) term, while the power of \(\delta_{\mathrm{loc}}\) introduces no additional logarithm. As a further consequence, \cref{cor:finite-dimensional} gives mesh-uniform Lipschitz stability on sufficiently fine meshes for coefficient differences in a finite-dimensional model fixed independently of the mesh, under the same two-patch hypotheses.

\Cref{sec:interfaces,sec:realizations} establish the transfer principle and verify its numerical hypotheses. \Cref{sec:weights,sec:cgo,sec:local} develop the weighted estimates, discrete CGO solutions, and quantitative continuation. \Cref{sec:inverse} applies these tools to full-data and two-patch stability.

\section{A transfer principle for elliptic Carleman estimates}\label{sec:interfaces}
Our goal is to transfer a continuous Carleman estimate to the discrete operator \(A_h\). The main obstacle is that discrete states lack the regularity required by the continuous estimate. To overcome this, we construct, for each discrete state, a shifted continuous problem whose Galerkin solution is exactly its conforming reconstruction. Weighted energy and \(L^2\) comparison estimates then allow us to apply the continuous Carleman estimate to the reconstruction and absorb the error. In this section, we establish this mechanism in an abstract setting. We first state the continuous assumptions, then the discrete numerical assumptions, and finally prove the transfer theorem and its parameter budget.

\subsection{Continuous setting and analytic assumptions}

We begin by fixing the continuous setting. Let \(\Omega\subset\mathbb{R}^d\) be a bounded Lipschitz domain and set \(H=L^2(\Omega;\mathbb{C})\). Inner products and sesquilinear forms are linear in the first argument. Let \(V\) be a closed subspace of \(H^1(\Omega;\mathbb{C})\), dense in \(H\), and let \(a:V\times V\to\mathbb{C}\) be a continuous sesquilinear form, not necessarily Hermitian. Let \(A\) be a densely defined closed operator in \(H\), with \(D(A)\subset V\), associated with \(a\) through
\begin{align*}
a(u,v)=(Au,v)_H,\qquad u\in D(A),\quad v\in V,
\end{align*}
and let \(A^*\) be its Hilbert-space adjoint. We assume \(D(A^*)\subset V\) and the adjoint identity
\begin{align}\label{eq:adjoint-form}
a(v,w)=(v,A^*w)_H,\qquad v\in V,\quad w\in D(A^*).
\end{align}
Fix a family of positive weights \(\theta=\theta_s\), \(s\geq s_0\geq1\), such that \(\theta,\theta^{-1}\in W^{1,\infty}(\Omega)\) for each \(s\) and \(|\nabla\log\theta|\leq Cs\) with a constant independent of \(s\).
For \(\sigma=\theta\) or \(\theta^{-1}\) and a measurable \(G\subset\Omega\), define the weighted energy norm
\begin{align*}
\mathcal{N}_{\sigma,G}(v)^2
=\lVert \sigma\nabla v\rVert_{L^2(G)}^2+s^2\lVert \sigma v\rVert_{L^2(G)}^2,
\end{align*}
and the corresponding second-order quantity
\begin{align*}
\mathcal{R}_{\sigma,G}(v)
=\lVert \sigma D^2v\rVert_{L^2(G)}
+s\lVert \sigma\nabla v\rVert_{L^2(G)}+s^2\lVert \sigma v\rVert_{L^2(G)}.
\end{align*}
Here \(D^2v\) denotes the Hessian with its Euclidean component norm.

Unless otherwise stated, \(C,c>0\) denote generic constants, which may change from one occurrence to the next. They are independent of \(h,s\) and of the functions and loads being estimated. We write \(X\asymp Y\) when \(cY\leq X\leq CY\) with such constants.

The shifted problem requires a sufficiently large parameter \(\kappa>0\). We impose the following assumptions uniformly for \(s\geq s_0\).
\begin{enumerate}[label=\textbf{(A\arabic*)},ref=(A\arabic*)]
\item\label[assumption]{ass:weighted} \emph{Weighted form bounds.}
Multiplication by \(\theta^2\) preserves \(V\). For some fixed \(\kappa>0\), the shifted form
\begin{align*}
a_s(u,v)=a(u,v)+\kappa s^2(u,v)_H
\end{align*}
satisfies the continuity bound
\begin{align}\label{eq:form-bounds}
|a_s(u,v)|\leq C\mathcal{N}_{\theta,\Omega}(u)
\mathcal{N}_{\theta^{-1},\Omega}(v),\qquad u,v\in V,
\end{align}
and the weighted coercivity estimate
\begin{align}\label{eq:form-coercivity}
\operatorname{Re} a_s(v,\theta^2v)\geq c\mathcal{N}_{\theta,\Omega}(v)^2,
\qquad v\in V.
\end{align}
\item\label[assumption]{ass:regularity} \emph{Primal and adjoint shifted regularity.}
We assume $D(A),D(A^*)\subset H^2(\Omega)$. For the same fixed \(\kappa\) and every \(g\in H\), the equation \((A+\kappa s^2)U=g\) has a unique solution \(U\in D(A)\) satisfying
\begin{align}\label{eq:screen-reg}
\mathcal{R}_{\theta,\Omega}(U)\leq C\lVert \theta g\rVert_{L^2(\Omega)}.
\end{align}
For every \(f\in H\), the adjoint equation \((A^*+\kappa s^2)Y=f\) has a unique solution \(Y\in D(A^*)\) satisfying
\begin{align}\label{eq:screen-reg-adjoint}
\mathcal{R}_{\theta^{-1},\Omega}(Y)\leq C\lVert \theta^{-1}f\rVert_{L^2(\Omega)}.
\end{align}
\item\label[assumption]{ass:carleman} \emph{Continuous Carleman estimate.}
For a fixed \(\gamma\in[0,1]\), a nonnegative quadratic form \(\mathcal{E}_s\) on \(V\), and a seminorm \(\mathcal{O}_s\) on \(V\),
\begin{align}\label{eq:continuousabstract}
\mathcal{E}_s(v)\leq C\bigl(s^{-\gamma}\lVert \theta Av\rVert_{L^2(\Omega)}^2
+\mathcal{O}_s(v)^2\bigr),
\qquad v\in D(A),
\end{align}
where \(\mathcal{E}_s(v)\asymp\mathcal{N}_{\theta,\Omega}(v)^2\) and \(\mathcal{O}_s(v)\leq C\mathcal{N}_{\theta,\Omega}(v)\) for \(v\in V\).
\end{enumerate}
We fix this choice of \(\kappa\) before choosing any numerical smallness threshold.

\subsection{Discrete scheme and numerical assumptions}
We now introduce the discrete counterparts. For each \(h\), let \(X_h\) be a finite-dimensional complex Hilbert space with prescribed inner product \((\cdot,\cdot)_{X_h}\). Let \(R_h:X_h\to V\) be an injective linear conforming reconstruction with range \(V_h\). Define the discrete operator \(A_h\) by
\begin{align}\label{eq:stiffnessinterface}
(A_hz,w)_{X_h}=a(R_hz,R_hw),\qquad z,w\in X_h.
\end{align}
For conforming finite elements, one may take \(X_h=V_h\) and \(R_h\) to be the natural inclusion, while the inner product on \(X_h\) may represent either consistent or lumped mass.

The numerical assumptions below require that the discrete space can adequately represent the continuous solution at the shifted frequency, and that weighted testing is possible within \(V_h\). They are required uniformly for the values of \(h>0\) and \(s\geq s_0\) under consideration.
\begin{enumerate}[label=\textbf{(N\arabic*)},ref=(N\arabic*)]
\item\label[assumption]{ass:loads} \emph{Stable load lifting.}
Let \(\lVert\cdot\rVert_{X_h,\theta}\) be a weighted norm on \(X_h\) used to measure discrete loads. There is a linear lift \(\mathcal{L}_h:X_h\to H\) satisfying
\begin{align}\label{eq:load-lifting}
(\mathcal{L}_h f,R_hz)_H=(f,z)_{X_h},\qquad f,z\in X_h,
\end{align}
and the weighted bound
\begin{align}\label{eq:load-stability}
\lVert \theta\mathcal{L}_h f\rVert_{L^2(\Omega)}\leq C\lVert f\rVert_{X_h,\theta}.
\end{align}
\item\label[assumption]{Papprox} \emph{Primal and adjoint weighted approximation.}
Let \(\alpha(h,s),\alpha^*(h,s)\geq0\) be the primal and adjoint approximation bounds, respectively. The primal domain satisfies
\begin{align}\label{eq:approximation}
\inf_{v_h\in V_h}\mathcal{N}_{\theta,\Omega}(v-v_h)
\leq \alpha(h,s)\mathcal{R}_{\theta,\Omega}(v),
\qquad v\in D(A).
\end{align}
The adjoint domain satisfies
\begin{align}\label{eq:approximation-adjoint}
\inf_{w_h\in V_h}\mathcal{N}_{\theta^{-1},\Omega}(w-w_h)
\leq \alpha^*(h,s)\mathcal{R}_{\theta^{-1},\Omega}(w),
\qquad w\in D(A^*).
\end{align}
\item\label[assumption]{Ptest} \emph{Weighted testing.}
For every \(v_h\in V_h\), there is a test function \(w_h\in V_h\) such that
\begin{align}\label{eq:testsuper}
\mathcal{N}_{\theta^{-1},\Omega}
(w_h-\theta^2v_h)
\leq\frac{c}{2C}\mathcal{N}_{\theta,\Omega}(v_h),
\end{align}
where \(C\) and \(c\) are the constants in \cref{eq:form-bounds,eq:form-coercivity}, respectively.
\item\label[assumption]{ass:observation} \emph{Energy and observation compatibility.}
There are a nonnegative quadratic form \(\mathcal{E}_{s,h}\) and a seminorm \(\mathcal{O}_{s,h}\) on \(X_h\) representing the discrete energy and observation, with
\begin{align}\label{eq:energy-compatibility}
\mathcal{E}_{s,h}(z)\asymp\mathcal{E}_s(R_hz),\qquad z\in X_h,
\end{align}
and
\begin{align}\label{eq:observation-compatibility}
\mathcal{O}_s(R_hz)\leq C\mathcal{O}_{s,h}(z),\qquad z\in X_h.
\end{align}
\end{enumerate}

\subsection{The transfer theorem}

The transfer theorem first bounds the discrete energy in terms of a load \(F\in H\) representing \(a(R_hz,\cdot)\) on \(V_h\). Under \cref{ass:loads}, choosing \(F=\mathcal{L}_h A_hz\) yields an estimate in terms of the discrete residual \(A_hz\). The key condition is that the comparison errors, measured by \(\alpha\) and \(\alpha^*\), are small enough to be absorbed by the coercive terms.

\begin{theorem}\label{thm:transfer}
	Assume \cref{ass:weighted,ass:regularity,ass:carleman} and \cref{Papprox,Ptest,ass:observation} hold. There exist constants \(c,C>0\), depending only on the fixed data, such that for all \(h>0\) and \(s\geq s_0\) satisfying
	\begin{align}\label{eq:transfer-criterion}
		s^2\alpha(h,s)^2+s^{6-\gamma}\alpha(h,s)^2\alpha^*(h,s)^2\leq c,
	\end{align}
	the following estimates hold. For every \(z\in X_h\) and \(F\in H\) satisfying
	\begin{align}\label{eq:load-identity}
		a(R_hz,R_hw)=(F,R_hw)_H,\qquad w\in X_h,
	\end{align}
	we have
	\begin{align}\label{eq:exact-load-CE}
		\mathcal{E}_{s,h}(z)
		\leq C\bigl(s^{-\gamma}\lVert \theta F\rVert_{L^2(\Omega)}^2
		+\mathcal{O}_{s,h}(z)^2\bigr).
	\end{align}
	If, in addition, \cref{ass:loads} holds, then
	\begin{align}\label{eq:discrete-CE}
		\mathcal{E}_{s,h}(z)
		\leq C\bigl(s^{-\gamma}\lVert A_hz\rVert_{X_h,\theta}^2
		+\mathcal{O}_{s,h}(z)^2\bigr),
		\qquad z\in X_h.
	\end{align}
\end{theorem}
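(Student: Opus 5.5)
Given \(z\in X_h\) and \(F\) satisfying \cref{eq:load-identity}, set \(v_h=R_hz\in V_h\). Let \(U\in D(A)\) be the solution of the shifted problem \((A+\kappa s^2)U=F+\kappa s^2v_h\) given by \cref{ass:regularity}. Then for every \(w_h\in V_h\),
\begin{align*}
a_s(U,w_h)=(F,w_h)_H+\kappa s^2(v_h,w_h)_H=a_s(v_h,w_h),
\end{align*}
so \(v_h\) is the Galerkin projection of \(U\) for \(a_s\) and the error \(e=U-v_h\) satisfies the Galerkin orthogonality \(a_s(e,w_h)=0\) on \(V_h\). The regularity bound \cref{eq:screen-reg} gives
\begin{align*}
\mathcal{R}_{\theta,\Omega}(U)\le C\bigl(\lVert\theta F\rVert_{L^2(\Omega)}+s^2\lVert\theta v_h\rVert_{L^2(\Omega)}\bigr)\le C\bigl(\lVert\theta F\rVert_{L^2(\Omega)}+s\,\mathcal{N}_{\theta,\Omega}(v_h)\bigr).
\end{align*}
I will apply \cref{eq:continuousabstract} to \(U\), where \(AU=F+\kappa s^2(v_h-U)=F-\kappa s^2 e\), and then pass from \(U\) back to \(v_h\). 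This leaves two error quantities to control: \(\mathcal{N}_{\theta,\Omega}(e)\), which enters through \(\mathcal{E}_s\) and \(\mathcal{O}_s\), and \(s^{2-\gamma/2}\lVert\theta e\rVert_{L^2(\Omega)}\), which enters through the residual term.

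For the weighted energy estimate (a weighted Céa lemma), take \(\tilde v_h\in V_h\) nearly optimal in \cref{eq:approximation}. Write \(\eta=U-\tilde v_h\) and \(\xi_h=\tilde v_h-v_h\in V_h\), so that \(e=\eta+\xi_h\). By \cref{Ptest}, choose \(w_h\in V_h\) with \(\mathcal N_{\theta^{-1}}(w_h-\theta^2\xi_h)\le \frac{c}{2C}\mathcal N_\theta(\xi_h)\). Then coercivity, orthogonality and continuity give
\begin{align*}
c\mathcal N_\theta(\xi_h)^2\le \operatorname{Re}a_s(\xi_h,\theta^2\xi_h)
=\operatorname{Re}\bigl[a_s(\xi_h,\theta^2\xi_h-w_h)-a_s(\eta,w_h)\bigr].
\end{align*}
The first term on the right is at most \(\frac c2\mathcal N_\theta(\xi_h)^2\). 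For the second, \(|a_s(\eta,w_h)|\le C\mathcal N_\theta(\eta)\,\mathcal N_{\theta^{-1}}(w_h)\). Here \(\mathcal N_{\theta^{-1}}(\theta^2\xi_h)\le C\mathcal N_\theta(\xi_h)\), because \(|\nabla\theta|\le Cs\theta\). Combining these, \(\mathcal N_\theta(e)\le C\alpha\,\mathcal R_\theta(U)\le C\alpha\bigl(\lVert\theta F\rVert+s\mathcal N_\theta(v_h)\bigr)\).

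For the \(L^2\) bound I will use an Aubin--Nitsche duality argument with the reciprocal weight. Let \(Y\) solve \((A^*+\kappa s^2)Y=\theta^2 e\), so that \(\lVert\theta e\rVert^2=a_s(e,Y)\) by \cref{eq:adjoint-form}. By \cref{eq:screen-reg-adjoint}, \(\mathcal R_{\theta^{-1}}(Y)\le C\lVert\theta e\rVert\). Subtracting the best \(w_h\) from \cref{eq:approximation-adjoint} and using orthogonality and \cref{eq:form-bounds} gives
\begin{align*}
\lVert\theta e\rVert\le C\alpha^*\mathcal N_\theta(e)\le C\alpha\alpha^*\bigl(\lVert\theta F\rVert+s\mathcal N_\theta(v_h)\bigr).
\end{align*}

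To conclude, note that \(\mathcal E_s(v_h)\le C\mathcal N_\theta(v_h)^2\le C\bigl(\mathcal E_s(U)+\mathcal N_\theta(e)^2\bigr)\), where the first inequality uses \(\mathcal E_s\asymp\mathcal N_\theta^2\). Applying \cref{eq:continuousabstract} to \(U\) yields
\begin{align*}
\mathcal E_s(U)\le C\bigl(s^{-\gamma}\lVert\theta F\rVert^2+s^{4-\gamma}\lVert\theta e\rVert^2+\mathcal O_s(v_h)^2+\mathcal N_\theta(e)^2\bigr),
\end{align*}
where \(\mathcal O_s(U)\le \mathcal O_s(v_h)+C\mathcal N_\theta(e)\) by the seminorm property. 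Inserting the two error bounds shows that the coefficients of \(\mathcal N_\theta(v_h)^2\) are \(Cs^2\alpha^2\) and \(Cs^{6-\gamma}\alpha^2\alpha^{*2}\), both small by \cref{eq:transfer-criterion}, so these terms can be absorbed into the left side. The leftover \(\lVert\theta F\rVert^2\) contributions carry coefficients \(\alpha^2\) and \(s^{4-\gamma}\alpha^2\alpha^{*2}\). Both are at most \(Cs^{-\gamma}\) under \cref{eq:transfer-criterion}, since \(\alpha^2\le cs^{-2}\le cs^{-\gamma}\) and \(s^{4-\gamma}\alpha^2\alpha^{*2}\le cs^{-2}\).

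Finally, \cref{ass:observation} converts \(\mathcal E_s(v_h)\) and \(\mathcal O_s(v_h)\) into \(\mathcal E_{s,h}(z)\) and \(\mathcal O_{s,h}(z)\), which gives \cref{eq:exact-load-CE}. For \cref{eq:discrete-CE}, take \(F=\mathcal L_hA_hz\); \cref{eq:load-lifting} together with \cref{eq:stiffnessinterface} verifies \cref{eq:load-identity}, and \cref{eq:load-stability} bounds \(\lVert\theta F\rVert\) by \(C\lVert A_hz\rVert_{X_h,\theta}\).

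The main obstacle is the weighted Céa step. Galerkin orthogonality holds only for the unweighted pairing, so coercivity is available only through \(\theta^2\)-testing, and that test function must be realized inside \(V_h\) by \cref{Ptest}. The constants must be tracked carefully: the \(\frac{c}{2C}\) threshold in \cref{eq:testsuper} and the equivalence \(\mathcal N_{\theta^{-1}}(\theta^2\cdot)\asymp\mathcal N_\theta(\cdot)\) both rely on \(|\nabla\log\theta|\le Cs\).
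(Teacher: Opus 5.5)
Your proposal is correct and follows the paper's proof essentially step for step. You use the shifted load $F+\kappa s^2R_hz$ so that $R_hz$ is the Galerkin solution, a weighted C\'ea estimate through the weighted testing assumption (N3), reciprocal-weight Aubin--Nitsche duality, the continuous Carleman estimate applied to $U$, absorption under the criterion, and finally the load lift. The only differences are cosmetic: you inline the comparison lemma, correctly dropping the discrete unique-solvability step since $R_hz$ is given, and you absorb into $\mathcal{N}_{\theta,\Omega}(R_hz)^2$ rather than $\mathcal{E}_{s,h}(z)$.
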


\Cref{eq:exact-load-CE} is useful after localization: a cutoff commutator can be represented by a local \(L^2\) load even when the strong discrete operator contains the inverse of a global mass matrix.

For piecewise affine approximation, the two error scales yield an explicit range of the Carleman parameter.
\begin{corollary}\label{cor:budgets}
	Assume the analytic and numerical assumptions of \cref{thm:transfer} hold uniformly for \(h>0\), \(s\geq s_0\), and \(hs\leq c_0\) (with \(c_0>0\) fixed), and that \(\alpha(h,s),\alpha^*(h,s)\leq Ch\). Then there exist constants \(c,h_0>0\), depending only on the fixed data, such that the estimates of \cref{thm:transfer} hold whenever
	\begin{align}\label{eq:p1-range}
		0<h\leq h_0,\qquad s_0\leq s\leq ch^{-4/(6-\gamma)}.
	\end{align}
\end{corollary}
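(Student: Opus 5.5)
The corollary follows by substituting \(\alpha,\alpha^*\le Ch\) into the criterion \cref{eq:transfer-criterion} of \cref{thm:transfer} and checking that the resulting parameter window is nonempty and compatible with the standing restriction \(hs\le c_0\).

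\textbf{Step 1: the criterion under the approximation bounds.} With \(\alpha,\alpha^*\le Ch\), the left side of \cref{eq:transfer-criterion} is at most
\begin{align*}
C\bigl(s^2h^2+s^{6-\gamma}h^4\bigr).
\end{align*}
Let \(c_1\) denote the threshold from \cref{thm:transfer}. It suffices to make each term at most \(c_1/2\).

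\textbf{Step 2: the dominant term.} The second term is small exactly when \(s^{6-\gamma}h^4\le c'\), that is, when \(s\le c''h^{-4/(6-\gamma)}\). Since \(\gamma\in[0,1]\), we have \(6-\gamma\ge5\), so \(4/(6-\gamma)\le 4/5<1\). Hence, for \(h\le1\), the bound \(s\le c h^{-4/(6-\gamma)}\) gives
\begin{align*}
sh\le c\,h^{1-4/(6-\gamma)}\le c\,h^{1/5}.
\end{align*}
Shrinking \(c\) and \(h_0\) then makes \(s^2h^2\le c_1/2\) as well. The same bound yields \(hs\le c_0\), so the assumptions of \cref{thm:transfer} are in force on this range.

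\textbf{Step 3: nonemptiness of the window.} The window \(s\ge s_0\) is nonempty once \(c\,h_0^{-4/(6-\gamma)}\ge s_0\). This holds after choosing \(h_0\) small, since the exponent is negative. No new analysis is needed beyond \cref{thm:transfer}.

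\textbf{Main obstacle.} There is no real obstacle; the argument is bookkeeping of exponents. The one point needing care is to choose the constants in the right order, so that \(c\) and \(h_0\) depend only on the fixed data: first take \(c_1\) from the theorem, then \(c\), then \(h_0\). Note also that for \(\gamma=1\) the range becomes \(s\le ch^{-4/5}\), matching the radial case.
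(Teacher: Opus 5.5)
Your proposal is correct and matches the paper's proof: bound the criterion by \(C(h^2s^2+h^4s^{6-\gamma})\), use \(4/(6-\gamma)<1\) to get \(hs\le c\) for \(h\le 1\) (which controls the first term and enforces \(hs\le c_0\)), take \(c\) small, and then take \(h_0\le 1\) small so the window is nonempty. The only cosmetic difference is your extra factor \(h^{1/5}\) in \(sh\le c\,h^{1/5}\); it is harmless but unnecessary, since shrinking \(c\) alone already suffices.
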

The exponents in \cref{eq:p1-range} are \(2/3\) for \(\gamma=0\) and \(4/5\) for \(\gamma=1\).
\begin{proof}
The left-hand side of \cref{eq:transfer-criterion} is bounded by \(C(h^2s^2+h^4s^{6-\gamma})\).
For \(h\leq1\) and \(s\leq ch^{-4/(6-\gamma)}\), the two terms are bounded by \(Cc^2\) and \(Cc^{6-\gamma}\), respectively, while \(hs\leq c\).
Choose \(c\leq c_0\) so that \cref{eq:transfer-criterion} holds, and then \(h_0\leq1\) so that the interval in \cref{eq:p1-range} is nonempty.
\end{proof}

\subsection{Weighted Galerkin comparison and reciprocal-weight duality}\label{sec:error}

The proof of \cref{thm:transfer} relies on comparing the exact continuous solution of the shifted problem with its Galerkin approximation. The next lemma combines a weighted C\'ea estimate for the energy error with an Aubin--Nitsche argument for the reciprocal weight to control the \(L^2\) error.
\begin{lemma}\label{lem:comparison}
	Assume \cref{ass:weighted,ass:regularity} and \cref{Papprox,Ptest} hold. Let \(g\in H\) and \(U=(A+\kappa s^2)^{-1}g\). There is a unique \(U_h\in V_h\) satisfying
	\begin{align}\label{eq:screened-galerkin}
		a_s(U_h,v_h)=(g,v_h)_H,\qquad v_h\in V_h.
	\end{align}
	The energy error satisfies
	\begin{align}\label{eq:comparison}
		\mathcal{N}_{\theta,\Omega}(U-U_h)
		\leq C\alpha(h,s)\lVert \theta g\rVert_{L^2(\Omega)}.
	\end{align}
	Moreover, the weighted \(L^2\) error satisfies
	\begin{align}\label{eq:comparison-l2}
		\lVert \theta(U-U_h)\rVert_{L^2(\Omega)}
		\leq C\alpha(h,s)\alpha^*(h,s)\lVert \theta g\rVert_{L^2(\Omega)}.
	\end{align}
\end{lemma}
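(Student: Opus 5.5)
The plan has three steps: a discrete weighted inf--sup bound for \(a_s\) built from \cref{Ptest}, a Céa-type quasi-optimality estimate, and an Aubin--Nitsche duality argument in which the dual load is \(\theta^2(U-U_h)\), so that the dual problem is measured with the reciprocal weight \(\theta^{-1}\).

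\emph{Step 1 (discrete inf--sup).} Fix \(v_h\in V_h\) and take the test function \(w_h\in V_h\) given by \cref{Ptest}. Write \(w_h=\theta^2v_h+(w_h-\theta^2v_h)\). Then \cref{eq:form-coercivity} and \cref{eq:form-bounds} give
\begin{align*}
\operatorname{Re}a_s(v_h,w_h)\geq c\mathcal N_{\theta,\Omega}(v_h)^2-C\mathcal N_{\theta,\Omega}(v_h)\,\tfrac{c}{2C}\mathcal N_{\theta,\Omega}(v_h)=\tfrac c2\mathcal N_{\theta,\Omega}(v_h)^2 .
\end{align*}
Next we bound the test function. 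Since \(\nabla(\theta^2v)=\theta^2\nabla v+2\theta^2(\nabla\log\theta)v\) and \(|\nabla\log\theta|\leq Cs\), we get \(\mathcal N_{\theta^{-1},\Omega}(\theta^2v)\leq C\mathcal N_{\theta,\Omega}(v)\). Together with \cref{eq:testsuper} this yields \(\mathcal N_{\theta^{-1},\Omega}(w_h)\leq C\mathcal N_{\theta,\Omega}(v_h)\). Hence
\begin{align*}
\sup_{0\neq w\in V_h}\frac{|a_s(v_h,w)|}{\mathcal N_{\theta^{-1},\Omega}(w)}\geq c'\mathcal N_{\theta,\Omega}(v_h),\qquad v_h\in V_h .
\end{align*}
The quantity \(\mathcal N_{\theta^{-1},\Omega}\) is a norm on \(V_h\) (the \(s^2\lVert\theta^{-1}w\rVert^2\) term is positive definite), so the sup is over a well-defined quotient. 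The inf--sup bound makes the square Galerkin system \cref{eq:screened-galerkin} injective, and therefore uniquely solvable, since \(V_h\) is finite-dimensional.

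\emph{Step 2 (weighted Céa).} Because \(g\in H\subset V'\), testing \((A+\kappa s^2)U=g\) against \(v_h\in V_h\subset V\) gives Galerkin orthogonality \(a_s(U-U_h,w)=0\) for \(w\in V_h\). For arbitrary \(v_h\in V_h\), apply Step 1 to \(U_h-v_h\), use orthogonality, and then \cref{eq:form-bounds}:
\begin{align*}
c'\mathcal N_{\theta,\Omega}(U_h-v_h)\leq\sup_{w}\frac{|a_s(U-v_h,w)|}{\mathcal N_{\theta^{-1},\Omega}(w)}\leq C\mathcal N_{\theta,\Omega}(U-v_h).
\end{align*}
The triangle inequality and the infimum over \(v_h\) give \(\mathcal N_{\theta,\Omega}(U-U_h)\leq C\inf_{v_h}\mathcal N_{\theta,\Omega}(U-v_h)\). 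Since \(U\in D(A)\), \cref{eq:approximation} and \cref{eq:screen-reg} bound this by \(C\alpha\mathcal R_{\theta,\Omega}(U)\leq C\alpha\lVert\theta g\rVert\). This is \cref{eq:comparison}.

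\emph{Step 3 (reciprocal-weight duality).} Set \(e=U-U_h\) and \(f=\theta^2e\in H\), and let \(Y=(A^*+\kappa s^2)^{-1}f\) from \cref{ass:regularity}. By the adjoint identity \cref{eq:adjoint-form}, and since the \(H\)-inner product is antilinear in the second slot, consistent with \(a_s\),
\begin{align*}
\lVert\theta e\rVert^2=(e,f)_H=a_s(e,Y)=a_s(e,Y-y_h)\qquad\text{for every } y_h\in V_h ,
\end{align*}
where the last equality again uses Galerkin orthogonality. Now \cref{eq:form-bounds}, \cref{eq:approximation-adjoint} and \cref{eq:screen-reg-adjoint} give
\begin{align*}
\lVert\theta e\rVert^2\leq C\mathcal N_{\theta,\Omega}(e)\,\alpha^*\mathcal R_{\theta^{-1},\Omega}(Y)\leq C\mathcal N_{\theta,\Omega}(e)\,\alpha^*\lVert\theta^{-1}\theta^2e\rVert=C\alpha^*\mathcal N_{\theta,\Omega}(e)\lVert\theta e\rVert .
\end{align*}
Dividing by \(\lVert\theta e\rVert\) and inserting \cref{eq:comparison} yields \cref{eq:comparison-l2}.

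The main obstacle is Step 1. Continuous coercivity \cref{eq:form-coercivity} is tested against \(\theta^2v\), which does not lie in \(V_h\). The point is that the constant \(c/(2C)\) in \cref{Ptest} is tuned exactly so that the perturbation error absorbs into half of the coercive term, and that \(\theta^2v_h\) itself obeys an \(s\)-uniform \(\mathcal N_{\theta^{-1}}\) bound. Everything after that is the standard Céa and Aubin--Nitsche pattern with the pair of weights \(\theta,\theta^{-1}\).
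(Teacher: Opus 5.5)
Your proposal is correct and follows the paper's proof essentially step for step. Both derive discrete stability from \cref{Ptest} with the \(c/(2C)\) absorption and the product-rule bound \(\mathcal N_{\theta^{-1},\Omega}(\theta^2v_h)\leq C\mathcal N_{\theta,\Omega}(v_h)\), then obtain the weighted C\'ea estimate via Galerkin orthogonality, and close with the reciprocal-weight Aubin--Nitsche argument using the dual load \(\theta^2e\). Your inf--sup packaging is only a rephrasing of the paper's direct testing with \(w_h\).
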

\begin{proof}
	\emph{Step 1: discrete stability and the energy error.}
	We first establish discrete stability. For any \(\xi_h\in V_h\), choose a test function \(w_h\in V_h\) as in \cref{eq:testsuper}. By \cref{eq:form-bounds,eq:form-coercivity,eq:testsuper}, we have
	\begin{align*}
		\operatorname{Re} a_s(\xi_h,w_h)
		\geq\frac{c}{2}\mathcal{N}_{\theta,\Omega}(\xi_h)^2.
	\end{align*}
	Thus, if \(\xi_h\) is a homogeneous solution of \cref{eq:screened-galerkin}, it must vanish, which implies the discrete system is uniquely solvable.
	
	Next, we estimate the energy error. Fix an arbitrary approximant \(p_h\in V_h\) and set \(\eta=U-p_h\) and \(\xi_h=U_h-p_h\). Galerkin orthogonality gives \(a_s(\xi_h,v_h)=a_s(\eta,v_h)\) for all \(v_h\in V_h\). 
	To bound \(\xi_h\), choose \(w_h\in V_h\) as above. The product rule and the bound \(|\nabla\log\theta|\leq Cs\) imply \(\mathcal{N}_{\theta^{-1},\Omega}(\theta^2\xi_h)\leq C\mathcal{N}_{\theta,\Omega}(\xi_h)\). Together with \cref{eq:testsuper}, this yields
	\begin{align*}
		\mathcal{N}_{\theta^{-1},\Omega}(w_h) \leq C\mathcal{N}_{\theta,\Omega}(\xi_h).
	\end{align*}
	Testing the Galerkin orthogonality with \(w_h\), we obtain
	\begin{align*}
		\frac{c}{2}\mathcal{N}_{\theta,\Omega}(\xi_h)^2
		\leq\operatorname{Re}a_s(\xi_h,w_h)
		=\operatorname{Re}a_s(\eta,w_h)
		\leq C\mathcal{N}_{\theta,\Omega}(\eta)\mathcal{N}_{\theta,\Omega}(\xi_h).
	\end{align*}
	Dividing by \(\mathcal{N}_{\theta,\Omega}(\xi_h)\) yields \(\mathcal{N}_{\theta,\Omega}(\xi_h)\leq C\mathcal{N}_{\theta,\Omega}(\eta)\). The triangle inequality then gives
	\begin{align*}
		\mathcal{N}_{\theta,\Omega}(U-U_h) \leq \mathcal{N}_{\theta,\Omega}(\eta) + \mathcal{N}_{\theta,\Omega}(\xi_h) \leq C\mathcal{N}_{\theta,\Omega}(\eta).
	\end{align*}
	Since \(p_h\in V_h\) was arbitrary, we may take the infimum over \(p_h\). Applying \cref{eq:approximation,eq:screen-reg} then proves \cref{eq:comparison}.
	
	\emph{Step 2: the reciprocal-weight adjoint problem.}
	To obtain the \(L^2\) estimate, set \(e=U-U_h\) and consider the adjoint problem \((A^*+\kappa s^2)Y=\theta^2e\). 
	By the adjoint regularity \cref{eq:screen-reg-adjoint}, we have
	\begin{align*}
		\mathcal{R}_{\theta^{-1},\Omega}(Y)\leq C\lVert \theta e\rVert_{L^2(\Omega)}.
	\end{align*}
	Using \cref{eq:adjoint-form,eq:form-bounds} and Galerkin orthogonality, for any \(y_h\in V_h\) we have
	\begin{align*}
		\lVert \theta e\rVert_{L^2(\Omega)}^2
		&=(e,(A^*+\kappa s^2)Y)_H =a_s(e,Y) =a_s(e,Y-y_h)  \leq C\mathcal{N}_{\theta,\Omega}(e) \mathcal{N}_{\theta^{-1},\Omega}(Y-y_h).
	\end{align*}
	Taking the infimum over \(y_h\in V_h\) and applying the adjoint approximation \cref{eq:approximation-adjoint}, we obtain
	\begin{align*}
		\lVert \theta e\rVert_{L^2(\Omega)}^2
		\leq C\alpha^*(h,s)\mathcal{N}_{\theta,\Omega}(e) \lVert \theta e\rVert_{L^2(\Omega)}.
	\end{align*}
	Dividing by \(\lVert \theta e\rVert_{L^2(\Omega)}\) and combining the result with \cref{eq:comparison} proves \cref{eq:comparison-l2}.
\end{proof}

\subsection{Proof of the transfer theorem}\label{sec:transfer}
With the comparison lemma in hand, we can now prove the main transfer theorem by applying the continuous Carleman estimate to the shifted reconstruction and absorbing the error.

\begin{proof}[Proof of \cref{thm:transfer}]
	\emph{Step 1: reconstruct the prescribed discrete state.}
	Let $z\in X_h$ and \(F\in H\) satisfy \cref{eq:load-identity}. Define the auxiliary load
	\begin{align}\label{eq:aux-load}
		g=F+\kappa s^2R_hz,
	\end{align}
	and let \(U=(A+\kappa s^2)^{-1}g\). By \cref{eq:load-identity}, for every \(w\in X_h\) we have
	\begin{align*}
		a_s(R_hz,R_hw) =(F,R_hw)_H+\kappa s^2(R_hz,R_hw)_H =(g,R_hw)_H.
	\end{align*}
	Thus, \(R_hz\) is exactly the unique Galerkin solution \(U_h\) of \cref{eq:screened-galerkin}. Consequently, for the reconstruction error \(e=U-R_hz\), we have
	\begin{align}\label{eq:residual-cancellation}
		AU = g - \kappa s^2 R_hz = F - \kappa s^2 e.
	\end{align}
	
	\emph{Step 2: compare the Carleman terms.}
	We now apply the continuous Carleman estimate to the shifted continuous solution \(U\). By \cref{eq:residual-cancellation}, we bound the residual term as
	\begin{align*}
		s^{-\gamma}\lVert\theta AU\rVert_{L^2(\Omega)}^2
		\leq Cs^{-\gamma}\lVert\theta F\rVert_{L^2(\Omega)}^2
		+Cs^{4-\gamma}\lVert\theta e\rVert_{L^2(\Omega)}^2.
	\end{align*}
	By \cref{eq:energy-compatibility,eq:observation-compatibility,eq:continuousabstract} and the triangle inequality for \(\mathcal{O}_s\) (noting that \(\mathcal{O}_s(e)\leq C\mathcal{N}_{\theta,\Omega}(e)\)), we obtain
	\begin{align*}
		\mathcal{E}_{s,h}(z)
		\leq C\bigl(s^{-\gamma}\lVert \theta F\rVert_{L^2(\Omega)}^2
		+\mathcal{O}_{s,h}(z)^2
		+\mathcal{N}_{\theta,\Omega}(e)^2
		+s^{4-\gamma}\lVert \theta e\rVert_{L^2(\Omega)}^2\bigr).
	\end{align*}
	To control the error terms \(\mathcal{N}_{\theta,\Omega}(e)\) and \(\lVert\theta e\rVert_{L^2(\Omega)}\), we first bound the auxiliary load. By \cref{eq:aux-load,eq:energy-compatibility} and $\mathcal{E}_s\asymp\mathcal{N}_{\theta,\Omega}^2$,
	\begin{align*}
		\lVert \theta g\rVert_{L^2(\Omega)}^2
		&\leq C\lVert \theta F\rVert_{L^2(\Omega)}^2+Cs^4\lVert\theta R_hz\rVert_{L^2(\Omega)}^2
		\leq C\lVert \theta F\rVert_{L^2(\Omega)}^2+Cs^2\mathcal{E}_{s,h}(z).
	\end{align*}
	Substituting the comparison bounds from \cref{lem:comparison} into the energy inequality, we obtain the preabsorption estimate
	\begin{align}\label{eq:preabsorption}
		\mathcal{E}_{s,h}(z)
		&\leq C\bigl(s^{-\gamma}+\alpha(h,s)^2
		+s^{4-\gamma}\alpha(h,s)^2\alpha^*(h,s)^2\bigr)
		\lVert \theta F\rVert_{L^2(\Omega)}^2\notag\\
		&\quad+C\mathcal{O}_{s,h}(z)^2+C\bigl(s^2\alpha(h,s)^2
		+s^{6-\gamma}\alpha(h,s)^2\alpha^*(h,s)^2\bigr)\mathcal{E}_{s,h}(z).
	\end{align}
	
	\emph{Step 3: absorb the comparison error.}
	Fix $C$ as in \cref{eq:preabsorption} and choose \(c>0\) in \cref{eq:transfer-criterion} small enough so that $Cc\leq1/2$. Then the last term on the right-hand side of \cref{eq:preabsorption} can be absorbed into the left-hand side. 
	For the coefficient of \(\lVert\theta F\rVert_{L^2(\Omega)}^2\), we use the criterion \(s^2\alpha^2 \leq c\) to deduce \(s^{-\gamma}+\alpha^2+s^{4-\gamma}\alpha^2(\alpha^*)^2 \leq s^{-\gamma} + cs^{-2} + cs^{-2} \leq Cs^{-\gamma}\), where we used \(s\geq1\) and \(\gamma\leq1\). This proves \cref{eq:exact-load-CE}.
	
	If, in addition, \cref{ass:loads} holds, we choose \(F=\mathcal{L}_h A_hz\). By \cref{eq:load-lifting,eq:stiffnessinterface},
	\begin{align*}
		(F,R_hw)_H=(A_hz,w)_{X_h}=a(R_hz,R_hw),
	\end{align*}
	so \cref{eq:load-identity} is satisfied. Applying \cref{eq:exact-load-CE,eq:load-stability} then yields \cref{eq:discrete-CE}.
\end{proof}

\section{Verification for \texorpdfstring{$P_1$}{P1} finite elements and Cartesian finite differences}\label{sec:realizations}
In this section, we verify the abstract assumptions of \Cref{thm:transfer} for specific numerical schemes. Our goal is to establish the continuous regularity and discrete approximation properties that allow the Carleman estimate to be transferred to the discrete operator without stabilization. We proceed in three steps: first, we fix the finite-element setting and mass forms; second, we verify the continuous weighted regularity and the discrete approximation and testing assumptions; third, we construct the stable load lift and establish a reciprocal-weight duality. We conclude by showing that standard Cartesian centered finite differences fit directly into this framework through their finite-element realization.

Throughout this section, the approximation bounds will be shown to satisfy \(\alpha(h,s),\alpha^*(h,s)\leq Ch\) when \(hs\leq c_0\) for a sufficiently small fixed \(c_0>0\). The continuous Carleman estimate and energy and observation compatibility in \cref{ass:carleman,ass:observation} are deferred to \cref{sec:weights}.

\subsection{Finite-element setting and weighted norms}\label{sec:mass}

We begin by setting up the discrete framework. Let \(D\subset\mathbb{R}^d\) be a fixed bounded convex polytope, and take \(\Omega=D\) in the abstract framework.
Let \(\mathcal{T}_h(D)\) be a uniformly shape-regular family of conforming simplicial triangulations fitted to \(\partial D\). Write \(h_K=\operatorname{diam}K\) and \(h=\max_{K\in\mathcal{T}_h(D)}h_K\).
No quasi-uniformity is assumed.
For a mesh-fitted subdomain \(G\subset D\), let \(\mathcal{T}_h(G)\) consist of the elements in \(\mathcal{T}_h(D)\) whose union is \(\overline G\). Write \(V_h(G)\) for the continuous piecewise affine space on \(\mathcal{T}_h(G)\), and set \(V_{h,0}(G)=V_h(G)\cap H_0^1(G)\).
All spaces are complexified. An inner product without a subscript denotes the \(L^2\) inner product on the domain under consideration.
On \(D\), take \(X_h=V_{h,0}(D)\) and let \(R_h\) be the natural inclusion.

Let \(\{x_i\}\) be the vertices of \(\mathcal{T}_h(G)\), including boundary vertices, and write \(u_i=u_h(x_i)\) for \(u_h\in V_h(G)\), with the same convention for other discrete functions.
For \(u_h,v_h\in V_h(G)\), consistent mass is defined by
\begin{align}\label{eq:consistent}
m_h(u_h,v_h)=\int_G u_h\overline{v_h}\,dx.
\end{align}
For \(q\in L^\infty(G;\mathbb{C})\) with \(\lVert q\rVert_{L^\infty(G)}\leq M\), the associated potential form is
\begin{align}\label{eq:consistent-reaction}
c_h(q;u_h,v_h)=\int_G q\,u_h\overline{v_h}\,dx.
\end{align}
For lumped mass, we instead use
\begin{align}\label{eq:lumped}
m_h(u_h,v_h)=\sum_i m_i u_i\bar v_i,
\end{align}
where \(m_i=\sum\limits_{K\in\mathcal{T}_h(G),\,x_i\in K}|K|/(d+1)\).
For a nodal potential \(q=(q_i)\) with \(q_i\in\mathbb{C}\) and \(|q_i|\leq M\), the potential form is
\begin{align}\label{eq:lumped-reaction}
c_h(q;u_h,v_h)=\sum_i m_i q_i u_i\bar v_i.
\end{align}
All mass weights and forms on \(G\) are assembled over \(\mathcal{T}_h(G)\); their domain dependence is suppressed from the notation. The inner product on \(X_h\) is \((u_h,v_h)_{X_h}=m_h(u_h,v_h)\) with \(G=D\).
The transfer theorem is applied to the Dirichlet Laplacian \(A=-\Delta_D\), with form domain \(H_0^1(D)\); the potential form is used later in
\begin{align}\label{eq:aq}
a_{q,h}(u_h,v_h)=\int_G \nabla u_h\cdot\nabla\overline{v_h}\,dx+c_h(q;u_h,v_h).
\end{align}

Let \(\rho\in C^2(\overline D;\mathbb{R})\), \(s\geq1\), \(\theta=e^{s\rho}\), and \(\sigma=\theta^{\pm1}\).
From this section on, constants may depend on \(D\), \(M\), a fixed bound for \(\lVert\rho\rVert_{C^2(\overline D)}\), the shape-regularity bound, and, for local estimates, fixed domain separations. They are independent of \(h,s\) and local mesh grading.
For \(z_h\in X_h\), the weighted norm in the consistent-mass case is
\begin{align*}
\lVert z_h\rVert_{X_h,\sigma}^2=\int_D \sigma^2|z_h|^2\,dx.
\end{align*}
In the lumped-mass case, we define
\begin{align*}
\lVert z_h\rVert_{X_h,\sigma}^2=\sum_i m_i\sigma(x_i)^2|z_i|^2.
\end{align*}
For nodal data on a set \(E\subset\overline D\), the restricted lumped-mass norm is
\begin{align}\label{eq:restricted-nodal-norm}
\lVert z_h\rVert_{X_h,\sigma,E}^2
=\sum_{x_i\in E}m_i\sigma(x_i)^2|z_i|^2,
\end{align}
where the weights \(m_i\) are assembled on \(D\). The unweighted norms correspond to \(\sigma=1\).

\subsection{Weighted regularity, approximation, and testing}
We first verify the continuous analytic assumptions. Since the shifted problem \((-\Delta+\kappa s^2)U=g\) is coercive, it inherits the elliptic regularity of the Dirichlet Laplacian. The following lemma shows that the weight derivatives generated by \(\sigma=e^{\pm s\rho}\) can be controlled uniformly by the shift.
\begin{lemma}\label{lem:screen-regularity}
	The Dirichlet Laplacian satisfies \cref{ass:weighted,ass:regularity} uniformly in \(s\geq1\) and \(\sigma=e^{\pm s\rho}\), provided \(\kappa\) is sufficiently large.
\end{lemma}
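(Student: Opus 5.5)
We verify \cref{ass:weighted} and \cref{ass:regularity} for \(a(u,v)=\int_D\nabla u\cdot\nabla\bar v\,dx\) on \(V=H_0^1(D)\), with \(A=A^*=-\Delta_D\) and \(D(A)=H^2(D)\cap H_0^1(D)\). On the convex polytope \(D\) this domain characterization holds by standard \(H^2\)-regularity, so \(D(A),D(A^*)\subset H^2(D)\). The adjoint statements reduce to the primal ones with \(\rho\) replaced by \(-\rho\). It therefore suffices to treat a general weight \(\sigma=e^{\pm s\rho}\), using only \(|\nabla\log\sigma|\le Cs\) and \(|D^2\sigma|\le Cs^2\sigma\).

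\emph{Step 1: form bounds.} Multiplication by \(\theta^2\in W^{1,\infty}\) preserves \(H_0^1(D)\). Continuity \cref{eq:form-bounds} follows from Cauchy--Schwarz after writing \(\nabla u\cdot\nabla\bar v=(\theta\nabla u)\cdot(\theta^{-1}\nabla\bar v)\) and \(\kappa s^2u\bar v=\kappa(s\theta u)(s\theta^{-1}\bar v)\). For coercivity \cref{eq:form-coercivity}, set \(w=\theta v\). A direct expansion gives
\[
\operatorname{Re}a_s(v,\theta^2v)=\|\nabla w\|^2-\|(\nabla\log\theta)w\|^2+\kappa s^2\|w\|^2,
\]
because the cross terms combine to \(\operatorname{Re}\int(\nabla\log\theta)\cdot(\bar w\nabla w-w\nabla\bar w)\,dx=0\). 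Since \(|\nabla\log\theta|\le C_1s\), choosing \(\kappa\ge 2C_1^2\) gives a lower bound \(\|\nabla w\|^2+\tfrac{\kappa}{2}s^2\|w\|^2\). This quantity is equivalent to \(\mathcal N_{\theta,D}(v)^2\) because \(\theta\nabla v=\nabla w-(\nabla\log\theta)w\).

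\emph{Step 2: shifted weighted regularity.} Existence and uniqueness of \(U\in H_0^1\) follow from Lax--Milgram, since \(a_s\) is coercive for \(\kappa>0\). Then \(-\Delta U=g-\kappa s^2U\in L^2\), so \(U\in H^2\) by convexity. For the weighted bound, set \(W=\sigma U\in H^2\cap H_0^1\). It solves
\[
-\Delta W+\kappa s^2W=\sigma g-2\nabla\sigma\cdot\nabla U-(\Delta\sigma)U=:\tilde g .
\]
Testing the equation for \(U\) with \(\sigma^2U\) and using Step 1 (which holds for either sign) gives
\[
\mathcal N_{\sigma,D}(U)^2\le C\|\sigma g\|\,\|\sigma U\|\le Cs^{-1}\|\sigma g\|\,\mathcal N_{\sigma,D}(U),
\]
hence \(s\|\sigma\nabla U\|+s^2\|\sigma U\|\le C\|\sigma g\|\). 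Consequently \(\|\tilde g\|\le\|\sigma g\|+Cs\|\sigma\nabla U\|+Cs^2\|\sigma U\|\le C\|\sigma g\|\).

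Next we bound the Hessian. On a convex domain, for \(W\in H^2\cap H_0^1\) one has \(\|D^2W\|\le\|\Delta W\|\) (Grisvard). Moreover
\[
\|\Delta W\|^2+2\kappa s^2\|\nabla W\|^2+\kappa^2s^4\|W\|^2=\|\tilde g\|^2 ,
\]
which gives \(\|D^2W\|\le C\|\sigma g\|\). Finally \(\sigma D^2U=D^2W-(\text{terms with }\nabla\sigma\otimes\nabla U\text{ and }D^2\sigma\,U)\), and these terms are bounded by \(Cs\|\sigma\nabla U\|+Cs^2\|\sigma U\|\). This yields \(\mathcal R_{\sigma,D}(U)\le C\|\sigma g\|\), which is \cref{eq:screen-reg} and, with \(\sigma=\theta^{-1}\), \cref{eq:screen-reg-adjoint}.

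\emph{Main obstacle.} The delicate point is the coercivity identity in Step 1, where the cancellation of the imaginary cross term must be checked carefully for complex-valued \(v\) and the \(s^2\) loss absorbed by the shift. The second point needing care is the use of the convex-domain Hessian bound \(\|D^2W\|\le\|\Delta W\|\) with a constant independent of \(s\). The estimate must be applied to \(W=\sigma U\), which still vanishes on \(\partial D\), rather than directly to \(U\) with a weight.
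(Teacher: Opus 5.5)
Your proposal is correct and follows essentially the same route as the paper. Both arguments use the identity for \(\operatorname{Re}a_s(U,\sigma^2U)\) in terms of \(\sigma U\) with \(\kappa\) large relative to \(\lVert\nabla\rho\rVert_{L^\infty}^2\), the weighted energy bound from testing with \(\sigma^2U\), the convex-domain \(H^2\) estimate applied to \(\sigma U\in H^2(D)\cap H_0^1(D)\), the product rule, and self-adjointness for the adjoint weight.

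The only cosmetic difference is how you bound \(\lVert\Delta(\sigma U)\rVert\). You use the shifted identity \(\lVert(-\Delta+\kappa s^2)W\rVert^2=\lVert\Delta W\rVert^2+2\kappa s^2\lVert\nabla W\rVert^2+\kappa^2s^4\lVert W\rVert^2\), whereas the paper reads the bound off the conjugated equation by the triangle inequality.
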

\begin{proof}
	We verify the two assumptions sequentially.
	
	\emph{Weighted form bounds.}
	Let \(U\in H_0^1(D)\) and set \(z=\sigma U\). Expanding the shifted form gives
	\begin{align}\label{eq:screen-identity}
		\operatorname{Re} a_s(U,\sigma^2U)
		=\lVert \nabla z\rVert_{L^2(D)}^2
		+\int_D\bigl(\kappa s^2-|\nabla\log\sigma|^2\bigr)|z|^2\,dx.
	\end{align}
	Choosing \(\kappa>2\lVert \nabla\rho\rVert_{L^\infty(D)}^2+2\), the product rule and \cref{eq:screen-identity} yield the coercivity \cref{eq:form-coercivity} (with \(\theta\) replaced by \(\sigma\)). Since multiplication by \(\sigma^2\) preserves \(H_0^1(D)\), the boundedness \cref{eq:form-bounds} follows directly from the Cauchy--Schwarz inequality.
	
	\emph{Energy estimate.}
	Next, for \(g\in L^2(D)\), the Lax--Milgram theorem and convex-domain Dirichlet regularity give a unique solution \(U\in H^2(D)\cap H_0^1(D)\) to \((-\Delta+\kappa s^2)U=g\).
	Testing this equation with \(\sigma^2U\) and using \cref{eq:screen-identity}, we obtain the weighted energy bound
	\begin{align}\label{eq:screen-weak}
		s\lVert \nabla z\rVert_{L^2(D)}+s^2\lVert z\rVert_{L^2(D)}
		\leq C\lVert \sigma g\rVert_{L^2(D)}.
	\end{align}
	
	\emph{\(H^2\) regularity.}
	Conjugating the equation by \(\sigma\) yields
	\begin{align*}
		-\Delta z+2\nabla\log\sigma\cdot\nabla z
		+\bigl(\kappa s^2+\Delta\log\sigma-|\nabla\log\sigma|^2\bigr)z
		=\sigma g.
	\end{align*}
	Since \(\log\sigma=\pm s\rho\), the triangle inequality and \cref{eq:screen-weak} imply
	\begin{align*}
		\lVert\Delta z\rVert_{L^2(D)}
		\leq\lVert\sigma g\rVert_{L^2(D)}
		+Cs\lVert\nabla z\rVert_{L^2(D)}
		+C(s^2+s)\lVert z\rVert_{L^2(D)}
		\leq C\lVert\sigma g\rVert_{L^2(D)}.
	\end{align*}
	Applying the convex-domain Dirichlet estimate \cite{Grisvard2011} then gives
	\begin{align*}
		\lVert z\rVert_{H^2(D)}\leq C\lVert\Delta z\rVert_{L^2(D)}
		\leq C\lVert\sigma g\rVert_{L^2(D)}.
	\end{align*}
	
	\emph{Weighted regularity bound.}
	Finally, applying the product rule to \(U=\sigma^{-1}z\), and using \cref{eq:screen-weak} and the \(H^2\) estimate on \(z\), we bound the second-order terms:
	\begin{align*}
		\mathcal R_{\sigma,D}(U)
		\leq C\bigl(\lVert D^2z\rVert_{L^2(D)}
		+s\lVert\nabla z\rVert_{L^2(D)}+s^2\lVert z\rVert_{L^2(D)}\bigr)
		\leq C\lVert\sigma g\rVert_{L^2(D)}.
	\end{align*}
	Since the Dirichlet Laplacian is self-adjoint, this proves \cref{eq:screen-reg,eq:screen-reg-adjoint}.
\end{proof}

Next, we verify the discrete numerical assumptions (N2) and (N3). Local quasi-interpolation provides the weighted approximation bound for functions in the continuous domain, while nodal interpolation provides the test function required for the weighted coercivity estimate. Below, \(0<c_0\leq1\) denotes a fixed mesh threshold, chosen sufficiently small after the constants in estimates valid for \(hs\leq1\) have been determined.
\begin{lemma}\label{lem:interp}
	There exist constants \(c_0,C>0\), depending only on the fixed data, such that for \(hs\leq c_0\), every \(v\in H^2(D)\cap H_0^1(D)\) admits a boundary-preserving approximant \(\Pi_hv\in V_{h,0}(D)\) satisfying
	\begin{align}\label{eq:weighted-SZ}
		\mathcal{N}_{\sigma,D}(v-\Pi_hv)\leq Ch\mathcal{R}_{\sigma,D}(v).
	\end{align}
	Moreover, the nodal interpolant \(I_h\) satisfies
	\begin{align}\label{eq:weighted-product}
		\mathcal{N}_{\sigma^{-1},D}\bigl(I_h(\sigma^2z_h)-\sigma^2z_h\bigr)
		\leq Chs\mathcal{N}_{\sigma,D}(z_h),
		\qquad z_h\in V_{h,0}(D).
	\end{align}
\end{lemma}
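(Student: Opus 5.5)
The plan is to reduce both estimates to standard \emph{unweighted} local interpolation bounds, using the fact that for \(hs\leq c_0\) the weight is essentially constant on each element and on each element patch. For \(K\in\mathcal T_h(D)\), let \(\omega_K\) be the union of elements sharing a vertex with \(K\). Uniform shape regularity gives \(\operatorname{diam}\omega_K\leq Ch_K\leq Ch\), with no quasi-uniformity needed. Since \(|\nabla\log\sigma|=s|\nabla\rho|\leq Cs\), we get
\begin{align*}
\sup_{\omega_K}\sigma\big/\inf_{\omega_K}\sigma\leq e^{Csh_K}\leq e^{Cc_0}\leq C .
\end{align*}
So \(\sigma\) and \(\sigma^{-1}\) can be replaced on \(\omega_K\) by the frozen constants \(\sigma_K=\sigma(x_K)\) and \(\sigma_K^{-1}\), at the cost of a fixed multiplicative constant. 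The same applies to \(\sigma^{-1}\cdot\sigma^2\sim\sigma_K\). Local estimates on \(K\) or \(\omega_K\) are then summed using the bounded overlap of the patches.

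For \cref{eq:weighted-SZ}, I take \(\Pi_h\) to be the Scott--Zhang operator with boundary-face averaging on boundary vertices. Because \(v\in H_0^1(D)\), this choice makes \(\Pi_hv\in V_{h,0}(D)\). It satisfies, for \(m=0,1\),
\begin{align*}
|v-\Pi_hv|_{H^m(K)}\leq Ch_K^{2-m}|v|_{H^2(\omega_K)} .
\end{align*}
Multiplying by \(\sigma_K\) and using the local comparability, I obtain
\begin{align*}
\lVert\sigma\nabla(v-\Pi_hv)\rVert_{L^2(K)}\leq Ch\lVert\sigma D^2v\rVert_{L^2(\omega_K)},
\qquad
s\lVert\sigma(v-\Pi_hv)\rVert_{L^2(K)}\leq Ch\,(hs)\lVert\sigma D^2v\rVert_{L^2(\omega_K)} .
\end{align*}
Squaring, summing over \(K\), and using \(hs\leq c_0\leq1\) yields \(\mathcal N_{\sigma,D}(v-\Pi_hv)\leq Ch\lVert\sigma D^2v\rVert_{L^2(D)}\leq Ch\mathcal R_{\sigma,D}(v)\). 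This step uses only the Hessian part of \(\mathcal R\). For the \(L^2\) term it is important to use the second-order estimate rather than the first-order one: the latter would give \(s^2h\lVert\sigma\nabla v\rVert_{L^2(D)}\), which loses a factor \(s\) relative to \(\mathcal R\).

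For \cref{eq:weighted-product}, set \(w=\sigma^2z_h\). On each \(K\), \(w\) is smooth, and since \(z_h\) is affine on \(K\),
\begin{align*}
D^2w=(D^2\sigma^2)z_h+2\nabla\sigma^2\otimes\nabla z_h,
\qquad
|D^2\sigma^2|\leq Cs^2\sigma^2,\quad |\nabla\sigma^2|\leq Cs\sigma^2 .
\end{align*}
The nodal interpolation error on \(K\) is controlled by a Taylor argument using only the smoothness of \(w|_K\), so no Sobolev embedding restricts \(d\):
\begin{align*}
\lVert\nabla(I_hw-w)\rVert_{L^2(K)}+h_K^{-1}\lVert I_hw-w\rVert_{L^2(K)}\leq Ch_K|K|^{1/2}\lVert D^2w\rVert_{L^\infty(K)} .
\end{align*}
Combining with the inverse estimate \(\lVert z_h\rVert_{L^\infty(K)}\leq C|K|^{-1/2}\lVert z_h\rVert_{L^2(K)}\) and the fact that \(\nabla z_h\) is constant on \(K\), I bound \(|K|^{1/2}\lVert D^2w\rVert_{L^\infty(K)}\) by
\[
C\sigma_K^2\bigl(s^2\lVert z_h\rVert_{L^2(K)}+s\lVert\nabla z_h\rVert_{L^2(K)}\bigr).
\]
Multiplying by \(\sigma_K^{-1}\) and using local comparability gives
\[
\lVert\sigma^{-1}\nabla(I_hw-w)\rVert_{L^2(K)}\leq Chs\,\mathcal N_{\sigma,K}(z_h).
\]
The corresponding \(L^2\) term \(s\lVert\sigma^{-1}(I_hw-w)\rVert_{L^2(K)}\) carries an extra factor \(hs\leq c_0\). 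Both \(I_hw\) and \(w\) vanish on \(\partial D\), and summing over \(K\) proves \cref{eq:weighted-product}.

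The only delicate point is the bookkeeping on locally refined meshes. Everything must be genuinely elementwise or patchwise, with \(h_K\) controlled by the global \(h\) only at the very end, so that no constant depends on grading. In particular, the boundary-preserving Scott--Zhang averaging is taken over a face contained in \(\omega_K\), so the weight-freezing argument still applies there. I expect this to be routine once the exponential comparability \(e^{Csh_K}\leq C\) is in place. The threshold \(c_0\) is chosen only to make that constant, and the factor \(hs\) in the \(L^2\) terms, bounded.
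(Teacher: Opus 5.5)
Your proposal is correct and follows essentially the same route as the paper. For the first estimate, both use the boundary-preserving Scott--Zhang operator with the weight frozen on each patch \(\omega_K\) via \(e^{Csh_K}\le C\). For the second, both use an elementwise Taylor argument for \(I_h(\sigma^2z_h)\), together with norm equivalence for affine functions and a final summation in which the \(L^2\) part carries an extra factor \(hs\).

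Using the seminorm \(|v|_{H^2(\omega_K)}\) instead of the paper's full \(H^2(\omega_K)\) norm is a harmless refinement. Your aside on this point is off, though. The first-order estimate would give \(sh\lVert\sigma\nabla v\rVert\), which is already bounded by \(h\mathcal R_{\sigma,D}(v)\), so no factor of \(s\) would be lost.
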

\begin{proof}
	We prove the two estimates by standard finite element interpolation techniques.
	
	\emph{Step 1: Weighted approximation.}
	For \(d\geq2\), let \(\Pi_h\) be the boundary-preserving Scott--Zhang quasi-interpolant \cite{SZ}. 
	Set \(e=v-\Pi_hv\). For each element \(K\), let \(\omega_K\) be the union of all elements sharing a vertex with \(K\). Standard local approximation gives
	\begin{align*}
		\lVert e\rVert_{L^2(K)}+h_K\lVert\nabla e\rVert_{L^2(K)}
		\leq Ch_K^2\lVert v\rVert_{H^2(\omega_K)}.
	\end{align*}
	Conformity and shape regularity guarantee bounded patch overlap, and ensure the diameters of elements within each patch are comparable.
	To control the weighted norms, we freeze the weight at a point \(x_K\in K\). The bound \(|\nabla\log\sigma|\leq Cs\) and \(hs\leq1\) imply that the weight is uniformly comparable on \(\omega_K\):
	\begin{align}\label{eq:patch-weight-comparison}
		C^{-1}\leq\frac{\sigma(x)}{\sigma(x_K)}\leq C,
		\qquad x\in\omega_K.
	\end{align}
	Using the local estimate and \(s\geq1\), we obtain
	\begin{align*}
		\lVert\sigma e\rVert_{L^2(K)}+h_K\lVert\sigma\nabla e\rVert_{L^2(K)}
		&\leq C\sigma(x_K)\bigl(\lVert e\rVert_{L^2(K)}+h_K\lVert\nabla e\rVert_{L^2(K)}\bigr)\\
		&\leq Ch_K^2\sigma(x_K)\lVert v\rVert_{H^2(\omega_K)}
		\leq Ch_K^2\mathcal R_{\sigma,\omega_K}(v).
	\end{align*}
	Summing over all elements, and using the bounded patch overlap, \(h_K\leq h\), and \(hs\leq1\), we obtain
	\begin{align*}
		\mathcal N_{\sigma,D}(e)^2
		&=\sum_{K\in\mathcal T_h(D)}
		\bigl(\lVert\sigma\nabla e\rVert_{L^2(K)}^2+s^2\lVert\sigma e\rVert_{L^2(K)}^2\bigr)\\
		&\leq C\sum_{K\in\mathcal T_h(D)}h_K^2(1+s^2h_K^2)\mathcal R_{\sigma,\omega_K}(v)^2
		\leq Ch^2\mathcal R_{\sigma,D}(v)^2.
	\end{align*}
	Taking square roots proves \cref{eq:weighted-SZ}. In one dimension, the same calculation applies with \(K\) in place of \(\omega_K\).
	
	\emph{Step 2: Local interpolation error.}
	We now establish the second estimate, which concerns the interpolation of the product \(\sigma^2z_h\). 
	Set \(F=\sigma^2z_h\) and fix an element \(K\). Since \(z_h\) is affine on \(K\), we have \(D^2z_h=0\), and thus
	\begin{align}\label{eq:weighted-product-hessian}
		D^2F=z_hD^2(\sigma^2)
		+\nabla(\sigma^2)\otimes\nabla z_h
		+\nabla z_h\otimes\nabla(\sigma^2).
	\end{align}
	Since \(\sigma^2=e^{\pm2s\rho}\) and \(s\geq1\), the derivatives of the weight satisfy
	\begin{align*}
		|\nabla(\sigma^2)|+s^{-1}|D^2(\sigma^2)|\leq Cs\sigma^2.
	\end{align*}
	To bound the interpolation error, let \(p_K(x)=F(x_K)+\nabla F(x_K)\cdot(x-x_K)\) be the first-order Taylor polynomial at \(x_K\), and set \(r_K=F-p_K\). Taylor's theorem gives
	\begin{align}\label{eq:product-taylor-remainder}
		\lVert r_K\rVert_{L^\infty(K)}+h_K\lVert\nabla r_K\rVert_{L^\infty(K)}
		\leq Ch_K^2\lVert D^2F\rVert_{L^\infty(K)}.
	\end{align}
	Let \(x_i\) be the vertices of \(K\) and \(\lambda_i\) their barycentric coordinates. Since \(I_hp_K=p_K\), the interpolation error is
	\begin{align*}
		(I_hF-F)|_K=\sum_{x_i\in K}r_K(x_i)\lambda_i-r_K.
	\end{align*}
	Using \(|\nabla\lambda_i|\leq Ch_K^{-1}\) together with \cref{eq:product-taylor-remainder}, we infer
	\begin{align*}
		\lVert I_hF-F\rVert_{L^\infty(K)}
		+h_K\lVert\nabla(I_hF-F)\rVert_{L^\infty(K)}
		\leq Ch_K^2\lVert D^2F\rVert_{L^\infty(K)}.
	\end{align*}
	Next, we convert this \(L^\infty\) bound into a weighted \(L^2\) bound. By \cref{eq:patch-weight-comparison,eq:weighted-product-hessian} and norm equivalence for affine functions on \(K\),
	\begin{align*}
		|K|^{1/2}\lVert\sigma^{-1}\rVert_{L^\infty(K)}\lVert D^2F\rVert_{L^\infty(K)}
		&\leq C|K|^{1/2}\sigma(x_K)
		\bigl(s\lVert\nabla z_h\rVert_{L^\infty(K)}+s^2\lVert z_h\rVert_{L^\infty(K)}\bigr)\\
		&\leq C\bigl(s\lVert\sigma\nabla z_h\rVert_{L^2(K)}
		+s^2\lVert\sigma z_h\rVert_{L^2(K)}\bigr).
	\end{align*}
	Setting \(\eta_h=I_h(\sigma^2z_h)-\sigma^2z_h\) and combining the above bounds yields
	\begin{align}\label{eq:product-local}
		\lVert \sigma^{-1}\eta_h\rVert_{L^2(K)}
		+h_K\lVert \sigma^{-1}\nabla\eta_h\rVert_{L^2(K)}
		\leq Ch_K^2\bigl(s\lVert \sigma\nabla z_h\rVert_{L^2(K)}
		+s^2\lVert \sigma z_h\rVert_{L^2(K)}\bigr).
	\end{align}
	
	\emph{Step 3: Global weighted testing estimate.}
	Finally, we sum the local estimate \cref{eq:product-local} over all elements. Using \(sh_K\leq1\), we have
	\begin{align*}
		\mathcal N_{\sigma^{-1},K}(\eta_h)^2
		\leq Ch_K^2s^2(1+s^2h_K^2)\mathcal N_{\sigma,K}(z_h)^2
		\leq Ch_K^2s^2\mathcal N_{\sigma,K}(z_h)^2.
	\end{align*}
	Summing over the mesh gives
	\begin{align*}
		\mathcal N_{\sigma^{-1},D}(\eta_h)^2
		=\sum_{K\in\mathcal T_h(D)}\mathcal N_{\sigma^{-1},K}(\eta_h)^2
		\leq Cs^2\sum_{K\in\mathcal T_h(D)}h_K^2\mathcal N_{\sigma,K}(z_h)^2
		\leq Ch^2s^2\mathcal N_{\sigma,D}(z_h)^2.
	\end{align*}
	Taking square roots proves \cref{eq:weighted-product}.
\end{proof}

Thus \cref{Papprox} holds with \(\alpha(h,s)=\alpha^*(h,s)=Ch\). Choosing \(w_h=I_h(\theta^2v_h)\) verifies \cref{Ptest} after decreasing \(c_0\) so that the factor \(Chs\) in \cref{eq:weighted-product} is at most the ratio in \cref{eq:testsuper}. The same argument applies with \(\theta\) replaced by \(\theta^{-1}\).

\subsection{Load lifting and weighted mass equivalence}

We now verify the stable load lifting assumption (N1) and establish a duality estimate that will be crucial for local weighted solvability and CGO construction later. We first note that local polynomial norm equivalence and local weight comparability give
\begin{align}\label{eq:weighted-mass-equivalence}
\lVert z_h\rVert_{X_h,\sigma}\asymp\lVert \sigma z_h\rVert_{L^2(D)},
\qquad z_h\in X_h,\quad hs\leq c_0.
\end{align}
For consistent mass, take \(\mathcal{L}_hf_h=f_h\) for \(f_h\in X_h\).
For lumped mass, an elementwise biorthogonal basis represents the nodal pairing. Define the broken affine lift by
\begin{align}\label{eq:broken-lift}
(\mathcal{L}_hf_h)|_K
=\sum_{x_i\in K}f_i\bigl((d+2)\lambda_i-1\bigr),
\end{align}
where the sum is over the vertices of \(K\), \(\lambda_i\) are its barycentric coordinates, and \(f_i=0\) at boundary vertices of \(D\).
The identity
\begin{align*}
\int_K\bigl((d+2)\lambda_i-1\bigr)\lambda_j\,dx
=\frac{|K|}{d+1}\delta_{ij}
\end{align*}
proves \cref{eq:load-lifting} after summation over the mesh.
Polynomial norm equivalence and weight comparability on each element give \cref{eq:load-stability}, completing the verification of \cref{ass:loads}.

The following reciprocal-weight duality estimate will be used to control the potential term and to construct a weighted right inverse in \cref{sec:cgo}.
\begin{lemma}\label{lem:mass-duality}
	For consistent or lumped mass and \(hs\leq c_0\) (with \(c_0>0\) sufficiently small), we have
	\begin{align}\label{eq:mass-dual}
		\lVert z_h\rVert_{X_h,\sigma}\asymp
		\sup_{0\ne v_h\in X_h}
		\frac{|(z_h,v_h)_{X_h}|}{\lVert v_h\rVert_{X_h,\sigma^{-1}}},\qquad z_h\in X_h.
	\end{align}
	The potential operator \(C_h(q):X_h\to X_h\), defined by \((C_h(q)z_h,v_h)_{X_h}=c_h(q;z_h,v_h)\), satisfies
	\begin{align}\label{eq:potential-weighted}
		\lVert C_h(q)z_h\rVert_{X_h,\sigma}\leq CM\lVert z_h\rVert_{X_h,\sigma}.
	\end{align}
\end{lemma}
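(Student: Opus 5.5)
The duality \cref{eq:mass-dual} splits into two inequalities, and \cref{eq:potential-weighted} then follows by testing against \(v_h\). Throughout I work elementwise and use the weight comparability \cref{eq:patch-weight-comparison}, which holds on patches when \(hs\leq c_0\).

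\emph{Upper bound.} The inequality ``\(\gtrsim\)'' in \cref{eq:mass-dual} is the easy direction. For lumped mass the pairing is \(\sum_i m_i z_i\bar v_i=\sum_i m_i\,(\sigma(x_i)z_i)\,\overline{(\sigma(x_i)^{-1}v_i)}\). Cauchy--Schwarz then gives \(|(z_h,v_h)_{X_h}|\leq\lVert z_h\rVert_{X_h,\sigma}\lVert v_h\rVert_{X_h,\sigma^{-1}}\) directly. For consistent mass I insert \(\sigma\sigma^{-1}\) in the integral. This gives \(|(z_h,v_h)|\leq\lVert\sigma z_h\rVert_{L^2}\lVert\sigma^{-1}v_h\rVert_{L^2}\), which is an identity of norms in that case.

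\emph{Lower bound.} For ``\(\lesssim\)'' I choose a test function that realizes the supremum up to constants, namely \(v_h=I_h(\sigma^2 z_h)\). In the lumped case its nodal values are exactly \(\sigma(x_i)^2z_i\). Hence \((z_h,v_h)_{X_h}=\lVert z_h\rVert_{X_h,\sigma}^2\) and \(\lVert v_h\rVert_{X_h,\sigma^{-1}}=\lVert z_h\rVert_{X_h,\sigma}\), so the ratio is exact. In the consistent case I write \(v_h=\sigma^2z_h+\eta_h\). \Cref{eq:product-local} (only the \(L^2\) part is needed) gives \(\lVert\sigma^{-1}\eta_h\rVert_{L^2(K)}\leq Ch_K^2 s^2\lVert\sigma z_h\rVert_{L^2(K)}\). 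Here I have used the inverse inequality \(\lVert\nabla z_h\rVert_{L^2(K)}\leq Ch_K^{-1}\lVert z_h\rVert_{L^2(K)}\) together with weight comparability on \(K\), so that \(h_K^2 s\lVert\sigma\nabla z_h\rVert\leq Ch_Ks\lVert\sigma z_h\rVert\leq C c_0\lVert \sigma z_h\rVert\). Summing over \(K\) gives \(\lVert\sigma^{-1}\eta_h\rVert_{L^2(D)}\leq Cc_0\lVert\sigma z_h\rVert_{L^2(D)}\). Therefore
\begin{align*}
|(z_h,v_h)|\geq\lVert\sigma z_h\rVert^2-Cc_0\lVert\sigma z_h\rVert^2\geq\tfrac12\lVert\sigma z_h\rVert^2,
\end{align*}
and \(\lVert\sigma^{-1}v_h\rVert\leq(1+Cc_0)\lVert\sigma z_h\rVert\), once \(c_0\) is decreased. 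Combining these with the mass equivalence \cref{eq:weighted-mass-equivalence} for \(\sigma\) and for \(\sigma^{-1}\) closes the lower bound. The only delicate point is this smallness of \(c_0\), and I expect it to be the main obstacle. It is handled by the elementwise \(h_Ks\) factor and needs no quasi-uniformity.

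\emph{Potential operator.} For \cref{eq:potential-weighted} I apply the duality just proved to \(C_h(q)z_h\). For any \(v_h\), I bound \(|c_h(q;z_h,v_h)|\). In the lumped case this is \(\leq M\sum_i m_i|z_i||v_i|\leq M\lVert z_h\rVert_{X_h,\sigma}\lVert v_h\rVert_{X_h,\sigma^{-1}}\), using the same Cauchy--Schwarz splitting with \(\sigma(x_i)^{\pm1}\). In the consistent case I use \(|q|\leq M\) and \(\int|q z_h v_h|\leq M\lVert\sigma z_h\rVert\lVert\sigma^{-1}v_h\rVert\), followed by \cref{eq:weighted-mass-equivalence}. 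Taking the supremum over \(v_h\) and using the ``\(\lesssim\)'' direction of \cref{eq:mass-dual} yields \(\lVert C_h(q)z_h\rVert_{X_h,\sigma}\leq CM\lVert z_h\rVert_{X_h,\sigma}\).
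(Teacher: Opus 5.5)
Your proposal is correct and follows the paper's proof essentially step for step: weighted Cauchy--Schwarz for the easy direction, the exact nodal choice $v_i=\sigma(x_i)^2z_i$ for lumped mass, $v_h=I_h(\sigma^2z_h)$ with \cref{eq:product-local} and a local inverse inequality for consistent mass, and then duality applied to $C_h(q)z_h$. One small slip: the elementwise bound should read $\lVert\sigma^{-1}\eta_h\rVert_{L^2(K)}\leq C(h_Ks+h_K^2s^2)\lVert\sigma z_h\rVert_{L^2(K)}$, not $Ch_K^2s^2\lVert\sigma z_h\rVert_{L^2(K)}$, because the gradient term contributes $h_Ks$; your next clause already handles this correctly, and the summed bound $Cc_0\lVert\sigma z_h\rVert_{L^2(D)}$ still holds.
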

\begin{proof}
The weighted Cauchy--Schwarz inequality bounds the supremum in \cref{eq:mass-dual} by \(\lVert z_h\rVert_{X_h,\sigma}\).
For lumped mass, equality is attained by \(v_i=\sigma(x_i)^2z_i\).
For consistent mass, choose \(v_h=I_h(\sigma^2z_h)\).
Using \cref{eq:product-local} and the local inverse inequality
\(h_K\lVert \nabla z_h\rVert_{L^2(K)}\leq C\lVert z_h\rVert_{L^2(K)}\), we obtain
\begin{align*}
	\lVert \sigma^{-1}(v_h-\sigma^2z_h)\rVert_{L^2(D)}
	\leq C\bigl(hs+(hs)^2\bigr)\lVert \sigma z_h\rVert_{L^2(D)}.
\end{align*}
Consequently,
\begin{align*}
	\operatorname{Re}(z_h,v_h)_{X_h}\geq(1-Chs)\lVert \sigma z_h\rVert_{L^2(D)}^2.
\end{align*}
The same estimate and the triangle inequality give
\begin{align*}
	\lVert \sigma^{-1}v_h\rVert_{L^2(D)}\leq C\lVert \sigma z_h\rVert_{L^2(D)}.
\end{align*}
Choose \(c_0\) so that \(Chs\leq1/2\). These bounds show that \(\lVert z_h\rVert_{X_h,\sigma}\) is bounded by a constant times the supremum in \cref{eq:mass-dual}, proving the reverse inequality.
Finally, \cref{eq:consistent-reaction,eq:lumped-reaction} give
\(|c_h(q;z_h,v_h)|\leq M\lVert z_h\rVert_{X_h,\sigma}\lVert v_h\rVert_{X_h,\sigma^{-1}}\);
apply \cref{eq:mass-dual} to \(C_h(q)z_h\).
\end{proof}

\subsection{Cartesian finite differences}\label{sec:cartesian}
Finally, we observe that standard Cartesian centered finite differences fit directly into this finite-element framework. Let \(D\) be a box with mutually comparable uniform coordinate spacings \(h_j\).
Index its grid vertices by multi-indices \(i\in\mathbb{Z}^d\), and let \(e_j\) denote the \(j\)th coordinate vector, so that \(x_{i\pm e_j}=x_i\pm h_je_j\). Write \(z_i=z_h(x_i)\).
Triangulate each grid box by the Kuhn simplices obtained from the coordinate permutations.
For lumped mass, an interior node has \(m_i=\prod\limits_{j=1}^dh_j\), and the resulting \(P_1\) operator is the centered finite-difference Laplacian:
\begin{align}\label{eq:cartesian-laplacian}
(A_hz_h)_i=\sum_{j=1}^d\frac{2z_i-z_{i+e_j}-z_{i-e_j}}{h_j^2}.
\end{align}
Indeed, on each simplex the coordinate derivatives of an affine function are its successive coordinate-edge differences divided by the corresponding spacings.
Summing the simplex energies gives coefficient \((\prod\limits_{k=1}^dh_k)/h_j^2\) on each interior coordinate edge in direction \(e_j\) and zero on noncoordinate edges.
Dividing these stiffness coefficients by \(m_i\) gives \cref{eq:cartesian-laplacian}; \cref{eq:lumped-reaction} gives \((C_h(q)z_h)(x_i)=q_iz_i\).
The maximum simplex diameter is comparable to \(\max_jh_j\), so \cref{eq:weighted-SZ,eq:weighted-product,eq:load-stability} apply with \(h\asymp\max_jh_j\).

The preceding estimates verify the load, approximation, and testing assumptions for both finite-element mass forms and for Cartesian differences.
\begin{proposition}\label{prop:transfer-realizations}
	For the finite-element spaces and weights in \cref{sec:mass}, there exist constants \(c_0,C>0\), depending only on the fixed data, such that the assumptions \cref{ass:loads,Papprox,Ptest} hold for both \(\theta\) and \(\theta^{-1}\) with \(\alpha(h,s),\alpha^*(h,s)\leq Ch\), whenever \(hs\leq c_0\) and consistent or lumped mass is used. The same holds for the Cartesian operator in \cref{eq:cartesian-laplacian}.
\end{proposition}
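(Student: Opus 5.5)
The proposition essentially collects the estimates already proved in this section, so the plan is to check each assumption against the corresponding lemma. The threshold \(c_0\) is fixed once, as the minimum of the thresholds appearing in \cref{lem:interp}, in \cref{eq:weighted-mass-equivalence}, and in \cref{lem:mass-duality}. Every step below is carried out for a general \(\sigma\in\{\theta,\theta^{-1}\}\) with \(\sigma=e^{\pm s\rho}\), since every bound so far used only \(|\nabla\log\sigma|\le Cs\) and \(|D^2\log\sigma|\le Cs\). This gives the claims for both \(\theta\) and \(\theta^{-1}\) simultaneously.

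\emph{Approximation \cref{Papprox}.} By \cref{lem:screen-regularity}, \(D(A)=D(A^*)=H^2(D)\cap H_0^1(D)\) because the Dirichlet Laplacian is self-adjoint on a convex polytope. For \(v\) in this domain, take \(v_h=\Pi_hv\in V_{h,0}(D)=V_h\). Then \cref{eq:weighted-SZ} with \(\sigma=\theta\) gives \cref{eq:approximation} with \(\alpha\le Ch\), and with \(\sigma=\theta^{-1}\) it gives \cref{eq:approximation-adjoint} with \(\alpha^*\le Ch\).

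\emph{Testing \cref{Ptest}.} Given \(v_h\in V_h\), set \(w_h=I_h(\theta^2v_h)\in V_{h,0}(D)\); nodal interpolation preserves the zero boundary values. Then \cref{eq:weighted-product} gives \(\mathcal N_{\theta^{-1},D}(w_h-\theta^2v_h)\le Chs\,\mathcal N_{\theta,D}(v_h)\). The constants \(c,C\) in \cref{eq:form-bounds,eq:form-coercivity} are fixed by \cref{lem:screen-regularity} independently of \(h\). So after shrinking \(c_0\) we get \(Chs\le c/(2C)\), which is \cref{eq:testsuper}. The same argument with \(\theta^{-1}\) in place of \(\theta\) gives the reciprocal version.

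\emph{Load lifting \cref{ass:loads}.} For consistent mass, \(\mathcal L_h f=f\), so \cref{eq:load-lifting} is the definition of \((\cdot,\cdot)_{X_h}\), and \cref{eq:load-stability} is an equality. For lumped mass, \cref{eq:load-lifting} follows from the biorthogonality identity for \((d+2)\lambda_i-1\) summed over the elements containing \(x_i\); this produces exactly \(m_i\) because \(m_i\) is assembled from \(|K|/(d+1)\). For \cref{eq:load-stability}, on each \(K\) the weight comparability \cref{eq:patch-weight-comparison} (valid for \(hs\le1\)) and the fact that the functions \((d+2)\lambda_i-1\) are affine with \(L^2(K)\)-norm \(\lesssim|K|^{1/2}\) give
\begin{align*}
\lVert\sigma\mathcal L_hf\rVert_{L^2(K)}^2\le C\sum_{x_i\in K}|K|\,\sigma(x_i)^2|f_i|^2 .
\end{align*}
Summing over \(K\) and using \(|K|\le(d+1)m_i\) yields \(\lVert\sigma\mathcal L_hf\rVert_{L^2(D)}\le C\lVert f\rVert_{X_h,\sigma}\).

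\emph{Finite differences.} The Cartesian scheme is the lumped-mass \(P_1\) method on the Kuhn triangulation, as verified in \cref{sec:cartesian}. That triangulation is conforming and uniformly shape-regular because the spacings \(h_j\) are mutually comparable, and its mesh size satisfies \(h\asymp\max_jh_j\). All the preceding items therefore apply verbatim, with \(c_0\) adjusted by the comparability constant.

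The only delicate point is the ordering of constants. The testing threshold must be chosen after \(\kappa\), and hence after the constants \(c,C\) in \cref{eq:form-bounds,eq:form-coercivity}, have been fixed. This is possible because \cref{lem:screen-regularity} fixes \(\kappa\) independently of \(h\) and \(s\), so no circularity arises.
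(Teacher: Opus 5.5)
Your proposal is correct and follows the same route as the paper. It checks (N1) via the identity lift for consistent mass and the broken affine biorthogonal lift for lumped mass, (N2) via the weighted Scott--Zhang bound of Lemma~\ref{lem:interp} for $\sigma=\theta^{\pm1}$, (N3) via $w_h=I_h(\theta^{2}v_h)$ with $c_0$ shrunk after $\kappa$ and the constants of (A1) are fixed, and the Cartesian case through its Kuhn-simplex lumped-mass realization. Your elementwise computation for the lumped-mass load stability and your remark on the ordering of constants fill in details the paper leaves implicit.
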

\begin{proof}
	We verify the three numerical assumptions in order.
	
	First, \cref{ass:loads} holds by the constructions in the preceding subsection (specifically the identity for consistent mass and the broken affine lift for lumped mass), while \cref{eq:stiffnessinterface} follows directly from the definition of \(A_h\). 
	
	Next, \cref{Papprox} is a direct consequence of \cref{lem:interp}, which provides the bounds with \(\alpha(h,s),\alpha^*(h,s)\leq Ch\). 
	
	Finally, to verify the weighted testing assumption \cref{Ptest}, we recall the bound \cref{eq:weighted-product} from \cref{lem:interp}. By choosing the mesh threshold \(c_0>0\) sufficiently small, the factor \(Chs\) is controlled so that \cref{eq:testsuper} is satisfied.
	
	The same arguments apply to the Cartesian operator via its lumped-mass finite-element realization in \cref{eq:cartesian-laplacian}.
\end{proof}

\section{Transferred Carleman estimates}\label{sec:weights}
Having verified the numerical assumptions in \cref{prop:transfer-realizations}, we now apply \cref{thm:transfer,cor:budgets} to two specific continuous Carleman estimates. Each choice of the weight $\theta$ serves a distinct purpose in the subsequent inverse problem analysis:
\begin{itemize}
\item \emph{Linear weights} use $\gamma=0$ and yield the parameter range \(s\leq ch^{-2/3}\). They provide a coercive estimate that serves as the foundation for constructing exact discrete CGO solutions in \cref{sec:cgo}.
\item \emph{Radial weights} use $\gamma=1$ and yield the slightly wider range \(s\leq ch^{-4/5}\). They yield quantitative continuation estimates (three-ball inequalities) that we use to propagate smallness across the domain in \cref{sec:local}.
\end{itemize}
Throughout this section, let $q$ be a bounded complex potential as in \cref{eq:consistent-reaction} for consistent mass, or a bounded nodal potential as in \cref{eq:lumped-reaction} for lumped mass. We define the discrete Schr\"odinger operator $A_{q,h}=A_h+C_h(q)$ on $X_h=V_{h,0}(D)$, where \(A_h\) is the discrete Laplacian and \(C_h(q)\) is the potential operator. Its adjoint with respect to $(\cdot,\cdot)_{X_h}$ is $A_{q,h}^*=A_{\bar q,h}$.

\subsection{Linear weights and local coercivity}
We begin with the linear weight $\theta=e^{s\mathbf{a}\cdot x}$, where $|\mathbf{a}|=1$. Our goal is to control the weighted energy of a state by its equation residual, assuming the observation is made in an outer collar disjoint from the region of interest. This will later provide the solvability estimate for the CGO construction. The continuous estimate is standard and relies on the commutation of the conjugated Laplacian with its symmetric and skew-symmetric parts.
\begin{lemma}\label{lem:linear-cont}
There are an outer collar $O\subset D$, with $\overline O\cap\overline U=\varnothing$, and a constant $C>0$ such that
\begin{align}\label{eq:linear-collar}
	\mathcal{N}_{\theta,D}(v)^2
	\leq C\bigl(\lVert \theta\Delta v\rVert_{L^2(D)}^2
	+\mathcal{N}_{\theta,O}(v)^2\bigr),
	\qquad v\in H^2(D)\cap H_0^1(D),
\end{align}
uniformly for $s\geq1$ and $|\mathbf{a}|=1$.
\end{lemma}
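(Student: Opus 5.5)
The plan is to conjugate by the linear weight and exploit the fact that for $\theta=e^{s\mathbf a\cdot x}$ the conjugated Laplacian has constant coefficients. Its symmetric and skew-symmetric parts therefore commute, and the cross term vanishes exactly for compactly supported functions. The only difficulty is the boundary term coming from the Dirichlet boundary, which has no favourable sign. I will remove it with a cutoff supported away from $\partial D$ and pay for the cutoff with the collar observation. Concretely, I take $O$ to be a thin collar $\{x\in D:\operatorname{dist}(x,\partial D)<\varepsilon\}$, with $\varepsilon$ small enough that $\overline O\cap\overline U=\varnothing$; this is possible because $U\Subset D$ is the fixed region of interest. I then fix $\chi\in C_c^\infty(D)$ with $\chi=1$ on $D\setminus O$, so that $\operatorname{supp}\nabla\chi\subset O$. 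The constants will depend only on $\varepsilon$, $\lVert\chi\rVert_{C^2}$ and $\operatorname{diam}D$, and in particular not on $\mathbf a$.

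\emph{Step 1: the conjugated identity.} For $v\in H^2(D)\cap H_0^1(D)$, set $w=\theta v$ and $\tilde w=\chi w\in H_0^2(D)$. A direct computation gives
\begin{align*}
\theta\Delta v = P_s w := \Delta w-2s\,\mathbf a\cdot\nabla w+s^2w .
\end{align*}
Split $P_s=S+A$ with $S=\Delta+s^2$ and $A=-2s\,\mathbf a\cdot\nabla$. Because $\tilde w$ is compactly supported and all coefficients are constant, integration by parts yields $\operatorname{Re}(S\tilde w,A\tilde w)=0$, and hence
\begin{align*}
\lVert P_s\tilde w\rVert^2=\lVert S\tilde w\rVert^2+4s^2\lVert \mathbf a\cdot\nabla\tilde w\rVert^2 .
\end{align*}

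\emph{Step 2: recovering the energy.} A one-dimensional Poincaré inequality along lines parallel to $\mathbf a$ gives $\lVert\tilde w\rVert\le \operatorname{diam}(D)\lVert\mathbf a\cdot\nabla\tilde w\rVert$, uniformly in $|\mathbf a|=1$. Therefore $s^2\lVert\tilde w\rVert^2\le C\lVert P_s\tilde w\rVert^2$. For the gradient I use
\begin{align*}
\lVert\nabla\tilde w\rVert^2=s^2\lVert\tilde w\rVert^2-\operatorname{Re}(S\tilde w,\tilde w)\le s^2\lVert\tilde w\rVert^2+\lVert S\tilde w\rVert\,\lVert\tilde w\rVert .
\end{align*}
Combined with the previous bound, this gives $\lVert\nabla\tilde w\rVert^2+s^2\lVert\tilde w\rVert^2\le C\lVert P_s\tilde w\rVert^2$.

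\emph{Step 3: the commutator and returning to $v$.} Write $P_s\tilde w=\chi P_sw+[P_s,\chi]w$. The commutator equals $2\nabla\chi\cdot\nabla w+(\Delta\chi)w-2s(\mathbf a\cdot\nabla\chi)w$. It is supported in $O$ and bounded pointwise by $C(|\nabla w|+s|w|)$. Since $|\nabla\log\theta|=s$, we have $\nabla w=\theta\nabla v+s\mathbf a\,\theta v$. Hence $\lVert\nabla w\rVert_{L^2(G)}+s\lVert w\rVert_{L^2(G)}\asymp\mathcal N_{\theta,G}(v)$ on any $G$, and consequently $\lVert[P_s,\chi]w\rVert^2\le C\mathcal N_{\theta,O}(v)^2$. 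On $D\setminus O$ we have $\tilde w=w$, so $\mathcal N_{\theta,D\setminus O}(v)^2\le C\bigl(\lVert\theta\Delta v\rVert^2+\mathcal N_{\theta,O}(v)^2\bigr)$. Adding the trivial bound for $\mathcal N_{\theta,O}(v)^2$ gives \cref{eq:linear-collar}.

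The main point to check is the vanishing of the cross term in Step 1. With $\tilde w\in H^2_0$, the products $(\Delta\tilde w,\mathbf a\cdot\nabla\tilde w)$ and $(\tilde w,\mathbf a\cdot\nabla\tilde w)$ are purely imaginary after integration by parts with no boundary contributions. I would verify this first for smooth $\tilde w$ and then extend to $H^2_0$ by density. Everything else is a routine commutator estimate. Note that no largeness of $s$ is needed, consistent with the uniformity for $s\ge1$ stated in the lemma.
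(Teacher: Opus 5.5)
Your proposal is correct and follows the paper's proof essentially step for step: conjugation by the linear weight, the exact splitting of $\lVert P_s\tilde w\rVert^2$ from the commuting symmetric and skew-symmetric parts, Poincar\'e along lines parallel to $\mathbf a$, recovery of the gradient from $\operatorname{Re}(S\tilde w,\tilde w)$, and a cutoff whose commutator and leftover are paid for by the collar observation. The only difference is cosmetic: you work directly with $\chi\theta v\in H_0^2(D)$ and a density argument, whereas the paper first proves the compact-support estimate for $C_c^\infty(D)$ and then applies it to $\chi v$.
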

\begin{proof}
	We proceed in two steps: first establishing the estimate for compactly supported smooth functions, and then extending it to \(H^2(D)\cap H_0^1(D)\) via a cutoff argument.
	
	\emph{Step 1: Compact support estimate.}
	For $v\in C_c^\infty(D)$, extend $z=\theta v$ by zero to \(\mathbb{R}^d\). Because the symmetric and skew-symmetric parts of \(P_s=\theta(-\Delta)\theta^{-1}=-\Delta-s^2+2s\mathbf{a}\cdot\nabla\) commute, we have
	\begin{align*}
		\lVert P_sz\rVert_{L^2(\mathbb{R}^d)}^2
		=\lVert (-\Delta-s^2)z\rVert_{L^2(\mathbb{R}^d)}^2
		+4s^2\lVert \mathbf{a}\cdot\nabla z\rVert_{L^2(\mathbb{R}^d)}^2.
	\end{align*}
	Poincar\'e's inequality along lines parallel to $\mathbf{a}$ yields $s\lVert z\rVert_{L^2(\mathbb{R}^d)}\leq C\lVert P_sz\rVert_{L^2(\mathbb{R}^d)}$, uniformly in $\mathbf{a}$, since each intersection with \(D\) has length at most \(\operatorname{diam}D\).
	Combining this with the identity $\lVert\nabla z\rVert_{L^2(\mathbb{R}^d)}^2=\operatorname{Re}(P_sz,z)_{L^2(\mathbb{R}^d)}+s^2\lVert z\rVert_{L^2(\mathbb{R}^d)}^2$ and the product rule, we obtain
	\begin{align}\label{eq:linear-compact}
		\mathcal{N}_{\theta,D}(v)^2\leq C\lVert\theta\Delta v\rVert_{L^2(D)}^2,
		\qquad v\in C_c^\infty(D).
	\end{align}
	
	\emph{Step 2: Extension via cutoff.}
	Choose $\chi\in C_c^\infty(D)$ equal to one near $\overline U$, and an outer collar $O$ containing $D\cap\operatorname{supp}(1-\chi)$ with $\overline O\cap\overline U=\varnothing$.
	By density, \cref{eq:linear-compact} applies to $\chi v$ for any $v\in H^2(D)\cap H_0^1(D)$. 
	The remaining error terms generated by the cutoff can be bounded in terms of the collar. Specifically, the commutator $[\Delta,\chi]v=2\nabla\chi\cdot\nabla v+(\Delta\chi)v$ and the leftover term \((1-\chi)v\) are controlled by $C\mathcal{N}_{\theta,O}(v)$ in the weighted energy and \(L^2\) norms, respectively. Combining these with the estimate for \(\chi v\) proves \cref{eq:linear-collar}.
\end{proof}
We now transfer this estimate to discrete states supported in \(U\). Let
\begin{align*}
Z_h(U)=\{z_h\in V_{h,0}(D):\operatorname{supp} z_h\subset\overline U\}.
\end{align*}
The set $U$ need not be mesh fitted; $Z_h(U)$ is simply a subspace of the finite-element space on $D$. By applying the transfer theorem and absorbing the lower-order potential terms, we obtain a uniform coercivity estimate for the Schr\"odinger operator.
\begin{theorem}\label{thm:linear-discrete}
	For either mass realization on the fitted, uniformly shape-regular meshes of \cref{sec:realizations}, there exist constants \(s_0,c,C,h_0>0\), depending only on the fixed data and the separation of \(U\) from \(\partial D\), such that, uniformly in \(|\mathbf{a}|=1\), the estimate
	\begin{align}\label{eq:linear-discrete}
		s\lVert z_h\rVert_{X_h,\theta}+\lVert \theta\nabla z_h\rVert_{L^2(D)}
		\leq C\lVert A_{q,h}z_h\rVert_{X_h,\theta}
	\end{align}
	holds for all \(0<h\leq h_0\), \(s_0\leq s\leq ch^{-2/3}\), and \(z_h\in Z_h(U)\).
\end{theorem}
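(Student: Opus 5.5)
We apply \cref{thm:transfer} with \(A=-\Delta_D\), \(\theta=e^{s\mathbf a\cdot x}\), \(\gamma=0\), \(\mathcal E_s=\mathcal N_{\theta,D}^2\), and the observation seminorm \(\mathcal O_s(v)=\mathcal N_{\theta,O}(v)\). By \cref{lem:linear-cont}, \cref{ass:carleman} holds uniformly in \(|\mathbf a|=1\), since \(\mathcal O_s\leq\mathcal N_{\theta,D}\) trivially. \Cref{ass:weighted,ass:regularity} follow from \cref{lem:screen-regularity}, because \(\|\rho\|_{C^2}\) is uniformly bounded for \(\rho=\mathbf a\cdot x\). \Cref{ass:loads,Papprox,Ptest} follow from \cref{prop:transfer-realizations} with \(\alpha,\alpha^*\leq Ch\). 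For \cref{ass:observation} we take \(\mathcal E_{s,h}(z)=\mathcal N_{\theta,D}(z)^2\) and \(\mathcal O_{s,h}(z)=\mathcal N_{\theta,O}(z)\), so that compatibility is an identity. \Cref{cor:budgets} with \(\gamma=0\) then gives, for \(0<h\leq h_0\) and \(s_0\leq s\leq ch^{-2/3}\) (which also ensures \(hs\leq c_0\)),
\begin{align*}
\mathcal N_{\theta,D}(z_h)^2\leq C\bigl(\lVert A_hz_h\rVert_{X_h,\theta}^2+\mathcal N_{\theta,O}(z_h)^2\bigr),\qquad z_h\in X_h.
\end{align*}

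The main obstacle is removing the observation term for \(z_h\in Z_h(U)\). Since \(\overline O\cap\overline U=\varnothing\), we have \(z_h=0\) on \(O\), and the observation vanishes. Here we must make sure the collar in \cref{lem:linear-cont} is an open set disjoint from \(\overline U\), so that \(\operatorname{supp}z_h\subset\overline U\) forces \(z_h|_O=0\) exactly. The support is understood as that of the piecewise affine function itself, which need not be mesh fitted, so no enlargement by an element layer is needed. With this, \(\mathcal N_{\theta,O}(z_h)=0\) and the estimate becomes
\begin{align*}
s\lVert\theta z_h\rVert_{L^2(D)}+\lVert\theta\nabla z_h\rVert_{L^2(D)}\leq C\lVert A_hz_h\rVert_{X_h,\theta}.
\end{align*}
The constants depend on the separation of \(U\) from \(\partial D\) through the choice of the cutoff \(\chi\).

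Next we add the potential. By \cref{lem:mass-duality}, \(\lVert C_h(q)z_h\rVert_{X_h,\theta}\leq CM\lVert z_h\rVert_{X_h,\theta}\), so
\begin{align*}
\lVert A_hz_h\rVert_{X_h,\theta}\leq\lVert A_{q,h}z_h\rVert_{X_h,\theta}+CM\lVert z_h\rVert_{X_h,\theta}.
\end{align*}
By \cref{eq:weighted-mass-equivalence}, \(\lVert z_h\rVert_{X_h,\theta}\asymp\lVert\theta z_h\rVert_{L^2(D)}\), so the left side controls \(s\lVert z_h\rVert_{X_h,\theta}\). Enlarging \(s_0\) so that \(CM\leq s_0/2\) lets us absorb the potential term into the left-hand side. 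This yields \cref{eq:linear-discrete}.

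Finally, \(h_0\) is decreased so that \([s_0,ch^{-2/3}]\) is nonempty. All constants are uniform in \(\mathbf a\), because every ingredient depends on \(\rho\) only through \(\|\rho\|_{C^2}\leq\operatorname{diam}\)-type bounds.
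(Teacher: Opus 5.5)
Your proposal is correct and follows essentially the same route as the paper's proof. You apply \cref{thm:transfer,cor:budgets} with \(\gamma=0\), \(\mathcal E_s=\mathcal N_{\theta,D}^2\) and \(\mathcal O_s=\mathcal N_{\theta,O}\), use \(\overline O\cap\overline U=\varnothing\) to kill the observation on \(Z_h(U)\), and absorb \(C_h(q)z_h\) via \cref{eq:potential-weighted,eq:weighted-mass-equivalence} by enlarging \(s_0\) and then shrinking \(h_0\).
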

\begin{proof}
	We apply the abstract transfer theorem \cref{thm:transfer} and the parameter budget \cref{cor:budgets} to the continuous estimate \cref{eq:linear-collar}. 
	
	\emph{Step 1: Set up the transfer.} 
	Set \(\gamma=0\) and choose the continuous energy and observation terms as \(\mathcal{E}_s=\mathcal{N}_{\theta,D}^2\) and \(\mathcal{O}_s=\mathcal{N}_{\theta,O}\), respectively. The corresponding discrete terms are \(\mathcal{E}_{s,h}(z_h)=\mathcal{N}_{\theta,D}(z_h)^2\) and \(\mathcal{O}_{s,h}(z_h)=\mathcal{N}_{\theta,O}(z_h)\), which satisfy the compatibility assumption \cref{ass:observation}. The numerical assumptions are provided by \cref{prop:transfer-realizations}.
	
	\emph{Step 2: Vanishing observation.} 
	For any state \(z_h\in Z_h(U)\), the support condition and \(\overline O\cap\overline U=\varnothing\) imply that the observation term vanishes. Consequently, \cref{eq:discrete-CE} reduces to
	\begin{align*}
		\mathcal{N}_{\theta,D}(z_h)\leq C\lVert A_hz_h\rVert_{X_h,\theta}.
	\end{align*}
	
	\emph{Step 3: Absorb the potential.} 
	To relate this to the Schr\"odinger operator \(A_{q,h}\), we use the decomposition \(A_hz_h=A_{q,h}z_h-C_h(q)z_h\). By \cref{eq:potential-weighted,eq:weighted-mass-equivalence}, the potential term satisfies
	\begin{align*}
		\lVert C_h(q)z_h\rVert_{X_h,\theta}
		\leq CM\lVert z_h\rVert_{X_h,\theta}
		\leq CM s^{-1}\mathcal{N}_{\theta,D}(z_h).
	\end{align*}
	Choosing \(s_0\) sufficiently large, this lower-order contribution can be absorbed into the left-hand side.
	
	\emph{Step 4: Final bound.} 
	A final use of \cref{eq:weighted-mass-equivalence} recovers the gradient and \(L^2\) terms from \(\mathcal{N}_{\theta,D}(z_h)\), yielding \cref{eq:linear-discrete}. Finally, we decrease \(h_0\) if necessary so that the condition \(s_0\leq ch_0^{-2/3}\) holds.
\end{proof}

The residual norm in \cref{eq:linear-discrete} is taken on the whole auxiliary domain $D$.
In the nodal basis, $A_h$ has matrix $\mathsf{M}_h^{-1}\mathsf{K}_h$, where $\mathsf{M}_h$ and $\mathsf{K}_h$ are the mass and stiffness matrices. For consistent mass, $\mathsf{M}_h^{-1}$ is generally dense, so $A_{q,h}$ need not preserve supports. We therefore impose local equations variationally, as in \cref{eq:local-equation}.

\subsection{Radial weights and local observation}
For the quantitative continuation estimates in \cref{sec:local}, we need a Carleman estimate that observes a state only on a small interior ball. For this purpose, we switch to the radial weight $\theta=e^{s\rho}$ with $\rho(x)=e^{-\lambda|x-x_*|^2}$, which is tailored to the geometry of concentric balls. We also allow a bounded first-order drift term, which arises naturally in the localization process. Let $b\in L^\infty(D;\mathbb{C}^d)$ satisfy $\lVert b\rVert_{L^\infty(D)}\leq M_1$, and define, for $u_h,v_h\in V_h(D)$,
\begin{align}\label{eq:drift-form}
a_{b,q,h}(u_h,v_h)=a_{q,h}(u_h,v_h)+\int_D(b\cdot\nabla u_h)\overline{v_h}\,dx.
\end{align}
The integral in \cref{eq:drift-form} is used for both mass forms. On $X_h=V_{h,0}(D)$, we define the operator \(A_{b,q,h}\) by
\begin{align*}
(A_{b,q,h}z_h,v_h)_{X_h}=a_{b,q,h}(z_h,v_h),\qquad z_h,v_h\in X_h.
\end{align*}
In the radial, local energy, and continuation estimates, the fixed data also include $M_1$; the drift may depend on $h$ subject to this uniform bound.

Fix $x_*\in D$ and a ball $B_*\Subset D$ centered at $x_*$. For a sufficiently large fixed $\lambda$, set
\begin{align}\label{eq:radial-weight}
\rho(x)=e^{-\lambda|x-x_*|^2}.
\end{align}
The following continuous estimate is proved in \cref{app:radial}. It shows that the weighted energy in the whole domain can be controlled by the Laplacian residual and the weighted \(L^2\) norm observed on the inner ball $B_*$.
\begin{align}\label{eq:radial-cont}
\mathcal{N}_{\theta,D}(v)^2
\leq C\bigl(s^{-1}\lVert \theta\Delta v\rVert_{L^2(D)}^2
+s^2\lVert \theta v\rVert_{L^2(B_*)}^2\bigr),
\qquad v\in H^2(D)\cap H_0^1(D).
\end{align}

The following theorem transfers \cref{eq:radial-cont} to the discrete operator \(A_{b,q,h}\). Its exact-load formulation is essential for the localized equations in \cref{sec:local}, where the strong discrete residual may not be well-defined.
\begin{theorem}\label{thm:radial-discrete}
	For either mass realization with the potential forms from \cref{eq:consistent-reaction,eq:lumped-reaction}, and for \(b\) and \(\theta\) as above, there exist constants \(s_0,c,C,h_0>0\), depending only on the fixed data, such that for all \(0<h\leq h_0\) and \(s_0\leq s\leq ch^{-4/5}\), the estimate
	\begin{align}\label{eq:radial-discrete}
		\lVert\theta\nabla z_h\rVert_{L^2(D)}^2+s^2\lVert\theta z_h\rVert_{L^2(D)}^2
		\leq C\bigl(s^{-1}\lVert A_{b,q,h}z_h\rVert_{X_h,\theta}^2
		+s^2\lVert\theta z_h\rVert_{L^2(B_*)}^2\bigr)
	\end{align}
	holds for all \(z_h\in V_{h,0}(D)\). Moreover, if \(F\in L^2(D)\) satisfies \(a_{b,q,h}(z_h,v_h)=(F,v_h)_{L^2(D)}\) for all \(v_h\in V_{h,0}(D)\), then
	\begin{align}\label{eq:radial-load}
		\mathcal{N}_{\theta,D}(z_h)^2
		\leq C\bigl(s^{-1}\lVert\theta F\rVert_{L^2(D)}^2
		+s^2\lVert\theta z_h\rVert_{L^2(B_*)}^2\bigr).
	\end{align}
\end{theorem}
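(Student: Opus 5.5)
We will apply \cref{thm:transfer} with \(\gamma=1\) to the Dirichlet Laplacian \(A=-\Delta_D\), and then treat the drift and potential as perturbations.

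\emph{Setting up the transfer.} Take \(\mathcal{E}_s=\mathcal{N}_{\theta,D}^2\) and \(\mathcal{O}_s(v)=s\lVert\theta v\rVert_{L^2(B_*)}\). Then \(\mathcal{O}_s\leq\mathcal{N}_{\theta,D}\), and \cref{eq:radial-cont} is exactly \cref{eq:continuousabstract}. On \(X_h=V_{h,0}(D)\) take \(\mathcal{E}_{s,h}(z_h)=\mathcal{N}_{\theta,D}(z_h)^2\) and \(\mathcal{O}_{s,h}(z_h)=s\lVert\theta z_h\rVert_{L^2(B_*)}\); with \(R_h\) the inclusion, \cref{ass:observation} is trivial. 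The remaining hypotheses are already available: \cref{ass:weighted,ass:regularity} hold by \cref{lem:screen-regularity} since \(\rho\in C^2(\overline D)\), and \cref{ass:loads,Papprox,Ptest} hold with \(\alpha,\alpha^*\leq Ch\) by \cref{prop:transfer-realizations}. \Cref{cor:budgets} with \(\gamma=1\) therefore gives \cref{eq:exact-load-CE} for the pure Laplacian form \(a(u,v)=\int_D\nabla u\cdot\nabla\bar v\), whenever \(0<h\leq h_0\) and \(s_0\leq s\leq ch^{-4/5}\).

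\emph{Exact-load version \cref{eq:radial-load}.} Let \(z_h,F\) satisfy \(a_{b,q,h}(z_h,v_h)=(F,v_h)\) for all \(v_h\in V_{h,0}(D)\). We rewrite this as a load identity \cref{eq:load-identity} for the Laplacian with a modified load \(\tilde F=F-b\cdot\nabla z_h-\tilde q z_h\). Here \(\tilde q z_h\in L^2\) must represent \(c_h(q;z_h,\cdot)\) on \(V_h\).
\begin{itemize}
\item For consistent mass, take \(\tilde qz_h=qz_h\).
\item For lumped mass, take \(\mathcal{L}_h(C_h(q)z_h)\) using the broken lift \cref{eq:broken-lift}. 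Its weighted norm is bounded by \(CM\lVert z_h\rVert_{X_h,\theta}\leq CM\lVert\theta z_h\rVert_{L^2(D)}\) via \cref{eq:load-stability}, \cref{eq:potential-weighted} and \cref{eq:weighted-mass-equivalence}.
\end{itemize}
Then
\[
s^{-1}\lVert\theta\tilde F\rVert^2\leq Cs^{-1}\lVert\theta F\rVert^2+Cs^{-1}\bigl(M_1^2\lVert\theta\nabla z_h\rVert^2+M^2\lVert\theta z_h\rVert^2\bigr)\leq Cs^{-1}\lVert\theta F\rVert^2+Cs^{-1}\mathcal{N}_{\theta,D}(z_h)^2 .
\]
The drift term costs only a factor \(s^{-1}\) relative to the energy, which is exactly why \(\gamma=1\) is needed. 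Enlarging \(s_0\) absorbs this term into the left-hand side, giving \cref{eq:radial-load}.

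\emph{Strong version \cref{eq:radial-discrete}.} Choose \(F=\mathcal{L}_hA_{b,q,h}z_h\). By \cref{eq:load-lifting} it satisfies the variational identity, and by \cref{eq:load-stability} we have \(\lVert\theta F\rVert\leq C\lVert A_{b,q,h}z_h\rVert_{X_h,\theta}\). Substituting into \cref{eq:radial-load} yields \cref{eq:radial-discrete}.

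\emph{Main obstacle.} The delicate point is the perturbation absorption. The drift generates \(s^{-1}\lVert\theta\nabla z_h\rVert^2\) with no spare power of \(s\) beyond \(s^{-1}\), so the constant \(C\) from the transfer must be fixed before \(s_0\) is enlarged. One must also keep the requirement \(hs\leq c_0\) compatible with \(s\leq ch^{-4/5}\) for small \(h\), which the budget in \cref{cor:budgets} ensures. Finally, \(h_0\) is decreased so that \(s_0\leq ch_0^{-4/5}\).
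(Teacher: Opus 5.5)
Your proposal is correct and follows essentially the paper's argument: you transfer \cref{eq:radial-cont} for the unperturbed Dirichlet Laplacian with \(\gamma=1\) via \cref{thm:transfer,cor:budgets}, represent the drift and potential (using the broken lift for lumped mass) as \(L^2\) loads, and absorb the resulting \(Cs^{-1}\mathcal N_{\theta,D}(z_h)^2\) by enlarging \(s_0\). The only difference is the ordering: you prove \cref{eq:radial-load} first and then obtain \cref{eq:radial-discrete} by taking \(F=\mathcal L_hA_{b,q,h}z_h\), whereas the paper proves \cref{eq:radial-discrete} directly from \cref{eq:discrete-CE}, bounding \(\lVert(A_{b,q,h}-A_{q,h})z_h\rVert_{X_h,\theta}\) via \cref{lem:mass-duality}, and derives the load version afterwards; your ordering is slightly more economical.
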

\begin{proof}
	We prove the two estimates sequentially.
	
	\emph{Step 1: The first estimate.} 
	We apply \cref{thm:transfer,cor:budgets} to the unperturbed Dirichlet Laplacian and the continuous estimate \cref{eq:radial-cont}, with \(\gamma=1\). The energy and observation terms are chosen as \(\mathcal{E}_{s,h}(z_h)=\mathcal{N}_{\theta,D}(z_h)^2\) and \(\mathcal{O}_{s,h}(z_h)=s\lVert\theta z_h\rVert_{L^2(B_*)}\), which satisfy \cref{ass:observation}. The numerical assumptions are supplied by \cref{prop:transfer-realizations}.
	
	\emph{Step 2: Absorbing the lower-order terms.} 
	To relate \(A_h\) to the perturbed operator \(A_{b,q,h}\), we use the decomposition
	\begin{align*}
		A_hz_h = A_{b,q,h}z_h - (A_{b,q,h}-A_{q,h})z_h - C_h(q)z_h.
	\end{align*}
	By \cref{lem:mass-duality,eq:weighted-mass-equivalence}, the weighted Cauchy--Schwarz inequality yields
	\begin{align*}
		\lVert(A_{b,q,h}-A_{q,h})z_h\rVert_{X_h,\theta} \leq CM_1\lVert\theta\nabla z_h\rVert_{L^2(D)}.
	\end{align*}
	Substituting this and the potential bound \cref{eq:potential-weighted} into \cref{eq:discrete-CE}, the drift and potential contributions are bounded by
	\begin{align}\label{eq:lower-order-absorption}
		C\biggl(\frac{M_1^2}{s}+\frac{M^2}{s^3}\biggr)\mathcal N_{\theta,D}(z_h)^2.
	\end{align}
	Choosing \(s_0\) sufficiently large allows us to absorb this term into the left-hand side of \cref{eq:discrete-CE}, proving the first estimate \cref{eq:radial-discrete}.
	
	\emph{Step 3: The load formulation.} 
	We now prove \cref{eq:radial-load}. Define a load \(F_{q,h}\) representing the potential term: let \(F_{q,h}=qz_h\) for consistent mass, and let \(F_{q,h}\) be the broken lift \cref{eq:broken-lift} of \((q_iz_i)_i\) for lumped mass. In both cases, we have
	\begin{align*}
		(F_{q,h},v_h)_{L^2(D)}=c_h(q;z_h,v_h),\qquad v_h\in V_{h,0}(D).
	\end{align*}
	By the potential bound and \cref{eq:load-stability,eq:weighted-mass-equivalence}, we deduce
	\begin{align*}
		\lVert\theta F_{q,h}\rVert_{L^2(D)}\leq CM\lVert\theta z_h\rVert_{L^2(D)}.
	\end{align*}
	The assumed variational equation can thus be rewritten as
	\begin{align*}
		(\nabla z_h,\nabla v_h)_{L^2(D)}
		=(F-b\cdot\nabla z_h-F_{q,h},v_h)_{L^2(D)},\qquad v_h\in V_{h,0}(D).
	\end{align*}
	Applying \cref{eq:exact-load-CE} to this equation and absorbing the lower-order terms as in \cref{eq:lower-order-absorption}, we obtain \cref{eq:radial-load}. The proof concludes by choosing \(h_0\) small enough to ensure \(s_0\leq ch_0^{-4/5}\).
\end{proof}

\subsection{Local equations and a Caccioppoli estimate}

The CGO and continuation arguments both rely on controlling the local energy of a discrete function that satisfies an equation only on a subdomain. To formalize this, for an open set $G\subset D$, we define the local test space
\begin{align*}
\mathcal V_h(G)=\{v_h\in V_{h,0}(D):\operatorname{supp}v_h\Subset G\}.
\end{align*}
For $u_h\in V_h(D)$ and $f\in L^2(D)$, we say that the discrete equation holds in $G$ if
\begin{align}\label{eq:local-equation}
a_{b,q,h}(u_h,v_h)=(f,v_h)_{L^2(D)},\qquad
v_h\in\mathcal V_h(G).
\end{align}
Since the test functions are compactly supported in $G$, the evaluation of the forms only involves the region $G$, allowing us to use cutoffs separated from the boundary of the region where the equation is imposed. The following Caccioppoli estimate is a standard tool in local finite-element theory; see \cite[Section~2]{NS1974}. We provide the proof to track the dependence on the measurable drift and the two potential realizations.
\begin{lemma}\label{lem:caccioppoli}
	Let $G_0\Subset G_1\Subset G$ be fixed open sets. There are \(C,h_0>0\), depending only on the fixed data and these sets, such that every solution of \cref{eq:local-equation} satisfies, for \(0<h\leq h_0\),
	\begin{align}\label{eq:caccioppoli}
		\lVert u_h\rVert_{H^1(G_0)}
		\leq C\bigl(\lVert u_h\rVert_{L^2(G_1)}+\lVert f\rVert_{L^2(G_1)}\bigr).
	\end{align}
\end{lemma}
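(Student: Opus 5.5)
The plan is to follow the standard discrete Caccioppoli argument of \cite[Section~2]{NS1974}: test \cref{eq:local-equation} with a discrete version of \(\chi^2u_h\), then control the interpolation defect by superapproximation. Fix an intermediate set \(G_0\Subset G_{1/2}\Subset G_1\) and a cutoff \(\chi\in C_c^\infty(G_{1/2})\) with \(\chi=1\) on \(G_0\) and \(|\nabla\chi|+|D^2\chi|\le C\). Take the test function \(v_h=I_h(\chi^2u_h)\). For \(h\le h_0\) with \(h_0\) smaller than the separation of \(G_{1/2}\) from \(\partial G_1\), this function lies in \(\mathcal V_h(G_1)\subset\mathcal V_h(G)\). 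We write \(v_h=\chi^2u_h+\eta_h\) with \(\eta_h=I_h(\chi^2u_h)-\chi^2u_h\).

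\emph{Main term.} The term \(\int\nabla u_h\cdot\nabla\overline{\chi^2u_h}\) equals \(\lVert\chi\nabla u_h\rVert^2+2\int\chi\nabla u_h\cdot\overline{u_h}\nabla\chi\). Young's inequality bounds the cross term by \(\tfrac14\lVert\chi\nabla u_h\rVert^2+C\lVert u_h\rVert_{L^2(G_1)}^2\). The drift term \(\int(b\cdot\nabla u_h)\chi^2\bar u_h\) is bounded by \(M_1\lVert\chi\nabla u_h\rVert\,\lVert u_h\rVert_{L^2(G_1)}\), and Young absorbs it the same way. The right-hand side \((f,\chi^2u_h)\) is harmless.

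The potential needs separate treatment in the two realizations. For consistent mass it is bounded by \(M\lVert u_h\rVert_{L^2(G_1)}^2\). For lumped mass we have \(c_h(q;u_h,v_h)=\sum_i m_iq_iu_i\overline{\chi(x_i)^2u_i}\), since \(v_h\) has nodal values \(\chi^2(x_i)u_i\). This is bounded by \(M\) times the nodal norm over nodes of elements meeting \(G_{1/2}\), and local norm equivalence bounds that by \(C\lVert u_h\rVert_{L^2(G_1)}^2\) once \(h\) is small. Here we test directly with \(v_h\) rather than splitting off \(\eta_h\), which is convenient.

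\emph{Defect term; main obstacle.} The remaining terms come from \(\eta_h\) in the gradient, drift, and (consistent-mass) potential and load pairings. I will use the superapproximation argument of Step 2 of \cref{lem:interp} with \(\sigma\) replaced by the smooth \(\chi^2\). Since \(u_h\) is affine on each \(K\), we have \(D^2(\chi^2u_h)=u_hD^2\chi^2+2\,\mathrm{sym}(\nabla\chi^2\otimes\nabla u_h)\). This gives
\[
\lVert\eta_h\rVert_{L^2(K)}+h_K\lVert\nabla\eta_h\rVert_{L^2(K)}\le Ch_K^2\bigl(\lVert u_h\rVert_{L^2(K)}+\lVert\nabla u_h\rVert_{L^2(K)}\bigr),
\]
and the right-hand side vanishes on elements where \(\chi\) is constant. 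The key refinement is \(|\nabla\chi^2|\le C\chi\), which yields \(\lVert\nabla\eta_h\rVert_{L^2(K)}\le Ch(\lVert\chi\nabla u_h\rVert_{L^2(K)}+\lVert u_h\rVert_{L^2(K)})\) up to oscillation of \(\chi\) across \(K\). Where this refinement is not clean, I will instead use the inverse inequality \(h_K\lVert\nabla u_h\rVert_{L^2(K)}\le C\lVert u_h\rVert_{L^2(K)}\), which gives \(\lVert\nabla\eta_h\rVert_{L^2(K)}\le C\lVert u_h\rVert_{L^2(K)}\) with no factor of \(h\).

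Summed over the mesh, \(|(\nabla u_h,\nabla\eta_h)|\) is then bounded by \(\lVert\nabla u_h\rVert_{L^2(G_1)}\lVert u_h\rVert_{L^2(G_1)}\). This involves the gradient on the larger set, so it does not close directly. I will close it with the standard weighted-cutoff trick: take \(\chi\) to be a power of a cutoff, so that \(|\nabla\chi|\le C\chi^{1-\epsilon}\), or alternatively iterate over a finite nested chain of sets, as in Nitsche--Schatz. Either way the \(\eta_h\) contribution becomes at most \(\tfrac14\lVert\chi\nabla u_h\rVert^2+Ch^2\lVert\nabla u_h\rVert_{L^2(G_1)}^2+C\lVert u_h\rVert_{L^2(G_1)}^2\). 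The last step is to bound \(h\lVert\nabla u_h\rVert_{L^2(G_1)}\le C\lVert u_h\rVert_{L^2(G_1')}\) by the local inverse inequality on a slightly larger set \(G_1'\), and then relabel the sets. This absorption is the step I expect to require the most care. Absorbing the \(\tfrac14\) terms then gives \(\lVert\nabla u_h\rVert_{L^2(G_0)}\le C(\lVert u_h\rVert_{L^2(G_1)}+\lVert f\rVert_{L^2(G_1)})\), which proves \cref{eq:caccioppoli}.
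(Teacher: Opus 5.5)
Your architecture is the paper's: you test with \(I_h(\chi^2u_h)\), split off the superapproximation defect, and absorb with Young's inequality, and your treatment of the potential, drift and load terms is fine. The gap is the step you flag yourself, the stiffness defect \((\nabla u_h,\nabla\eta_h)\). You never close it, and your fallback routes do not work in this setting.

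\begin{itemize}
\item The crude bound \(\lVert\nabla\eta_h\rVert_{L^2(K)}\le C\lVert u_h\rVert_{L^2(K)}\) leaves \(\lVert\nabla u_h\rVert_{L^2(G_1)}\lVert u_h\rVert_{L^2(G_1)}\). Iterating over nested sets only lowers the power of the outer gradient norm, to \(2^{-k}\) after \(k\) steps, and never removes it.
\item The bound \(\tfrac14\lVert\chi\nabla u_h\rVert^2+Ch^2\lVert\nabla u_h\rVert_{L^2(G_1)}^2+C\lVert u_h\rVert_{L^2(G_1)}^2\) is asserted, not derived.
\item Your final step \(h\lVert\nabla u_h\rVert_{L^2(G_1)}\le C\lVert u_h\rVert_{L^2(G_1')}\) uses the global mesh size, so it is an inverse inequality requiring local quasi-uniformity. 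Here the meshes are only uniformly shape regular and may be locally refined. Only \(h_K\lVert\nabla u_h\rVert_{L^2(K)}\le C\lVert u_h\rVert_{L^2(K)}\) is available, and on elements with \(h_K\ll h\) the quantity \(h\lVert\nabla u_h\rVert_{L^2(K)}\) is not controlled by \(\lVert u_h\rVert_{L^2(K)}\).
\end{itemize}

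The refinement you wrote down and then set aside closes the argument in one step, and this is exactly the paper's proof. Set \(\chi_K=\sup_K\chi\). The Hessian identity and \(|\nabla\chi^2|\le C\chi\) give
\(\lVert\nabla\eta_h\rVert_{L^2(K)}\le Ch_K\bigl(\chi_K\lVert\nabla u_h\rVert_{L^2(K)}+\lVert u_h\rVert_{L^2(K)}\bigr)\), and no oscillation issue arises at this stage. Now spend the factor \(h_K\) on the \emph{other} gradient through the elementwise inverse inequality:
\[
|(\nabla u_h,\nabla\eta_h)_{L^2(K)}|\le C\lVert u_h\rVert_{L^2(K)}\bigl(\chi_K\lVert\nabla u_h\rVert_{L^2(K)}+\lVert u_h\rVert_{L^2(K)}\bigr).
\]
The oscillation is harmless. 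On \(K\) one has \(\chi_K\le\chi+Ch_K\), so the same inverse inequality gives \(\chi_K\lVert\nabla u_h\rVert_{L^2(K)}\le\lVert\chi\nabla u_h\rVert_{L^2(K)}+C\lVert u_h\rVert_{L^2(K)}\). Summing over elements,
\[
|(\nabla u_h,\nabla\eta_h)|\le C\lVert u_h\rVert_{L^2(G_1)}\bigl(\lVert\chi\nabla u_h\rVert_{L^2(D)}+\lVert u_h\rVert_{L^2(G_1)}\bigr),
\]
which Young's inequality absorbs directly. No power-of-cutoff trick, iteration, or enlarged set is needed, and the constant stays uniform under local refinement. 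The drift, load, and consistent-mass potential defects are handled the same way and carry an extra factor \(h_K\).
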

\begin{proof}
	We test the local equation with a smoothed, localized version of \(u_h\).
	
	\emph{Step 1: Test function and interpolation error.}
	Choose a smooth cutoff \(0\leq\eta\leq1\) equal to one near \(G_0\) and supported strictly inside \(G_1\). Take \(h_0\leq1\) small enough that \(I_h(\eta^2u_h)\) is supported in \(G_1\) for all \(h\leq h_0\); then \(I_h(\eta^2u_h)\in\mathcal V_h(G)\).
	Set \(e_h=I_h(\eta^2u_h)-\eta^2u_h\) and \(\eta_K=\sup_K|\eta|\). Elementwise superapproximation gives
	\begin{align*}
		\lVert e_h\rVert_{L^2(K)}+h_K\lVert\nabla e_h\rVert_{L^2(K)}
		\leq Ch_K^2\bigl(\eta_K\lVert\nabla u_h\rVert_{L^2(K)}+\lVert u_h\rVert_{L^2(K)}\bigr).
	\end{align*}
	Denote \(X=\lVert\eta\nabla u_h\rVert_{L^2(D)}\) and \(U_*=\lVert u_h\rVert_{L^2(G_1)}\).
	
	\emph{Step 2: Bounds on the individual terms.}
	We estimate the contributions to the form \(a_{b,q,h}(u_h,I_h(\eta^2u_h))\). 
	First, the stiffness term splits as \((\nabla u_h,\nabla I_h(\eta^2u_h))=X^2+(\nabla u_h,\nabla e_h)\). Using \(|\eta_K-\eta|\leq Ch_K\) and the elementwise inverse estimate together with the bound on \(e_h\), we obtain
	\begin{align*}
		|(\nabla u_h,\nabla e_h)_{L^2(D)}|\leq CU_*(X+U_*).
	\end{align*}
	Next, the drift term satisfies
	\begin{align*}
		\Bigl|\int_D(b\cdot\nabla u_h)\overline{I_h(\eta^2u_h)}\,dx\Bigr|
		\leq CM_1\bigl(XU_*+hU_*(X+U_*)\bigr).
	\end{align*}
	Finally, for the potential term, we have \(|c_h(q;u_h,I_h(\eta^2u_h))|\leq CMU_*^2\), which follows from local \(L^2\) stability of nodal multiplication for consistent mass and nodal norm equivalence for lumped mass.
	
	\emph{Step 3: Absorption via Young's inequality.}
	Testing \cref{eq:local-equation} with \(I_h(\eta^2u_h)\) and taking real parts, the principal term is \(X^2\). The cutoff cross term is bounded by \(CU_*X\), and the forcing contributes at most \(C\lVert f\rVert_{L^2(G_1)}U_*\). For \(h\leq1\), applying Young's inequality absorbs the linear terms in \(X\) into \(X^2\), yielding
	\begin{align*}
		X^2 \leq C\bigl(U_*^2+\lVert f\rVert_{L^2(G_1)}^2\bigr).
	\end{align*}
	Since \(\eta=1\) near \(G_0\), we have \(\lVert\nabla u_h\rVert_{L^2(G_0)}\leq X\), and \(\lVert u_h\rVert_{L^2(G_0)}\leq U_*\). Taking square roots proves \cref{eq:caccioppoli}.
\end{proof}

\section{Discrete CGO solutions}\label{sec:cgo}
We now construct discrete complex geometrical optics (CGO) solutions, which will later be used to probe Fourier modes of the potential. Throughout this section, we set $b=0$ and work with the Schr\"odinger form $a_{q,h}$ on $X_h=V_{h,0}(D)$. The construction proceeds in two stages. First, building on the linear Carleman estimate of \cref{thm:linear-discrete}, we derive a local weighted solvability estimate for the discrete Schr\"odinger operator. Second, we take a shifted Galerkin approximation of a cut-off harmonic exponential and use the solvability estimate to correct its local discrete defect exactly. This yields a discrete solution that satisfies the homogeneous equation in a local region and approximates a harmonic exponential there.

\subsection{A local weighted solvability estimate}
The linear Carleman estimate \cref{eq:linear-discrete} provides a coercivity bound for the Schr\"odinger operator when tested against functions supported in $U$. By duality, this yields the following solvability estimate for local equations, which will serve as the key tool for correcting the discrete defect in the CGO construction.
\begin{lemma}\label{lem:right-inverse}
	Let $\theta=e^{s\mathbf{a}\cdot x}$ with $|\mathbf{a}|=1$, and suppose $h,s$ lie in the range of \cref{thm:linear-discrete}. If an antilinear functional $F$ on $Z_h(U)$ satisfies, for some $L\geq0$,
	\begin{align*}
		|F(v_h)|\leq L\lVert v_h\rVert_{X_h,\theta^{-1}},\qquad v_h\in Z_h(U),
	\end{align*}
	then there exists \(r_h\in V_{h,0}(D)\) such that
	\begin{align*}
		a_{q,h}(r_h,v_h)=F(v_h),\qquad v_h\in Z_h(U),
	\end{align*}
	with the weighted bound
	\begin{align}\label{eq:right-inverse}
		\lVert r_h\rVert_{X_h,\theta}\leq Cs^{-1}L.
	\end{align}
\end{lemma}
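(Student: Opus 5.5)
This is a standard duality argument (Hahn--Banach / Lax--Milgram in finite dimensions), using the adjoint version of \cref{thm:linear-discrete}. The adjoint of \(A_{q,h}\) with respect to \((\cdot,\cdot)_{X_h}\) is \(A_{\bar q,h}\), and \(\bar q\) satisfies the same bound \(M\). So \cref{thm:linear-discrete} applied to \(\bar q\), with \(\mathbf a\) replaced by \(-\mathbf a\) (so that the weight becomes \(\theta^{-1}=e^{s(-\mathbf a)\cdot x}\)), gives
\begin{align*}
s\lVert v_h\rVert_{X_h,\theta^{-1}}\leq C\lVert A_{\bar q,h}v_h\rVert_{X_h,\theta^{-1}},\qquad v_h\in Z_h(U).
\end{align*}
The uniformity in \(|\mathbf a|=1\) is what makes this available with the same constants.

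I would then define the subspace \(W=\{A_{\bar q,h}v_h: v_h\in Z_h(U)\}\subset X_h\). The estimate above shows that \(A_{\bar q,h}\) is injective on \(Z_h(U)\). On \(W\) I define the antilinear functional \(\Phi(A_{\bar q,h}v_h)=F(v_h)\), which is well defined by injectivity, and bound it by
\begin{align*}
|\Phi(w)|\leq L\lVert v_h\rVert_{X_h,\theta^{-1}}\leq CLs^{-1}\lVert w\rVert_{X_h,\theta^{-1}}.
\end{align*}
Next I extend \(\Phi\) to all of \(X_h\) without increasing its norm with respect to \(\lVert\cdot\rVert_{X_h,\theta^{-1}}\). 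Since this is a Hilbert norm, the cleanest way is orthogonal projection onto \(W\) in the inner product inducing \(\lVert\cdot\rVert_{X_h,\theta^{-1}}\).

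By finite dimensionality there is a unique \(r_h\in X_h\) with \(\Phi(w)=(r_h,w)_{X_h}\) for all \(w\in X_h\); this representation is antilinear in \(w\), consistent with linearity of the inner product in its first argument. For \(v_h\in Z_h(U)\) this gives
\begin{align*}
a_{q,h}(r_h,v_h)=(A_{q,h}r_h,v_h)_{X_h}=(r_h,A_{\bar q,h}v_h)_{X_h}=\Phi(A_{\bar q,h}v_h)=F(v_h),
\end{align*}
which is the required equation.

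The step needing the most care is the weighted bound \cref{eq:right-inverse}. Here I would invoke the duality \cref{eq:mass-dual} from \cref{lem:mass-duality}, which is valid since \(hs\leq Ch^{1/3}\leq c_0\) for small \(h_0\):
\begin{align*}
\lVert r_h\rVert_{X_h,\theta}\leq C\sup_{w\neq0}\frac{|(r_h,w)_{X_h}|}{\lVert w\rVert_{X_h,\theta^{-1}}}=C\lVert\Phi\rVert\leq Cs^{-1}L.
\end{align*}
It remains to confirm that the norm-preserving extension is compatible with using \cref{eq:mass-dual} with constants independent of \(h,s\). The projection-based extension settles this, because the dual norm appearing in \cref{eq:mass-dual} is exactly the norm that the extension preserves.
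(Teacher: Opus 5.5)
Your proof is correct and follows essentially the paper's route: the transferred linear Carleman estimate for $\bar q$ with weight $\theta^{-1}$, combined with the mass duality \cref{eq:mass-dual} and finite-dimensional duality. The difference is only in packaging: the paper applies \cref{eq:mass-dual} to $A_{\bar q,h}v_h$ to get an inf-sup condition for $a_{q,h}$ on $X_h\times Z_h(U)$ and then cites the finite-dimensional inf-sup theorem, whereas you carry out that step by hand (projection-based extension plus Riesz representation) and apply \cref{eq:mass-dual} to $r_h$ at the end.
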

\begin{proof}
Apply \cref{eq:linear-discrete} with potential $\bar q$ and weight $\theta^{-1}$. By \cref{eq:mass-dual} and the adjoint identity $a_{q,h}(r_h,v_h)=m_h(r_h,A_{\bar q,h}v_h)$, we obtain the discrete inf-sup estimate
\begin{align*}
	s\lVert v_h\rVert_{X_h,\theta^{-1}}
	&\leq C\lVert A_{\bar q,h}v_h\rVert_{X_h,\theta^{-1}}
	\leq C\sup_{0\ne r_h\in X_h}
	\frac{|a_{q,h}(r_h,v_h)|}{\lVert r_h\rVert_{X_h,\theta}},
	\qquad v_h\in Z_h(U).
\end{align*}
The finite-dimensional inf-sup theorem (\cite[Theorem 2.34]{ern2004}) therefore gives a solution $r_h\in X_h$ with \cref{eq:right-inverse}.
\end{proof}

\subsection{Construction of discrete CGO solutions}

We now construct the discrete CGO solutions. The strategy is to take a smooth cut-off harmonic exponential, project it onto the finite-element space via a shifted Galerkin problem, and then use \cref{lem:right-inverse} to correct the local discrete defect exactly. The resulting solutions will satisfy the discrete Schr\"odinger equation in $U$ and approximate the harmonic exponential on each fixed $U_0\Subset U$.
\begin{theorem}\label{thm:cgo}
	Fix an open set \(U_0\Subset U\). There exist constants \(s_0\geq1\) and \(c,C,h_0>0\), depending only on the fixed data and the domain separations, such that for every \(0<h\leq h_0\) and every \(\zeta\in\mathbb{C}^d\) satisfying \(\zeta\cdot\zeta=0\) and \(s:=|\operatorname{Re}\zeta|\in[s_0,ch^{-2/3}]\), there exists \(u_{\zeta,h}\in V_{h,0}(D)\) satisfying
	\begin{align}\label{eq:cgo-exact}
		a_{q,h}(u_{\zeta,h},v_h)=0,\qquad v_h\in Z_h(U).
	\end{align}
	For consistent mass, the solution takes the form \(u_{\zeta,h}=e^{\zeta\cdot x}(1+r_{\zeta,h})\) on \(U_0\), with the remainder bounded by
	\begin{align}\label{eq:cgo-consistent}
		\lVert r_{\zeta,h}\rVert_{L^2(U_0)}\leq C(s^{-1}+h^2s^3).
	\end{align}
	For lumped mass, the analogous bound holds at the vertices, with the nodal norm \(\lVert r_{\zeta,h}\rVert_{X_h,1,U_0}\) from \cref{eq:restricted-nodal-norm} replacing the \(L^2(U_0)\) norm. In both cases, we have
	\begin{align}\label{eq:cgo-H1}
		\lVert u_{\zeta,h}\rVert_{H^1(U_0)}\leq Ce^{Cs}.
	\end{align}
	The constants are uniform in the direction of \(\zeta\), in complex potentials bounded by \(M\), and in the mesh family.
\end{theorem}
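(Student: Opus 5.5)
The plan follows the two-stage outline announced at the start of this section. Write \(\zeta=s\mathbf{a}+i\mathbf{b}\) with \(|\mathbf{a}|=1\), \(|\mathbf{b}|=s\), \(\mathbf{a}\cdot\mathbf{b}=0\), so \(e^{\zeta\cdot x}\) is harmonic and \(|e^{\zeta\cdot x}|=\theta:=e^{s\mathbf a\cdot x}\). Fix a cutoff \(\chi\in C_c^\infty(U)\) with \(\chi=1\) near \(\overline{U_0}\), and set \(\phi=\chi e^{\zeta\cdot x}\in H^2(D)\cap H_0^1(D)\).

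\emph{First stage.} Let \(\phi_h\in V_{h,0}(D)\) be the shifted Galerkin approximation \(a_s(\phi_h,v_h)=a_s(\phi,v_h)\), with the shift \(\kappa s^2\) and weight \(\theta^{-1}\) (or \(\theta\)) chosen so that \cref{lem:comparison} applies with \(g=(-\Delta+\kappa s^2)\phi\). The derivatives of \(e^{\zeta\cdot x}\) grow like powers of \(s\) but are \(\theta\)-bounded, so \(\lVert\theta^{-1}g\rVert_{L^2}\leq Cs^2\). \Cref{lem:comparison} with \(\alpha=\alpha^*=Ch\) then gives
\begin{align*}
\mathcal N_{\theta^{-1},D}(\phi-\phi_h)\leq Chs^2,\qquad
\lVert\theta^{-1}(\phi-\phi_h)\rVert_{L^2}\leq Ch^2s^2 .
\end{align*}
For lumped mass, the corresponding nodal bound comes from \cref{eq:weighted-mass-equivalence}.

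\emph{Second stage.} Define the defect functional \(F(v_h)=-a_{q,h}(\phi_h,v_h)\) on \(Z_h(U)\), and obtain \(r_h\) from \cref{lem:right-inverse}. The CGO solution is then \(u_{\zeta,h}=\phi_h+r_h\), which satisfies \cref{eq:cgo-exact} by construction. To bound \(F\), decompose, for \(v_h\in Z_h(U)\),
\begin{align*}
a_{q,h}(\phi_h,v_h)=\bigl(a(\phi,v_h)+c_h(q;\phi,v_h)\bigr)+\bigl(a(\phi_h-\phi,v_h)+c_h(q;\phi_h-\phi,v_h)\bigr),
\end{align*}
where the potential form is interpreted with the exact potential or its nodal version. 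The term \(a(\phi,v_h)=(-\Delta\phi,v_h)\) involves only derivatives of \(\chi\), supported in \(U\setminus U_0\). Together with the potential term it is bounded by \(C(s+M)\lVert\theta\chi'\ldots\rVert\). The crude bound is \(Cs\,\lVert\theta^{-1}\cdot\rVert\)-dual, so \(L\lesssim s\), giving \(\lVert r_h\rVert_{X_h,\theta}\lesssim 1\) rather than \(s^{-1}\).

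This is the main obstacle: the \(s^{-1}\) rate needs a sharper treatment. I would therefore modify the ansatz in the standard way, taking \(\phi=\chi e^{\zeta\cdot x}\) and solving first for the continuous correction only of the potential term. The cutoff commutator \(-2\nabla\chi\cdot\nabla e^{\zeta\cdot x}-\Delta\chi e^{\zeta\cdot x}\) is not small. Instead I would impose \cref{eq:cgo-exact} only on \(Z_h(U')\) for \(U_0\Subset U'\Subset\{\chi=1\}\), so that the commutator vanishes on test functions and \(a(\phi,v_h)=0\). The potential term is then \(\leq CM\lVert\theta\rVert_{L^2}\lVert v_h\rVert_{X_h,\theta^{-1}}\), giving \(L\leq C\) and \(\lVert r_h\rVert_{X_h,\theta}\leq Cs^{-1}\). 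Here the role of \(U\) in \cref{thm:linear-discrete} is played by \(U'\), after relabelling domains.

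The discretization term in the decomposition above is handled by Galerkin orthogonality. Since \(a_s(\phi_h-\phi,v_h)=0\), we have \(a(\phi_h-\phi,v_h)=-\kappa s^2(\phi_h-\phi,v_h)\), which is bounded by \(Cs^2\cdot h^2s^2\lVert v_h\rVert_{X_h,\theta^{-1}}\) using the weighted \(L^2\) error. Hence \(L\leq C(1+h^2s^4)\), and \(\lVert r_h\rVert_{X_h,\theta}\leq C(s^{-1}+h^2s^3)\).

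Finally, on \(U_0\) we have \(u_{\zeta,h}=e^{\zeta\cdot x}(1+r_{\zeta,h})\) with \(r_{\zeta,h}=e^{-\zeta\cdot x}\bigl((\phi_h-\phi)+r_h\bigr)\), so \(\lVert r_{\zeta,h}\rVert_{L^2(U_0)}\leq \lVert\theta^{-1}(\phi_h-\phi)\rVert+\lVert\theta^{-1}r_h\rVert\). This uses \(|e^{-\zeta\cdot x}|=\theta^{-1}\) together with the bounds with weights chosen consistently: the \(L^2\) error above is measured in \(\theta^{-1}\)-weight times \(\theta\), that is, the unweighted relative error. The result is \cref{eq:cgo-consistent}, and in the lumped case the nodal analogue follows through \cref{eq:weighted-mass-equivalence}. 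The \(H^1\) bound \cref{eq:cgo-H1} follows from \(\lVert\phi_h\rVert_{H^1}\leq Ce^{Cs}\) and from \(\lVert r_h\rVert_{L^2}\leq Ce^{Cs}\), upgraded to \(H^1\) by \cref{lem:caccioppoli} applied to the local homogeneous equation, with \(\phi_h\) contributing a bounded forcing. Uniformity in the direction of \(\zeta\) is inherited from \cref{thm:linear-discrete}.
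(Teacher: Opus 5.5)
Your final construction is essentially the paper's proof. After your relabelling, the cutoff is the paper's \(\eta\in C_c^\infty(D)\) with \(\eta=1\) near \(\overline U\), and it is this choice that gives exactness on all of \(Z_h(U)\); the remaining steps also match: the shifted Galerkin approximation controlled by \cref{lem:comparison}, Galerkin orthogonality turning the stiffness defect into \(\kappa s^2(\phi-\phi_h,v_h)\), \cref{lem:right-inverse} with \(L\leq C(1+h^2s^4)\), and \cref{lem:caccioppoli} for \cref{eq:cgo-H1}.

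You do need to fix the weights in your second stage. With your \(\theta=|e^{\zeta\cdot x}|\), the defect must be bounded by \(L\lVert v_h\rVert_{X_h,\theta}\), and \cref{lem:right-inverse} must be applied in direction \(-\mathbf a\); this gives the bound on \(\lVert r_h\rVert_{X_h,\theta^{-1}}\) that your final step actually uses. The paper avoids the issue by setting \(\theta=e^{-\operatorname{Re}\zeta\cdot x}\) from the start. In the lumped case, pass through the discrete function \(\phi_h-I_h\phi\) before invoking \cref{eq:weighted-mass-equivalence}.
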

\begin{proof}
	The construction proceeds in three steps: a shifted Galerkin approximation, an exact correction of the local defect, and the estimation of the relative error.
	
	\emph{Step 1: A shifted Galerkin approximation.}
	To construct a discrete function that locally resembles a harmonic exponential, we first use a cutoff. Choose $\eta\in C_c^\infty(D)$ equal to one near $\overline U$, and set $w=\eta e^{\zeta\cdot x}$. Then $w\in H_0^1(D)$, and $w=e^{\zeta\cdot x}$ is harmonic near $\overline U$.
	Set $\theta=e^{-\operatorname{Re}\zeta\cdot x}$ and $g=(-\Delta+\kappa s^2)w$. The identity $\zeta\cdot\zeta=0$ implies $|\operatorname{Im}\zeta|=s$, and hence
	\begin{align*}
		\theta g=e^{i\operatorname{Im}\zeta\cdot x}
		\bigl(\kappa s^2\eta-\Delta\eta-2\zeta\cdot\nabla\eta\bigr),
	\end{align*}
	so that $\lVert\theta g\rVert_{L^2(D)}\leq Cs^2$.
	Thus $w=(-\Delta_D+\kappa s^2)^{-1}g$. Let $w_h\in V_{h,0}(D)$ be its Galerkin approximation, defined by
	\begin{align*}
		(\nabla w_h,\nabla v_h)+\kappa s^2(w_h,v_h)=(g,v_h),\qquad v_h\in V_{h,0}(D).
	\end{align*}
	By \cref{lem:comparison,lem:interp}, we obtain the error bound
	\begin{align}\label{eq:seed-error}
		\lVert \theta(w-w_h)\rVert_{L^2(D)}\leq Ch^2s^2.
	\end{align}
	Since $\lVert\theta w\rVert_{L^2(D)}\leq C$ and $hs$ is bounded, \cref{eq:seed-error} also yields $\lVert\theta w_h\rVert_{L^2(D)}\leq C$.
	
	\emph{Step 2: Exact correction of the local discrete defect.}
	The Galerkin solution $w_h$ does not exactly solve the homogeneous local equation. To correct this, we first characterize its defect. For $v_h\in Z_h(U)$, Galerkin orthogonality and the harmonicity of $w$ near $\overline U$ give
	\begin{align*}
		(\nabla w_h,\nabla v_h)
		=(\nabla w,\nabla v_h)+\kappa s^2(w-w_h,v_h)
		=\kappa s^2(w-w_h,v_h).
	\end{align*}
	Adding the potential term yields the local defect
	\begin{align*}
		a_{q,h}(w_h,v_h)=\kappa s^2(w-w_h,v_h)+c_h(q;w_h,v_h).
	\end{align*}
	The first term is controlled by \cref{eq:seed-error}, and the second by $\lVert\theta w_h\rVert_{L^2(D)}\leq C$. Using \cref{eq:weighted-mass-equivalence} and the weighted Cauchy--Schwarz inequality, we obtain
	\begin{align*}
		|a_{q,h}(w_h,v_h)|\leq C(1+h^2s^4)\lVert v_h\rVert_{X_h,\theta^{-1}}.
	\end{align*}
	Applying \cref{lem:right-inverse} to the functional $F(v_h)=-a_{q,h}(w_h,v_h)$ yields a correction $z_h$ satisfying $a_{q,h}(z_h,v_h)=-a_{q,h}(w_h,v_h)$ for all $v_h\in Z_h(U)$, with the bound
	\begin{align}\label{eq:correction-bound}
		\lVert z_h\rVert_{X_h,\theta}\leq C(s^{-1}+h^2s^3).
	\end{align}
	Consequently, $u_{\zeta,h}=w_h+z_h$ exactly satisfies the local equation \cref{eq:cgo-exact}.
	
	\emph{Step 3: Relative and local energy bounds.}
	We now estimate the relative error on the inner set \(U_0\). On $U_0$, we have $w=e^{\zeta\cdot x}$, so we define
	\begin{align*}
		r_{\zeta,h}=e^{-\zeta\cdot x}\bigl(w_h-w+z_h\bigr).
	\end{align*}
	Since $|e^{-\zeta\cdot x}|=\theta$ and $s\geq1$, the bounds \cref{eq:seed-error,eq:correction-bound} immediately prove \cref{eq:cgo-consistent} for consistent mass.
	For lumped mass, we compare $w_h$ to the nodal interpolant $I_hw$, which satisfies $I_hw(x_i)=w(x_i)$ at each vertex. Smooth interpolation gives $\lVert\theta(I_hw-w)\rVert_{L^2(D)}\leq Ch^2s^2$, so \cref{eq:seed-error,eq:weighted-mass-equivalence} imply
	\begin{align*}
		\lVert w_h-I_hw\rVert_{X_h,\theta}
		\leq C\lVert\theta(w_h-I_hw)\rVert_{L^2(D)}
		\leq Ch^2s^2.
	\end{align*}
	Together with \cref{eq:correction-bound} and $|e^{-\zeta\cdot x_i}|=\theta(x_i)$, this proves the nodal remainder bound.
	Finally, to obtain the \(H^1\) estimate, choose $U_0\Subset G_1\Subset G\Subset U$. The preceding weighted bounds and $h^2s^3\leq C$ yield $\lVert u_{\zeta,h}\rVert_{L^2(G_1)}\leq Ce^{Cs}$ in either mass realization. Since $\mathcal V_h(G)\subset Z_h(U)$, the local exactness \cref{eq:cgo-exact} implies \cref{eq:local-equation} with $b=0$ and $f=0$. Applying \cref{lem:caccioppoli} then proves \cref{eq:cgo-H1}.
\end{proof}

Balancing $s^{-1}$ and $h^2s^3$ gives $s\asymp h^{-1/2}$ and a remainder of order $h^{1/2}$. This choice lies within both the linear range $s\leq ch^{-2/3}$ and the radial range $s\leq ch^{-4/5}$ for sufficiently small $h$. We use it in the inverse boundary estimates below.

\section{Quantitative discrete continuation}\label{sec:local}

In this section, we use the radial Carleman estimate \eqref{eq:radial-load} to derive quantitative continuation estimates for local variational equations. Our goal is to show that smallness on a boundary patch propagates to the interior and ultimately controls the full boundary flux. The argument proceeds in four steps. First, nodal cutoffs convert local equations into global ones driven by localized \(L^2\) loads. Second, the radial estimate yields a discrete three-ball inequality with an exponentially small mesh remainder. Third, we establish an interior propagation of smallness for dual residuals, which accommodates functionals lacking an \(L^2\) representative. Finally, we extend these results across an observed boundary face to obtain a boundary continuation estimate.

\subsection{Localization by nodal cutoffs}

To apply the global radial Carleman estimate to a function satisfying an equation only locally, we must encode its localized action as a global load. For a given cutoff $\chi$, let $\omega_h(\chi)$ denote the union of all closed mesh elements containing at least one vertex \(x_i\) with \(\chi(x_i)\ne0\). Then $\operatorname{supp}I_h(\chi v_h)\subset\omega_h(\chi)$ for any $v_h$.
The following lemma shows that the commutator produced by the cutoff can be represented by a localized \(L^2\) load \(F_\chi\). Crucially, the weighted norm of \(F_\chi\) can be estimated without inverting the global mass matrix, relying instead on the local support of the transition region.
\begin{lemma}\label{lem:cutoff-load}
	Let $\chi$ be a fixed smooth real cutoff, and suppose \cref{eq:local-equation} holds for all tests $I_h(\chi v_h)$ with $v_h\in V_{h,0}(D)$. Then there exists a broken affine function $F_\chi\in L^2(D)$ such that $z_h=I_h(\chi u_h)$ satisfies
	\begin{align}\label{eq:cutoff-equation}
		a_{b,q,h}(z_h,v_h)=(F_\chi,v_h)_{L^2(D)},\qquad v_h\in V_{h,0}(D).
	\end{align}
	If $f=0$ on $\omega_h(\chi)$, the load is supported in the transition region of $\chi$, and
	\begin{align}\label{eq:cutoff-estimate}
		\lVert \theta F_\chi\rVert_{L^2(D)}
		\leq C\bigl(\lVert \theta\nabla u_h\rVert_{L^2(\omega_{\rm tr}(\chi))}
		+\lVert \theta u_h\rVert_{L^2(\omega_{\rm tr}(\chi))}\bigr),\qquad hs\leq c,
	\end{align}
	where $\omega_{\rm tr}(\chi)$ is a fixed neighborhood containing these transition elements for sufficiently small $h$.
	The constants depend only on the fixed data and $\chi$.
	For nonzero $f$, add $C\lVert\theta f\rVert_{L^2(\omega_h(\chi))}$ to the right-hand side of \cref{eq:cutoff-estimate}.
	The assertion also holds on a fitted domain for a cutoff equal to one near its boundary, provided \(u_h\) has zero trace and \(f=0\) on $\omega_h(\chi)$.
\end{lemma}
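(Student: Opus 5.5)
We first construct \(F_\chi\) as the Riesz-type representative of the commutator functional \(v_h\mapsto a_{b,q,h}(I_h(\chi u_h),v_h)-a_{b,q,h}(u_h,I_h(\chi v_h))\), plus the forcing term \((f,I_h(\chi v_h))\). By hypothesis \(a_{b,q,h}(u_h,I_h(\chi v_h))=(f,I_h(\chi v_h))_{L^2(D)}\), so
\begin{align*}
a_{b,q,h}(z_h,v_h)=\bigl[a_{b,q,h}(I_h(\chi u_h),v_h)-a_{b,q,h}(u_h,I_h(\chi v_h))\bigr]+(f,I_h(\chi v_h))_{L^2(D)}.
\end{align*}
Both sides are antilinear functionals of \(v_h\in V_{h,0}(D)\). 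We expand them elementwise and express each contribution as a sum over vertices \(x_i\) of coefficients times \(\bar v_i\), i.e.\ as a nodal load \((\phi_i)\). We then define \(F_\chi\) as the broken affine lift of this nodal load, with respect to the unweighted lumped pairing. Using \cref{eq:broken-lift} and the biorthogonality identity, we have \((F_\chi,v_h)_{L^2(D)}=\sum_i \phi_i\bar v_i\) once the nodal values are divided by the masses \(m_i\). This gives \cref{eq:cutoff-equation} with no inversion of the global consistent mass matrix. Nodes outside \(\omega_h(\chi)\) carry \(\phi_i=0\), since there both tests vanish.

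For the support and the estimate, we show that \(\phi_i=0\) at every vertex whose patch lies where \(\chi\) is constant. This is the main point of the lemma. Suppose \(\chi\equiv1\) on the patch of \(x_i\) and its neighbours. Then \(I_h(\chi u_h)=u_h\) and \(I_h(\chi\lambda_i)=\lambda_i\) there, so the commutator vanishes exactly, and the \(f\)-term is cancelled by \(f=0\). Suppose instead \(\chi\equiv0\) on that patch. Then both terms vanish. Hence \(F_\chi\) is supported in the transition elements, which lie in a fixed neighborhood \(\omega_{\rm tr}(\chi)\) for small \(h\).

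On a transition element \(K\), we bound \(\phi_i/m_i\) by splitting the commutator into three pieces: the stiffness part \((\nabla I_h(\chi u_h),\nabla\lambda_i)-(\nabla u_h,\nabla I_h(\chi\lambda_i))\), the drift part, and the potential part. We write \(I_h(\chi w)=\chi(x_K)w+I_h((\chi-\chi(x_K))w)\) for affine \(w\). The frozen parts then cancel, up to \(\chi(x_K)\)-terms that are identical in both orderings. The remainder is controlled by \(|\nabla\chi|h_K\) times the local norms. This gives \(|\phi_i|\le C|\omega_i|\,h_K^{-1}(\|\nabla u_h\|_{L^\infty}+\|u_h\|_{L^\infty})\) on the patch \(\omega_i\). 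Dividing by \(m_i\asymp|\omega_i|\) and using inverse estimates, we obtain \(\|F_\chi\|_{L^2(K)}\le C(\|\nabla u_h\|_{L^2(\omega_K)}+\|u_h\|_{L^2(\omega_K)})\).

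The expected main obstacle is the stiffness commutator, where the \(h_K^{-1}\) from \(\nabla\lambda_i\) must be exactly compensated. We handle it by the superapproximation bound for \(I_h(\chi w)-\chi w\) (as in \cref{lem:caccioppoli}), combined with the identity \((\nabla(\chi u),\nabla\lambda)-(\nabla u,\nabla(\chi\lambda))=(u\nabla\chi,\nabla\lambda)-(\nabla u,\lambda\nabla\chi)\). In this identity the remaining \(h^{-1}\) hits only \(u\,\nabla\chi\). We absorb it by pairing against the lift \((d+2)\lambda_i-1\) and integrating by parts elementwise. If that fails, we accept the \(L^2\) term together with \(\|\nabla u_h\|\), using \(h_K^{-1}\|u_h-\bar u_K\|\le C\|\nabla u_h\|\) after noting that constants are annihilated by the commutator. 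Weights are then inserted via the local comparability \cref{eq:patch-weight-comparison}, valid for \(hs\le c\). Summing over elements with bounded overlap gives \cref{eq:cutoff-estimate}.

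A nonzero \(f\) adds \((f,I_h(\chi\lambda_i))\), bounded by \(C\|\theta f\|_{L^2(\omega_h(\chi))}\) through \cref{eq:load-stability}. For a fitted domain with \(\chi=1\) near its boundary, we proceed as before: zero trace of \(u_h\) makes \(z_h\in V_{h,0}\), and the boundary patches have constant \(\chi\), so the same cancellation applies.
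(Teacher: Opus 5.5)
Your architecture matches the paper's: represent the commutator-plus-forcing functional by an $L^2$ load, use exact cancellation where $\chi$ is locally constant, control interpolation errors by superapproximation, and freeze the weight using $hs\le c$. Building $F_\chi$ as the broken lift of the nodal load $\phi_i/m_i$ is a legitimate alternative to the paper's elementwise $P_1(K)$ Riesz representatives, and your nodal cancellation argument for the support is correct.

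The gap is in the one estimate that carries the lemma. Your frozen-coefficient computation gives $|\phi_i|\le C|\omega_i|h_K^{-1}(\lVert\nabla u_h\rVert_{L^\infty}+\lVert u_h\rVert_{L^\infty})$. Dividing by $m_i\asymp|\omega_i|$ yields only $\lVert F_\chi\rVert_{L^2(K)}\le Ch_K^{-1}(\lVert\nabla u_h\rVert_{L^2(\omega_K)}+\lVert u_h\rVert_{L^2(\omega_K)})$. Inverse estimates run in the opposite direction and cannot remove a negative power of $h_K$, so the bound you state after ``using inverse estimates'' does not follow.

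You correctly locate the culprit in $(u_h\nabla\chi,\nabla\lambda_i)$, but neither remedy works as written:
\begin{itemize}
\item Integrating by parts elementwise produces face terms $\int_{\partial K}u_h\lambda_i\,\nabla\chi\cdot n_K\,dS$ of the same size $h_K^{d-1}$ as the term you started with.
\item Pairing against the discontinuous lift $(d+2)\lambda_i-1$ would leave jump terms that do not cancel.
\item The fallback rests on a false claim: the commutator does not annihilate constants. If $u_h\equiv1$ on the patch of $x_i$, the stiffness part of $\phi_i$ equals $(\nabla I_h\chi,\nabla\lambda_i)$. On a uniform one-dimensional mesh this is $(2\chi_i-\chi_{i-1}-\chi_{i+1})/h\approx-h\chi''(x_i)\neq0$, reflecting $[-\Delta,\chi]1=-\Delta\chi$. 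Showing that this term is $O(|\omega_i|)$ rather than $O(|\omega_i|h^{-1})$ needs the very cancellation you are trying to avoid.
\end{itemize}

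The missing step is to integrate by parts over the \emph{whole} support of the test function, not element by element. Since $u_h$ is continuous, $u_h\nabla\chi\in H^1$. Since $\lambda_i\in H_0^1(\omega_i)$ (or $v_h\in H_0^1(D)$), the interior face contributions cancel in pairs and the outer ones vanish. This gives $(u_h\nabla\chi,\nabla v_h)-(\nabla u_h,v_h\nabla\chi)=(-2\nabla\chi\cdot\nabla u_h-(\Delta\chi)u_h,v_h)$, with no derivative on the test function.

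The paper does exactly this, globally, at the start of its proof. That is why \cref{eq:commutator-exact} splits into two parts:
\begin{itemize}
\item an explicit first-order commutator bounded by $C(\lVert\nabla u_h\rVert_{L^2(K)}+\lVert u_h\rVert_{L^2(K)})\lVert v_h\rVert_{L^2(K)}$;
\item superapproximation errors involving $e_\chi$.
\end{itemize}
The only remaining $h_K^{-1}$ factors then sit in $(\nabla e_\chi(u_h),\nabla v_h)$ and $(\nabla u_h,\nabla e_\chi(v_h))$, where the $h_K^2$ in \cref{eq:cutoff-product} compensates them. With this identity inserted at the level of $\phi_i$, you get $|\phi_i|\le C|\omega_i|^{1/2}(\lVert\nabla u_h\rVert_{L^2(\omega_i)}+\lVert u_h\rVert_{L^2(\omega_i)})$. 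The rest of your nodal-lift argument, including the weight freezing, the forcing term, and the fitted-domain case, then goes through.
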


\begin{proof}
	Set \(e_\chi(v)=I_h(\chi v)-\chi v\). Since \(\chi\) is real, testing \cref{eq:local-equation} with \(I_h(\chi v_h)\) and integrating by parts only on the Laplace term (so that no derivatives of \(b\) occur) yields
	\begin{align}\label{eq:commutator-exact}
		a_{b,q,h}(I_h(\chi u_h),v_h)
		&=(f,I_h(\chi v_h))
		+\bigl(-2\nabla\chi\cdot\nabla u_h-(\Delta\chi)u_h
		+(b\cdot\nabla\chi)u_h,v_h\bigr)\notag\\
		&\quad+(\nabla e_\chi(u_h),\nabla v_h)-(\nabla u_h,\nabla e_\chi(v_h))\notag\\
		&\quad+(b\cdot\nabla e_\chi(u_h),v_h)-(b\cdot\nabla u_h,e_\chi(v_h))\notag\\
		&\quad+c_h(q;I_h(\chi u_h),v_h)-c_h(q;u_h,I_h(\chi v_h)).
	\end{align}
	For consistent mass, the last line reduces to \(c_h(q;e_\chi(u_h),v_h)-c_h(q;u_h,e_\chi(v_h))\); for lumped mass, it vanishes identically because the potential operator commutes with nodal multiplication. 
	To bound the interpolation errors, we use the standard product estimate
	\begin{align}\label{eq:cutoff-product}
		\lVert e_\chi(w_h)\rVert_{L^2(K)}+h_K\lVert\nabla e_\chi(w_h)\rVert_{L^2(K)}
		\leq Ch_K^2\bigl(\lVert\nabla w_h\rVert_{L^2(K)}+\lVert w_h\rVert_{L^2(K)}\bigr),
	\end{align}
	together with the inverse estimate for affine functions. This allows us to bound the two stiffness interpolation terms in \cref{eq:commutator-exact} elementwise by \(C(\lVert\nabla u_h\rVert_{L^2(K)}+\lVert u_h\rVert_{L^2(K)})\lVert v_h\rVert_{L^2(K)}\). Similarly, the two first-order interpolation terms satisfy
	\begin{align*}
		|(b\cdot\nabla e_\chi(u_h),v_h)_{L^2(K)}|
		+|(b\cdot\nabla u_h,e_\chi(v_h))_{L^2(K)}|
		\leq CM_1h_K\bigl(\lVert\nabla u_h\rVert_{L^2(K)}+\lVert u_h\rVert_{L^2(K)}\bigr)
		\lVert v_h\rVert_{L^2(K)}.
	\end{align*}
	For \(h\leq1\), the potential errors and the explicit commutator in \cref{eq:commutator-exact} obey the same bound without the factor \(h_K\), while the forcing contributes at most \(C\lVert f\rVert_{L^2(K)}\lVert v_h\rVert_{L^2(K)}\) on elements within \(\omega_h(\chi)\). Since every commutator contribution vanishes where \(\chi\) is constant, the remaining terms are supported only in the transition region. Representing each element functional by its \(L^2(K)\) Riesz representative in \(P_1(K)\) and assembling them, we obtain the load \(F_\chi\) satisfying \cref{eq:cutoff-equation}. Finally, freezing the weight elementwise and using \(hs\leq c\) proves the estimate \cref{eq:cutoff-estimate}. The boundary cutoff case follows identically, using the zero-trace property of the tests.
\end{proof}

\subsection{Propagation of smallness}

Using the localized load, we now derive a three-ball inequality. The key algebraic ingredient is the truncated parameter optimization from \cite[Section~4.1]{FBRRS}, which balances the exponential growth and decay terms in the Carleman estimate.
\begin{lemma}\label{lem:truncated-optimization}
	Let $a,b,C_0>0$, $S\geq s_0\geq1$, and $x,D,M\geq0$. Suppose $x\leq C_0M$ and
	\begin{align*}
		x\leq C_0(e^{as}D+e^{-bs}M),\qquad s_0\leq s\leq S.
	\end{align*}
	Then, with $\vartheta=b/(a+b)$,
	\begin{align*}
		x\leq C D^\vartheta M^{1-\vartheta}+C e^{-bS}M.
	\end{align*}
	The constant $C$ depends only on $a,b,s_0,C_0$. Moreover, any finite fixed composition of such estimates retains the same form, with a positive exponent and a positive exponential constant.
\end{lemma}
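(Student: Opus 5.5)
The plan is to choose the Carleman parameter by balancing the two terms on the right-hand side, and to cap it at \(S\). When the balancing value is not admissible, I will fall back either on the a priori bound \(x\leq C_0M\) or on the endpoint \(s=S\).

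Degenerate cases come first. If \(M=0\), then \(x\leq C_0M=0\) and there is nothing to prove. If \(D=0\) and \(M>0\), taking \(s=S\) gives \(x\leq C_0e^{-bS}M\), which is already of the required form. So I will assume \(D,M>0\).

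Next, define the balancing parameter \(s_*=\frac{1}{a+b}\log(M/D)\), so that \(e^{as_*}D=e^{-bs_*}M=D^{\vartheta}M^{1-\vartheta}\). To verify this, note that \(e^{-bs_*}=(D/M)^{b/(a+b)}\), hence \(e^{-bs_*}M=D^\vartheta M^{1-\vartheta}\). The remaining argument splits into three cases.

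\emph{Case 1: \(s_0\leq s_*\leq S\).} Taking \(s=s_*\) gives \(x\leq 2C_0D^\vartheta M^{1-\vartheta}\).

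\emph{Case 2: \(s_*<s_0\).} Then \(M/D<e^{(a+b)s_0}\), so \((D/M)^\vartheta\geq e^{-bs_0}\). This yields \(M\leq e^{bs_0}D^\vartheta M^{1-\vartheta}\), and the a priori bound gives \(x\leq C_0e^{bs_0}D^\vartheta M^{1-\vartheta}\). This is the only place where the hypothesis \(x\leq C_0M\) is used, and the only place where the constant depends on \(s_0\).

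\emph{Case 3: \(s_*>S\).} Take \(s=S\). Since \(S<s_*\) and \(s\mapsto e^{as}D\) is increasing, \(e^{aS}D\leq e^{as_*}D=D^\vartheta M^{1-\vartheta}\). Therefore \(x\leq C_0(D^\vartheta M^{1-\vartheta}+e^{-bS}M)\).

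The three cases together give the estimate with \(C=C_0\max(2,e^{bs_0})\).

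For the composition claim, I will show that the output has the same shape as the input. Suppose \(x\leq C(D^{\vartheta_1}M^{1-\vartheta_1}+e^{-b_1S}M)\), and the quantity \(D\) is itself bounded by \(C'(D'^{\vartheta_2}M^{1-\vartheta_2}+e^{-b_2S}M)\), with \(M\) as a common a priori bound. Then I will use subadditivity of \(t\mapsto t^{\vartheta_1}\), which gives \((u+v)^{\vartheta_1}\leq u^{\vartheta_1}+v^{\vartheta_1}\). This produces the term \(D'^{\vartheta_1\vartheta_2}M^{1-\vartheta_1\vartheta_2}\), plus \(e^{-\vartheta_1b_2S}M\), plus \(e^{-b_1S}M\). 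The result has the exponent \(\vartheta_1\vartheta_2>0\) and the exponential constant \(\min(b_1,\vartheta_1b_2)>0\). Induction on the number of compositions finishes the argument.

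The main subtlety is Case 2. It requires the a priori bound, and it is where the constant picks up its dependence on \(s_0\). Everything else is elementary.
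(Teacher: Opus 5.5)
Your proof is correct and follows the same route as the paper's. You balance the two terms at $s_*=(a+b)^{-1}\log(M/D)$ and clip to $[s_0,S]$, falling back on $x\leq C_0M$ at the lower end and on $s=S$ at the upper end, and you handle compositions by subadditivity of $t\mapsto t^{\vartheta}$. Your Case~2 is in fact more careful than the paper's text, which claims that $s_*\leq s_0$ forces $D\geq M$. That condition only gives $M\leq e^{bs_0}D^\vartheta M^{1-\vartheta}$, as you show, and this is precisely where the dependence of $C$ on $s_0$ enters.
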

\begin{proof}
	For \(0 < D < M\), we choose \(s = \min\{S, \max\{s_0, (a+b)^{-1}\log(M/D)\}\}\). If the minimum is attained at the unconstrained optimizer \((a+b)^{-1}\log(M/D)\), substituting this value yields \(e^{as}D = e^{-bs}M = D^\vartheta M^{1-\vartheta}\), which directly produces the interpolation term. If the choice is truncated at the upper bound \(S\), we have \(e^{-bS}M \geq D^\vartheta M^{1-\vartheta}\), so the bound is controlled by \(C_0 e^{-bS}M\). If it is truncated at the lower bound \(s_0\), the condition \((a+b)^{-1}\log(M/D) \leq s_0\) implies \(D \geq M\), which is covered by the a priori estimate \(x \leq C_0M \leq C_0 D^\vartheta M^{1-\vartheta}\).
	
	The remaining boundary cases follow immediately. For \(D=0\), taking \(s=S\) directly yields \(x \leq C_0 e^{-bS}M\). For \(D \geq M\), the a priori bound \(x \leq C_0M\) already implies the desired inequality since \(M \leq D^\vartheta M^{1-\vartheta}\). Finally, the preservation of the bound under finite composition follows from the elementary subadditivity \((x+y)^\vartheta \leq x^\vartheta + y^\vartheta\) for \(0 < \vartheta < 1\).
\end{proof}

Combining \cref{eq:cutoff-equation,eq:cutoff-estimate,eq:radial-load} with the optimization lemma yields the following propagation of smallness, up to an exponentially small mesh remainder.
\begin{proposition}\label{prop:threeball}
	Let $B_1\Subset B_2\Subset B_3\Subset G$ be fixed concentric balls. There exist constants \(s_0\geq1\), \(h_0,c_+,c_-,c,C>0\), and \(\vartheta\in(0,1)\), depending only on the fixed data and the three balls, such that for \(0<h\leq h_0\) and \(s_0\leq s\leq ch^{-4/5}\), every solution of \cref{eq:local-equation} satisfies
	\begin{align}\label{eq:threeball-param}
		\lVert u_h\rVert_{L^2(B_2)}
		&\leq Ce^{c_+s}\bigl(\lVert u_h\rVert_{L^2(B_1)}+\lVert f\rVert_{L^2(B_3)}\bigr)
		+Ce^{-c_-s}\bigl(\lVert u_h\rVert_{L^2(B_3)}+\lVert f\rVert_{L^2(B_3)}\bigr).
	\end{align}
	Optimizing the parameter \(s\) further yields the interpolation estimate
	\begin{align}\label{eq:threeball}
		\lVert u_h\rVert_{L^2(B_2)}
		&\leq C\bigl(\lVert u_h\rVert_{L^2(B_1)}+\lVert f\rVert_{L^2(B_3)}\bigr)^\vartheta
		\bigl(\lVert u_h\rVert_{L^2(B_3)}+\lVert f\rVert_{L^2(B_3)}\bigr)^{1-\vartheta}\notag\\
		&\quad+Ce^{-ch^{-4/5}}\bigl(\lVert u_h\rVert_{L^2(B_3)}+\lVert f\rVert_{L^2(B_3)}\bigr).
	\end{align}
	The same estimate holds along any finite fixed chain of interior balls, with adjusted constants and exponent.
\end{proposition}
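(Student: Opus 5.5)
We apply the load form \cref{eq:radial-load} of \cref{thm:radial-discrete} to a nodal cutoff of \(u_h\), with the radial weight centered at the common center \(x_*\) of the balls, and then compare the weight on the different annuli. Set \(B_j=B(x_*,r_j)\) with \(r_1<r_2<r_3\). We fix intermediate radii \(r_2<r_2'<r_3'<r_3\) and a cutoff \(\chi\) equal to one on \(B(x_*,r_2')\) and supported in \(B(x_*,r_3')\). We also choose the observation ball \(B_*=B(x_*,r_*)\) with \(r_*<r_1\), so that \(B_*\Subset B_1\). Since \(G\Subset D\) contains \(B_3\), we may take \(D\) to be a fixed convex polytope containing \(G\); the equation is only used through tests supported in \(B_3\).

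First, for \(h\) small, every test \(I_h(\chi v_h)\) is supported in \(B_3\subset G\), so \cref{lem:cutoff-load} applies. It shows that \(z_h=I_h(\chi u_h)\in V_{h,0}(D)\) satisfies \cref{eq:cutoff-equation} with a load \(F_\chi\). This load is the commutator part, supported in the transition annulus \(\omega_{\rm tr}\subset\{r_2'-\varepsilon\le|x-x_*|\le r_3'+\varepsilon\}\), plus a forcing part bounded by \(C\lVert\theta f\rVert_{L^2(\omega_h(\chi))}\). Inserting this into \cref{eq:radial-load} gives
\begin{align*}
s^2\lVert\theta z_h\rVert_{L^2(D)}^2\le C\bigl(s^{-1}\lVert\theta F_\chi\rVert_{L^2(D)}^2+s^2\lVert\theta z_h\rVert_{L^2(B_*)}^2\bigr).
\end{align*}
Since \(\rho\) is radially decreasing, \(\theta\ge e^{s\rho(r_2)}\) on \(B_2\), and \(\theta\le e^{s\rho(r_2'-\varepsilon)}\) on the transition annulus, where \(\rho(r_2'-\varepsilon)<\rho(r_2)\) for \(\varepsilon\) small. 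On \(B_*\) we have \(\theta\le e^{s\rho(0)}=e^s\). On \(B_2\), the function \(z_h\) coincides with \(I_hu_h=u_h\) up to elements near the boundary of \(\{\chi=1\}\), which lie outside \(B_2\) for small \(h\). On \(B_*\) we have \(|z_h|\le C\) nodewise times \(|u_h|\), so \(\lVert z_h\rVert_{L^2(B_*)}\le C\lVert u_h\rVert_{L^2(B_1)}\) for small \(h\). Dividing by \(e^{s\rho(r_2)}\) yields
\begin{align*}
\lVert u_h\rVert_{L^2(B_2)}\le Ce^{s(1-\rho(r_2))}\bigl(\lVert u_h\rVert_{L^2(B_1)}+\lVert f\rVert_{L^2(B_3)}\bigr)+Ce^{-s(\rho(r_2)-\rho(r_2'-\varepsilon))}\lVert u_h\rVert_{H^1(\omega_{\rm tr})}.
\end{align*}
Here the weight on \(f\) is bounded crudely by \(e^{s}\). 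The \(H^1\) norm on the annulus is then controlled by \cref{lem:caccioppoli} through \(C(\lVert u_h\rVert_{L^2(B_3)}+\lVert f\rVert_{L^2(B_3)})\), applied with \(G_0\) a neighborhood of \(\omega_{\rm tr}\) and \(G_1=B_3\). This gives \cref{eq:threeball-param} with \(c_+=1-\rho(r_2)\) and \(c_-=\rho(r_2)-\rho(r_2'-\varepsilon)>0\).

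For \cref{eq:threeball}, we apply \cref{lem:truncated-optimization} with \(x=\lVert u_h\rVert_{L^2(B_2)}\), \(D=\lVert u_h\rVert_{L^2(B_1)}+\lVert f\rVert_{L^2(B_3)}\), \(M=\lVert u_h\rVert_{L^2(B_3)}+\lVert f\rVert_{L^2(B_3)}\) and \(S=ch^{-4/5}\). The a priori bound \(x\le M\) holds trivially. For chains of balls, we iterate using the composition statement of the lemma, noting that \(e^{-cS}\) terms combine since \(M\) is monotone along the chain.

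The main obstacle is the bookkeeping of the cutoff. We must check that the boundary nodes of \(B_*\), \(B_2\) and the transition region of \(\chi\) are affected by the nodal interpolation only in an \(O(h)\) layer, so that the fixed radius gaps absorb it for \(h\le h_0\). We must also check that the weight comparison in \cref{lem:cutoff-load} (with \(hs\le c\), which holds since \(s\le ch^{-4/5}\)) does not spoil the strict gap \(c_->0\).
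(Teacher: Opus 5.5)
Your proposal is correct and follows essentially the same route as the paper's proof: the cutoff load of \cref{lem:cutoff-load} is inserted into \cref{eq:radial-load}, with the phase centred at the common centre and the observation ball inside $B_1$, and the weight on $B_2$ is compared with the weight on the transition annulus; \cref{lem:caccioppoli} then absorbs the $H^1$ transition term, and \cref{lem:truncated-optimization} is applied with $S=ch^{-4/5}$. Your constant $c_+=1-\rho(r_2)$ coincides with the paper's $\sup_{B_3}\rho-\inf_{B_2}\rho$, and your $c_-=\rho(r_2)-\rho(r_2'-\varepsilon)$ is the paper's $\inf_{B_2}\rho-\sup_{\omega_{\rm tr}(\chi)}\rho$ written out in terms of the radii.
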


\begin{proof}
	Choose a radial cutoff \(\chi\) equal to one near \(\overline B_2\) and supported strictly inside \(B_3\), with its transition region separated from \(\overline B_2\). For sufficiently small \(h\), we have \(\omega_h(\chi)\Subset B_3\) and can choose \(\omega_{\rm tr}(\chi)\Subset B_3\setminus\overline B_2\). Center the phase \cref{eq:radial-weight} at the common center of the balls and choose its observation ball strictly inside \(B_1\). Define \(c_+=\sup_{B_3}\rho-\inf_{B_2}\rho>0\). Since \(\rho\) decreases radially, the outer transition satisfies \(c_-=\inf_{B_2}\rho-\sup_{\omega_{\rm tr}(\chi)}\rho>0\).
	
	Applying \cref{eq:radial-load} to \(I_h(\chi u_h)\) with the load from \cref{eq:cutoff-equation}, taking square roots, dividing by \(s e^{s\inf_{B_2}\rho}\), and using \cref{eq:cutoff-estimate} yields
	\begin{align*}
		\lVert u_h\rVert_{L^2(B_2)}
		&\leq Ce^{c_+s}\bigl(\lVert u_h\rVert_{L^2(B_1)}+s^{-3/2}\lVert f\rVert_{L^2(B_3)}\bigr)
		+Cs^{-3/2}e^{-c_-s}\lVert u_h\rVert_{H^1(\omega_{\rm tr}(\chi))}.
	\end{align*}
	By \cref{eq:caccioppoli} on a fixed larger annulus compactly contained in \(B_3\) and the fact that \(s\geq1\), the boundary term can be absorbed, giving the parameterized estimate \cref{eq:threeball-param}. Finally, applying \cref{lem:truncated-optimization} to \cref{eq:threeball-param} with \(S=ch^{-4/5}\) and the a priori bound \(\lVert u_h\rVert_{L^2(B_2)}\leq\lVert u_h\rVert_{L^2(B_3)}\) yields \cref{eq:threeball} with exponent \(\vartheta=c_-/(c_++c_-)\). The finite-composition assertion of the lemma directly extends this result to a fixed chain of balls.
\end{proof}

\subsection{Interior continuation with a dual residual}

In the subsequent boundary continuation argument, the local equations are driven by boundary flux functionals rather than \(L^2\) functions. To accommodate this, we establish a propagation of smallness for dual residuals. For an antilinear functional $F_h$ on $\mathcal V_h(G)$, define
\begin{align*}
	\lVert F_h\rVert_{-1,h;G}
	=\sup_{0\ne v_h\in\mathcal V_h(G)}
	\frac{|F_h(v_h)|}{\lVert v_h\rVert_{H^1(D)}},
\end{align*}
with the supremum set to zero if the test space is trivial. The proof reduces the residual to an \(L^2\) forcing via a coercive correction, enabling the application of the three-ball inequality.
\begin{theorem}\label{thm:interior-quc}
	Let $G\subset D$ be connected and open, and let $\omega,K\Subset G$ be nonempty open sets. Suppose $u_h\in V_h(D)$ and an antilinear functional $F_h$ on $\mathcal V_h(G)$ satisfy $a_{b,q,h}(u_h,v_h)=F_h(v_h)$ for all \(v_h\in\mathcal V_h(G)\). Set $D_h=\lVert u_h\rVert_{L^2(\omega)}+\lVert F_h\rVert_{-1,h;G}$ and $M_h=\lVert u_h\rVert_{L^2(G)}+\lVert F_h\rVert_{-1,h;G}$. There exist constants \(C,c,h_0>0\) and \(\vartheta\in(0,1)\), depending only on the fixed data and these sets, such that for all \(0<h\leq h_0\), we have
	\begin{align}\label{eq:interior-quc}
		\lVert u_h\rVert_{H^1(K)}
		\leq C D_h^\vartheta M_h^{1-\vartheta}+Ce^{-ch^{-4/5}}M_h.
	\end{align}
\end{theorem}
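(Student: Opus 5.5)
The plan is to remove the dual residual by a coercive local correction, propagate smallness for the resulting \(L^2\)-forced equation with \cref{prop:threeball}, and upgrade to \(H^1\) with \cref{lem:caccioppoli}.

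\emph{Step 1: the corrector.} Fix a connected open set \(G'\) with \(\omega\cup K\Subset G'\Subset G\), and a mesh-fitted or at least compactly contained region \(G''\) with \(G'\Subset G''\Subset G\). I would define \(w_h\in\mathcal V_h(G'')\) (or in \(V_{h,0}\) on the elements inside \(G''\)) by the shifted coercive problem
\[
(\nabla w_h,\nabla v_h)_{L^2(D)}+\mu(w_h,v_h)_{L^2(D)}+\int_D(b\cdot\nabla w_h)\overline{v_h}\,dx+c_h(q;w_h,v_h)=F_h(v_h),\qquad v_h\in\mathcal V_h(G''),
\]
with a fixed \(\mu\) large in terms of \(M,M_1\). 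This makes the form coercive on \(H^1\), uniformly in \(h\). Lax--Milgram then gives \(\lVert w_h\rVert_{H^1(D)}\leq C\lVert F_h\rVert_{-1,h;G}\). For lumped mass, \(H^1\)-coercivity of \(c_h\) follows from nodal norm equivalence.

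Then \(\tilde u_h=u_h-w_h\) satisfies \cref{eq:local-equation} in \(G''\) with load \(f=-\mu w_h\in L^2\). This is because \(a_{b,q,h}(w_h,v_h)=F_h(v_h)-\mu(w_h,v_h)\) on \(\mathcal V_h(G'')\subset\mathcal V_h(G)\). Moreover,
\[
\lVert f\rVert_{L^2}\leq C\lVert F_h\rVert_{-1,h;G},\qquad \lVert\tilde u_h\rVert_{L^2(\omega)}\leq CD_h,\qquad \lVert\tilde u_h\rVert_{L^2(G'')}\leq CM_h.
\]

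\emph{Step 2: chain of balls.} Since \(G'\) is connected and \(\overline{K}\) is compact, I would cover \(\overline K\) by finitely many balls \(B_2^{(j)}\). Each is connected to a ball \(B_1^{(0)}\subset\omega\) by a fixed finite chain of concentric triples \(B_1\Subset B_2\Subset B_3\Subset G''\). Consecutive triples are arranged so that each \(B_1\) of the next step lies in the \(B_2\) of the previous one. Applying \cref{eq:threeball} along each chain, with the finite-composition clause of \cref{lem:truncated-optimization}, gives
\[
\lVert\tilde u_h\rVert_{L^2(K')}\leq C D_h^{\vartheta}M_h^{1-\vartheta}+Ce^{-ch^{-4/5}}M_h
\]
on a slightly larger set \(K\Subset K'\Subset G''\). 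For the composition I rely on the a priori bound \(D_h\lesssim M_h\), so that the sums \(\lVert\tilde u_h\rVert_{L^2(B_1)}+\lVert f\rVert\) are bounded by \(CD_h\) and the outer norms by \(CM_h\). The exponentially small remainders compose because \(e^{-ch^{-4/5}}M_h\) raised to the power \(\vartheta\) and multiplied by \(M_h^{1-\vartheta}\) still has the form \(e^{-c'h^{-4/5}}M_h\).

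\emph{Step 3: back to \(u_h\) in \(H^1\).} I would apply \cref{lem:caccioppoli} to \(\tilde u_h\) on \(K\Subset K'\), which gives \(\lVert\tilde u_h\rVert_{H^1(K)}\leq C(\lVert\tilde u_h\rVert_{L^2(K')}+\lVert f\rVert)\). Adding \(\lVert w_h\rVert_{H^1}\leq C\lVert F_h\rVert_{-1,h;G}\leq CD_h\leq CD_h^\vartheta M_h^{1-\vartheta}\) yields \cref{eq:interior-quc}.

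The main obstacle I expect is Step 1. The corrector must be supported inside \(G\), so that its test space matches \(\mathcal V_h(G)\) and the dual norm, and its \(H^1\) bound must be uniform without quasi-uniformity. The issue is that \(\mathcal V_h(G'')\) is a subspace of \(V_{h,0}(D)\), and Lax--Milgram on that subspace gives exactly the needed dual-norm bound with the \(\lVert\cdot\rVert_{H^1(D)}\) denominator. I would therefore pose the problem on this subspace rather than on a fitted subdomain.
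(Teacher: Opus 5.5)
Your proposal is correct and follows essentially the paper's argument: a fixed shift $\mu$ making $a_{b,q,h}+\mu(\cdot,\cdot)_{L^2(D)}$ uniformly $H^1$-coercive, a corrector bounded by $C\lVert F_h\rVert_{-1,h;G}$, the three-ball chains of \cref{prop:threeball} for the resulting $L^2$-forced local equation, \cref{lem:caccioppoli} on the terminal sets, and adding the corrector back using $D_h\leq M_h$. The only difference is that you solve for the corrector on the subspace $\mathcal V_h(G'')$ with $G''\Subset G$, which avoids the paper's Hahn--Banach extension of $F_h$ to $V_{h,0}(D)$ and is equally valid; note that the resulting load is $f=\mu w_h$ rather than $-\mu w_h$, a harmless sign slip.
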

\begin{proof}
	By the Hahn--Banach theorem, extend \(F_h\) to \(V_{h,0}(D)\) without increasing its \(H^1\) dual norm. For either mass realization, Young's inequality and \cref{eq:weighted-mass-equivalence} with $\sigma=1$ yield the coercivity bound
	\begin{align*}
		\operatorname{Re}a_{b,q,h}(v_h,v_h)
		\geq\frac12\lVert\nabla v_h\rVert_{L^2(D)}^2-C(M+M_1^2)\lVert v_h\rVert_{L^2(D)}^2.
	\end{align*}
	We then introduce a coercive correction by fixing \(\mu_0>C(M+M_1^2)+1\) and solving the shifted problem
	\begin{align*}
		a_{b,q,h}(z_h,v_h)+\mu_0(z_h,v_h)_{L^2(D)}=F_h(v_h),
		\qquad v_h\in V_{h,0}(D).
	\end{align*}
	This problem is uniformly coercive in \(H^1(D)\) and admits a unique solution \(z_h\in V_{h,0}(D)\) satisfying \(\lVert z_h\rVert_{H^1(D)}\leq C\lVert F_h\rVert_{-1,h;G}\).
	
	Subtracting this equation from \(a_{b,q,h}(u_h,v_h)=F_h(v_h)\) yields a local equation for the difference:
	\begin{align*}
		a_{b,q,h}(u_h-z_h,v_h)=\mu_0(z_h,v_h)_{L^2(D)},\qquad v_h\in\mathcal V_h(G),
	\end{align*}
	where the right-hand side is an \(L^2\) forcing bounded by \(\lVert\mu_0z_h\rVert_{L^2(D)}\leq C\lVert F_h\rVert_{-1,h;G}\). 
	To propagate this estimate, choose a ball compactly contained in \(\omega\). The connectedness of \(G\) and compactness of \(\overline K\) ensure there exist finitely many fixed interior ball chains connecting this ball to a neighborhood of \(\overline K\). Applying \cref{eq:threeball} along these chains and \cref{eq:caccioppoli} on the terminal balls gives the estimate \cref{eq:interior-quc} for \(u_h-z_h\). Finally, adding back the correction \(z_h\) preserves the bound, since its \(H^1(K)\) norm is controlled by \(C\lVert F_h\rVert_{-1,h;G}\leq CD_h\leq CD_h^\vartheta M_h^{1-\vartheta}\), completing the proof.
\end{proof}

\subsection{Variational boundary continuation}\label{sec:boundary}\label{sec:models}

We now extend the interior propagation to the boundary. This reduction follows the continuous Cauchy stability approach; see \cite{ARRV2009}. We implement it on the fitted mesh, representing the residual and flux as functionals. The trace norm introduced below will also feature in the data discrepancy later.
Throughout this subsection, let \(d\geq2\) and let \(\Omega\subset\mathbb{R}^d\) be a fixed bounded connected Lipschitz polyhedral domain. We assume
\(\overline\Omega\subset U\Subset D\), with fixed positive separations, where \(D\) is a box.
The triangulations of \(D\) are uniformly shape regular, fit \(\partial\Omega\), and have maximum diameter \(h\); their restrictions define the spaces on \(\Omega\).
We use either mass realization from \cref{sec:mass}, with \(a_{b,q,h}\) from \cref{eq:drift-form} assembled over \(\Omega\), including boundary vertices in the mass and potential forms.
Potentials are complex and bounded in modulus by \(M\), while the drift \(b\in L^\infty(\Omega;\mathbb{C}^d)\) satisfies \(\lVert b\rVert_{L^\infty(\Omega)}\leq M_1\).
When needed on \(D\), we extend \(b\) as a bounded measurable vector field, and extend the potentials as measurable functions or nodal values, with the same bounds.

Let \(T_h=\operatorname{tr}V_h(\Omega)\) and equip it with the quotient norm
\begin{align}\label{eq:trace-norm}
	\lVert g\rVert_{T_h}=\inf_{\substack{v_h\in V_h(\Omega)\\\operatorname{tr} v_h=g}}\lVert v_h\rVert_{H^1(\Omega)}.
\end{align}
The trace theorem gives $\lVert g\rVert_{H^{1/2}(\partial\Omega)}\leq C\lVert g\rVert_{T_h}$. For the reverse inequality, apply a boundary-preserving Scott--Zhang projection \cite[Section~5]{SZ} to an \(H^1\)-lifting of \(g\) bounded by \(C\lVert g\rVert_{H^{1/2}(\partial\Omega)}\). Since \(g\in T_h\), the projected function retains the trace \(g\), and the \(H^1\) stability of the projection yields $\lVert g\rVert_{T_h}\leq C\lVert g\rVert_{H^{1/2}(\partial\Omega)}$, uniformly in \(h\).
For a relatively open \(\Gamma\subset\partial\Omega\), let \(T_{h,\Gamma}\subset T_h\) consist of traces vanishing on \(\partial\Omega\setminus\Gamma\). Primes denote continuous antilinear duals, with \(V_h(\Omega)\) equipped with the \(H^1(\Omega)\) norm and \(T_{h,\Gamma}\) with the inherited \(T_h\) norm.

The following theorem bounds the interior \(H^1\) norm by the dual residual and the flux on the observed patch.

\begin{theorem}\label{thm:boundary-quc}
	Assume the fitted geometry and mass realization of \cref{sec:models} with \(d\geq2\). Let \(\Gamma\subset\partial\Omega\) contain a fixed nonempty patch in the interior of a face. Fix a ball \(B\Subset D\) centered on that face, meeting \(\partial\Omega\) only in the interior of that face, with \(B\cap\partial\Omega\Subset\Gamma\). Let \(G\subset\Omega\) be connected and open, containing \(B\cap\Omega\), and let \(K\Subset G\) be open.
	Suppose \(u_h\in V_h(\Omega)\) has zero trace on \(\Gamma\), and that \(F_h\in V_h(\Omega)'\) and \(\beta_h\in T_{h,\Gamma}^\prime\) satisfy
	\begin{align}\label{eq:cauchy-interior-balance}
		a_{b,q,h}(u_h,v_h)=F_h(v_h),\qquad \operatorname{supp}v_h\Subset G.
	\end{align}
	For tests meeting the observed face, assume
	\begin{align}\label{eq:cauchy-local-balance}
		a_{b,q,h}(u_h,v_h)=F_h(v_h)+\langle\beta_h,\operatorname{tr}v_h\rangle,
		\qquad \operatorname{supp}v_h\Subset B\cap\overline\Omega,
	\end{align}
	where support in the latter identity is taken relative to \(\overline\Omega\). Set $D_h=\lVert F_h\rVert_{V_h(\Omega)'}+\lVert \beta_h\rVert_{T_{h,\Gamma}^\prime}$ and $M_h=\lVert u_h\rVert_{L^2(\Omega)}+D_h$.
	There exist constants \(C,c,h_0>0\) and \(\vartheta\in(0,1)\), depending only on the fixed data and these sets, such that for all \(0<h\leq h_0\),
	\begin{align}\label{eq:boundary-quc}
		\lVert u_h\rVert_{H^1(K)}\leq C D_h^\vartheta M_h^{1-\vartheta}+Ce^{-ch^{-4/5}}M_h.
	\end{align}
\end{theorem}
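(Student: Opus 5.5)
The plan is to reduce \cref{thm:boundary-quc} to the interior estimate \cref{thm:interior-quc} by extending \(u_h\) by zero across the observed face. This follows the continuous Cauchy-stability reduction of \cite{ARRV2009}. The extension lives on the fitted triangulation of \(D\), and the boundary data become a dual residual on an enlarged connected set.

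\emph{Step 1: zero extension across the face.} Define \(\tilde u_h\in V_h(D)\) by \(\tilde u_h=u_h\) on \(\Omega\) and \(\tilde u_h=0\) on \(B\setminus\overline\Omega\). This function is continuous and piecewise affine on the mesh of \(D\), because \(u_h\) has zero trace on \(\Gamma\supset B\cap\partial\Omega\) and the mesh fits \(\partial\Omega\). Away from \(B\cap\partial\Omega\) we only need \(\tilde u_h\) on \(G\) and on \(B\), so the extension elsewhere is irrelevant. Set \(\tilde G=G\cup B\), which is connected because \(G\supset B\cap\Omega\ne\varnothing\). Choose a ball \(\omega\Subset B\setminus\overline\Omega\); there \(\tilde u_h\equiv0\).

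\emph{Step 2: the residual on \(\tilde G\).} For \(v_h\in\mathcal V_h(\tilde G)\), the form \(a_{b,q,h}\) over \(D\) applied to \(\tilde u_h\) reduces to the form over \(\Omega\) applied to \(u_h\) and \(v_h|_\Omega\). This holds because the integrands vanish outside \(\Omega\). In the lumped case we need the mass weights \(m_i\) at vertices on \(B\cap\partial\Omega\): they are assembled on \(D\), while the \(\Omega\)-form uses \(\Omega\)-assembled weights. But \(\tilde u_i=0\) at those vertices, so the potential term still agrees. Define \(\tilde F_h(v_h)=a_{b,q,h}(\tilde u_h,v_h)\). Decompose \(v_h=I_h(\chi v_h)+I_h((1-\chi)v_h)\) with a fixed cutoff \(\chi\) supported in \(B\) and equal to one near \(\operatorname{supp}v_h\cap\overline{B\cap\partial\Omega}\), so that \(1-\chi\) vanishes near the face inside \(B\). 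Then \(I_h((1-\chi)v_h)|_\Omega\) is supported compactly in \(G\) (relative to \(\Omega\)), and \cref{eq:cauchy-interior-balance} applies to it. The term \(I_h(\chi v_h)|_\Omega\) is supported in \(B\cap\overline\Omega\), and \cref{eq:cauchy-local-balance} applies to it. Hence
\begin{align*}
\tilde F_h(v_h)=F_h(v_h|_\Omega)+\langle\beta_h,\operatorname{tr}I_h(\chi v_h)\rangle .
\end{align*}
The trace of \(I_h(\chi v_h)\) lies in \(T_{h,\Gamma}\). Its \(T_h\) norm is at most \(C\lVert I_h(\chi v_h)\rVert_{H^1(\Omega)}\leq C\lVert v_h\rVert_{H^1(D)}\), by the \(H^1\)-stability of \(I_h\) on products with a fixed smooth cutoff, which follows from \cref{eq:cutoff-product}. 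Therefore \(\lVert\tilde F_h\rVert_{-1,h;\tilde G}\leq CD_h\). Alternatively, the decomposition can be done directly with any partition, because both identities hold with the same \(F_h\).

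\emph{Step 3: apply interior continuation.} Apply \cref{thm:interior-quc} on \(\tilde G\), with observation set \(\omega\) and target \(K\Subset G\subset\tilde G\). Since \(\tilde u_h=0\) on \(\omega\), the data term reduces to \(\tilde D_h\leq CD_h\). The a priori term satisfies \(\tilde M_h\leq\lVert u_h\rVert_{L^2(\Omega)}+CD_h\leq CM_h\). Estimate \cref{eq:interior-quc} then gives \cref{eq:boundary-quc}, using monotonicity of \(D^\vartheta M^{1-\vartheta}\) in both arguments.

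\emph{Main obstacle.} The delicate step is Step 2. It must justify that the split test functions satisfy the support hypotheses exactly on the discrete level: \(\operatorname{supp}\Subset G\) for the interior piece and \(\Subset B\cap\overline\Omega\) for the boundary piece. This needs \(h\) small relative to the fixed separations, so that nodal interpolation enlarges supports by only \(O(h)\). In the lumped case it also needs the mass-weight consistency at the face vertices. I would first verify both points for the nodal splitting \(v_i=\chi(x_i)v_i+(1-\chi(x_i))v_i\), which is exact and linear, so no interpolation error arises.
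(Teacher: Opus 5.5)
Your route is the paper's: a nodal zero extension of \(u_h\) across the observed face, an extended residual \(F_h(v_h|_\Omega)+\langle\beta_h,\operatorname{tr}I_h(\chi v_h)\rangle\), and \cref{thm:interior-quc} applied from an exterior ball where the extension vanishes. The gap is in Step 2, and it comes from taking \(\tilde G=G\cup B\).

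\emph{Why Step 2 fails as written.} Supports of \(v_h\in\mathcal V_h(G\cup B)\) can come arbitrarily close to the rim \(\partial B\cap\partial\Omega\).

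First, no \emph{fixed} \(\chi\in C_c^\infty(B)\) equals one near \(\operatorname{supp}v_h\cap\partial\Omega\) for all such \(v_h\). Your \(\chi\) therefore depends on \(v_h\), and so does the constant in the \(H^1\) stability of \(I_h(\chi\,\cdot)\). This part alone is repairable, by bounding \(\operatorname{tr}v_h\) directly through the lifting \(v_h|_\Omega\).

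The real problem is that the splitting need not exist. Take \(G=\Omega\) and a hat function \(\phi_i\) of an interior vertex whose star touches the face only inside \(B\) but also contains points of \(\Omega\setminus\overline B\). Near the rim this is easy to realize on a shape-regular mesh. Then \(\phi_i\in\mathcal V_h(G\cup B)\), yet \(\phi_i|_\Omega\) meets neither support condition. It also cannot be written as \(v^1+v^2\) with \(\operatorname{supp}v^1\Subset G\) and \(\operatorname{supp}v^2\Subset B\cap\overline\Omega\). Indeed, \(v^1\) must vanish on every element touching \(\partial\Omega\), so \(v^2=\phi_i\) on such an element of the star. Hence \(v^2(x_i)=1\), so \(\operatorname{supp}v^2\) contains the whole star and leaves \(B\). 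So \cref{eq:cauchy-interior-balance,eq:cauchy-local-balance} do not determine \(a_{b,q,h}(\tilde u_h,\phi_i)\). Your nodal splitting \(v_i=\chi(x_i)v_i+(1-\chi(x_i))v_i\) does not help, because the obstruction is support, not interpolation error.

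\emph{The fix (this is what the paper does).}
\begin{enumerate}
\item Choose \(B'\Subset B\) and fix \(\chi\in C_c^\infty(B)\) equal to one near \(\overline{B'}\).
\item Take \(\omega\Subset B'\setminus\overline\Omega\).
\item Apply the interior theorem on a connected open \(\widetilde G\Subset G\cup B'\) containing \(\overline\omega\cup\overline K\).
\item The cover \(\{G,B'\}\) of the compact set \(\overline{\widetilde G}\) has a positive Lebesgue number. Hence, for \(h\) small, every nodal star meeting \(\widetilde G\) lies entirely in \(G\) or entirely in \(B'\).
\item Decompose \(v_h\in\mathcal V_h(\widetilde G)\) into hat functions. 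Each piece is then covered by one of the two hypotheses, and \(\chi=1\) at the boundary vertices involved.
\end{enumerate}
With \(\chi\) fixed, your bound \(\lVert\tilde F_h\rVert_{-1,h;\widetilde G}\le CD_h\) and your Step 3 go through unchanged. Working on \(B'\) rather than \(B\) also turns the vanishing of the extension outside \(\overline\Omega\) into a clean small-\(h\) statement.
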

\begin{proof}
	\emph{Step 1: Nodal extension and the extended residual.}
	Choose a smaller concentric ball \(B'\Subset B\) and a cutoff $\chi\in C_c^\infty(B)$ equal to one near \(\overline{B'}\). Define \(u_h^0\in V_{h,0}(D)\) by the nodal values of \(u_h\) on \(\overline\Omega\) and zero elsewhere. Then \(u_h^0\) agrees with \(u_h\) in \(\Omega\), and since \(\operatorname{tr} u_h=0\) on \(\Gamma\), it vanishes on $B'\setminus\overline\Omega$ for sufficiently small \(h\).
	Extending the potential and \(b\) boundedly to \(D\), we define the extended residual functional
	\begin{align*}
		\mathfrak{f}_h(v_h)=F_h(v_h|_\Omega)
		+\langle\beta_h,\operatorname{tr}_\Omega I_h(\chi v_h)\rangle,
		\qquad v_h\in V_{h,0}(D).
	\end{align*}
	For small \(h\), \(\operatorname{tr}_\Omega I_h(\chi v_h)\in T_{h,\Gamma}\). By \cref{eq:trace-norm,eq:cutoff-product}, it satisfies the uniform bound
	\begin{align*}
		|\mathfrak{f}_h(v_h)|\leq CD_h\lVert v_h\rVert_{H^1(D)}.
	\end{align*}
	
	\emph{Step 2: The extended local equation.}
	We extend the local equation across the boundary. Choose a fixed exterior ball \(\omega\Subset B'\setminus\overline\Omega\) and a connected open set \(\widetilde G\Subset G\cup B'\) containing \(\overline\omega\cup\overline K\). The open cover \(\{G,B'\}\) of \(\overline{\widetilde G}\) has a positive Lebesgue number, so we may decrease \(h_0\) to ensure every nodal star intersecting \(\widetilde G\) lies entirely in either \(G\) or \(B'\).
	Decompose a test function supported in \(\widetilde G\) into nodal basis functions. For a star in \(G\), apply \cref{eq:cauchy-interior-balance} (its trace contribution vanishes). For a star in \(B'\), apply \cref{eq:cauchy-local-balance} to its restriction to \(\Omega\), using \(\chi=1\) near \(B'\).
	The stiffness, drift, and consistent potential integrals agree with those on \(\Omega\): the mesh is fitted, and both \(u_h^0\) and its gradient vanish on the exterior elements near the observed face. For lumped mass, the differing masses at face vertices do not contribute since \(u_h=0\) there. Summing these nodal identities yields
	\begin{align*}
		a_{b,q,h}(u_h^0,v_h)=\mathfrak{f}_h(v_h),
		\qquad v_h\in\mathcal V_h(\widetilde G).
	\end{align*}
	
	\emph{Step 3: Application of interior continuation.}
	We now apply the interior result. We have $\lVert u_h^0\rVert_{L^2(\widetilde G)}\leq\lVert u_h\rVert_{L^2(\Omega)}$ and $u_h^0=0$ on \(\omega\). Since \(\lVert\mathfrak f_h\rVert_{-1,h;\widetilde G}\leq CD_h\), applying the interior continuation theorem \cref{thm:interior-quc} directly yields the desired boundary estimate \cref{eq:boundary-quc}.
\end{proof}

For nonzero Dirichlet data, define the restriction space
$\widehat T_h(\Gamma)=\{\operatorname{tr}v_h|_\Gamma:v_h\in V_h(\Omega)\}$ equipped with the quotient norm
\begin{align*}
	\lVert g\rVert_{\widehat T_h(\Gamma)}
	=\inf_{\substack{v_h\in V_h(\Omega)\\\operatorname{tr}v_h|_\Gamma=g}}
	\lVert v_h\rVert_{H^1(\Omega)}.
\end{align*}
Note that this space differs from $T_{h,\Gamma}$, whose elements vanish outside \(\Gamma\).
Lifting this trace reduces the next estimate to \cref{thm:boundary-quc}.
\begin{corollary}\label{cor:cauchy-data}
	In the setting of \cref{thm:boundary-quc}, we drop the zero-trace assumption on \(u_h\) and suppose \cref{eq:cauchy-interior-balance,eq:cauchy-local-balance} hold. Then \cref{eq:boundary-quc} holds with
	\begin{align*}
		D_h=\lVert \operatorname{tr}u_h|_\Gamma\rVert_{\widehat T_h(\Gamma)}
		+\lVert F_h\rVert_{V_h(\Omega)'}+\lVert \beta_h\rVert_{T_{h,\Gamma}^\prime},
	\end{align*}
	and $M_h=\lVert u_h\rVert_{L^2(\Omega)}+D_h$.
\end{corollary}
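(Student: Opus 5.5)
The idea is to subtract a discrete lifting of the Dirichlet data on \(\Gamma\) and apply \cref{thm:boundary-quc} to the difference. Let \(g=\operatorname{tr}u_h|_\Gamma\in\widehat T_h(\Gamma)\). By definition of the quotient norm (the infimum is attained in finite dimensions), there is \(\ell_h\in V_h(\Omega)\) with \(\operatorname{tr}\ell_h|_\Gamma=g\) and \(\lVert\ell_h\rVert_{H^1(\Omega)}\leq 2\lVert g\rVert_{\widehat T_h(\Gamma)}\). Set \(w_h=u_h-\ell_h\). Then \(w_h\) has zero trace on \(\Gamma\); since the trace is continuous piecewise affine, it vanishes at all boundary vertices in \(\overline\Gamma\), which is what Step~1 of the proof of \cref{thm:boundary-quc} actually uses.

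Next I would check that \(w_h\) satisfies \cref{eq:cauchy-interior-balance,eq:cauchy-local-balance} with the modified functional
\begin{align*}
\widetilde F_h(v_h)=F_h(v_h)-a_{b,q,h}(\ell_h,v_h),\qquad v_h\in V_h(\Omega),
\end{align*}
and with the same \(\beta_h\). Both identities are linear in the state, so they transfer directly. The form \(a_{b,q,h}\) on \(\Omega\) is bounded on \(V_h(\Omega)\times V_h(\Omega)\) in the \(H^1\) norm, uniformly in \(h\). For the stiffness and drift parts this is Cauchy--Schwarz with \(\lVert b\rVert_{L^\infty}\leq M_1\). For the potential part it follows from \(|q|\leq M\), together with \cref{eq:weighted-mass-equivalence} with \(\sigma=1\) in the lumped case (boundary vertices included, which is harmless by local norm equivalence). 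Hence
\begin{align*}
\lVert\widetilde F_h\rVert_{V_h(\Omega)'}\leq\lVert F_h\rVert_{V_h(\Omega)'}+C\lVert g\rVert_{\widehat T_h(\Gamma)},
\end{align*}
so the data quantity \(\widetilde D_h\) for \(w_h\) is bounded by \(C D_h\) with the \(D_h\) of the corollary. Also \(\widetilde M_h=\lVert w_h\rVert_{L^2(\Omega)}+\widetilde D_h\leq \lVert u_h\rVert_{L^2(\Omega)}+CD_h\leq CM_h\).

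Applying \cref{thm:boundary-quc} to \(w_h\) gives
\begin{align*}
\lVert w_h\rVert_{H^1(K)}\leq C\widetilde D_h^\vartheta\widetilde M_h^{1-\vartheta}+Ce^{-ch^{-4/5}}\widetilde M_h\leq CD_h^\vartheta M_h^{1-\vartheta}+Ce^{-ch^{-4/5}}M_h .
\end{align*}
Here I use that \(x\mapsto x^\vartheta y^{1-\vartheta}\) is monotone in each argument. Finally,
\begin{align*}
\lVert u_h\rVert_{H^1(K)}\leq\lVert w_h\rVert_{H^1(K)}+2\lVert g\rVert_{\widehat T_h(\Gamma)},
\end{align*}
and \(\lVert g\rVert_{\widehat T_h(\Gamma)}\leq D_h=D_h^\vartheta D_h^{1-\vartheta}\leq D_h^\vartheta M_h^{1-\vartheta}\) because \(D_h\leq M_h\). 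This gives \cref{eq:boundary-quc} for \(u_h\).

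The only point needing care is the first step: the zero-trace hypothesis in \cref{thm:boundary-quc} must hold exactly, not just approximately, because the nodal extension \(u_h^0\) has to vanish on \(B'\setminus\overline\Omega\). The lifting gives exact trace matching on \(\Gamma\), so this holds. Everything else is linear bookkeeping.
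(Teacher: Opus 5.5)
Your proposal is correct and follows the paper's proof essentially step for step. Like the paper, you subtract a (near-)minimum-norm discrete lifting of \(\operatorname{tr}u_h|_\Gamma\), absorb \(a_{b,q,h}(\ell_h,\cdot)\) into a modified residual using uniform continuity of the form, apply \cref{thm:boundary-quc} to the difference, and add the lifting back using \(D_h\leq M_h\); your checks that \(\widetilde M_h\leq CM_h\) and that the interpolation term is monotone in each argument make explicit the bookkeeping the paper leaves implicit.
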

\begin{proof}
Let \(e_h\) be a minimum-norm lifting of the measured Dirichlet trace. The difference \(u_h-e_h\) has zero trace on \(\Gamma\) and satisfies \cref{eq:cauchy-interior-balance,eq:cauchy-local-balance} with the same \(\beta_h\) and a modified residual \(\widetilde F_h=F_h-a_{b,q,h}(e_h,\cdot)\). By the continuity of the form \(a_{b,q,h}\), this new residual satisfies
\begin{align*}
	\lVert \widetilde F_h\rVert_{V_h(\Omega)'}\leq\lVert F_h\rVert_{V_h(\Omega)'}+C\lVert e_h\rVert_{H^1(\Omega)}\leq CD_h.
\end{align*}
Applying \cref{thm:boundary-quc} to \(u_h-e_h\) and then adding back \(e_h\) (using \(D_h\leq M_h\)) yields the desired estimate.
\end{proof}

\section{Calder\'on stability from two boundary patches}\label{sec:inverse}

In this section, we establish our main inverse stability result. Throughout this section, let \(d\geq3\) and set \(b=0\). We adopt the geometry, potentials, and trace spaces of \cref{sec:models}, with $m_h$ and $a_{q,h}$ defined as in \cref{eq:consistent,eq:lumped} and \cref{eq:aq}, respectively. Our strategy combines the discrete CGO solutions, the full-data Fourier estimate, and the boundary continuation estimates developed above. We proceed in four steps:
\begin{enumerate}
	\item We first define the variational Dirichlet-to-Neumann map and derive a full-data Fourier estimate via the CGO solutions and the Alessandrini identity.
	\item Second, we convert this low-frequency control into a full-data \(H^{-r}\) stability estimate via a standard frequency-splitting argument.
	\item Third, using reciprocity and the boundary continuation estimates, we localize both Dirichlet inputs and flux observations to independently prescribed, possibly disjoint patches.
	\item Finally, we show that for coefficient differences in a fixed finite-dimensional space, the mesh remainder can be absorbed, yielding mesh-uniform Lipschitz stability.
\end{enumerate}

\subsection{Variational DtN maps and coefficient realization}
We begin by defining the discrete forward problem and its associated boundary map. Let \(T_h=\operatorname{tr}V_h(\Omega)\) be the trace space equipped with the quotient norm \eqref{eq:trace-norm}. Unique solvability is essential for the boundary map to be well-defined. Furthermore, boundary localization requires uniform stability with respect to the mesh and the potential. We encode these requirements in the following assumption.
\begin{assumption}\label{ass:forward}
	For every potential under consideration, the only $v\in V_{h,0}(\Omega)$ satisfying $a_{q,h}(v,w)=0$ for all $w\in V_{h,0}(\Omega)$ is $v=0$.
	If boundary localization is used, we additionally assume
	\begin{align}\label{eq:dir-infsup}
		\lVert v\rVert_{H^1(\Omega)}\leq C_D
		\sup_{0\ne w\in V_{h,0}(\Omega)}
		\frac{|a_{q,h}(v,w)|}{\lVert w\rVert_{H^1(\Omega)}},\qquad v\in V_{h,0}(\Omega),
	\end{align}
	with $C_D$ independent of $h$ and the potential.
\end{assumption}
Since the discrete Dirichlet matrix is square, the uniqueness condition is equivalent to its invertibility, and hence to unique solvability for every boundary trace.
Potentials with $\operatorname{Re}q\geq0$ satisfy \cref{eq:dir-infsup} by Poincar\'e's inequality.

For \(g\in T_h\), let \(u_h[q,g]\) solve
\begin{align}\label{eq:forward}
	\operatorname{tr} u_h=g,\qquad a_{q,h}(u_h,v_h)=0\quad(v_h\in V_{h,0}(\Omega)).
\end{align}
The associated variational Dirichlet-to-Neumann map is defined by
\begin{align}\label{eq:dtn}
	\langle \Lambda_h(q)g,k\rangle=a_{q,h}(u_h[q,g],v_h),\qquad k\in T_h,
\end{align}
where $v_h\in V_h(\Omega)$ is any lifting with trace \(k\). By \cref{eq:forward}, this definition is independent of the lifting.
For a relatively open \(\Gamma\subset\partial\Omega\), define the boundary data discrepancy
\begin{align*}
	\delta_\Gamma(q_1,q_2)=
	\sup_{\substack{g\in T_h,\ k\in T_{h,\Gamma}\\
			\lVert g\rVert_{T_h}=\lVert k\rVert_{T_h}=1}}
	\bigl|\langle [\Lambda_h(q_1)-\Lambda_h(q_2)]g,k\rangle\bigr|.
\end{align*}
For a fixed pair $q_1,q_2$, we abbreviate $\delta_\Gamma=\delta_\Gamma(q_1,q_2)$ and $\delta_\partial=\delta_{\partial\Omega}(q_1,q_2)$. Thus, Dirichlet inputs are allowed on the whole boundary, while flux observations are restricted to \(\Gamma\).
In nodal coordinates, $\Lambda_h(q)$ is the Schur complement of the interior block of the assembled matrix for $a_{q,h}$, so the boundary data match those of the original discrete scheme.

To compare potentials as functions, we define the discrete realization \(\mathcal{Q}_h\). For consistent mass, \(\mathcal{Q}_hq\) is \(q\) on \(\Omega\), extended by zero. For lumped mass, we choose a measurable nodal partition \(\{D_i\}\) of \(\Omega\) with \(|D_i|=m_i\) and \(D_i\subset B(x_i,Ch)\), and set \(\mathcal{Q}_hq=q_i\) on \(D_i\) (and zero outside); barycentric dual cells provide such a partition.
For a Cartesian grid with zero boundary nodal potentials, centered interior dual cubes may be used instead, with zero value on the remaining boundary strip.
In either case, \(\lVert \mathcal{Q}_hq\rVert_{L^\infty(\mathbb{R}^d)}\leq M\) and the support lies in a fixed bounded set.

We note a fundamental limitation of the consistent mass realization. In that case, the discrete operator depends on \(q\) only through the finitely many moments \(\int_\Omega q\phi_i\phi_j\,dx\), where \(\{\phi_i\}\) is the nodal basis of \(V_h(\Omega)\). Since the map from \(L^\infty(\Omega)\) to these moments has an infinite-dimensional kernel, fixed-mesh boundary data cannot uniquely determine an arbitrary bounded potential. Consequently, the estimates below for unrestricted bounded potentials retain a mesh remainder even for identical data. For lumped mass, \(\mathcal{Q}_hq\) instead represents the finite-dimensional nodal potential.

All negative Sobolev norms below are taken on \(\mathbb{R}^d\) after this zero extension.

\subsection{Alessandrini identity and CGO Fourier probing}

We now use the CGO solutions from \cref{sec:cgo} to probe the Fourier transform of the potential difference. The key tool is the variational Alessandrini identity, which links the boundary discrepancy to an interior integral involving the potential difference and the two solutions.
\begin{lemma}\label{lem:alessandrini}
	Assume \cref{ass:forward} and let $u_j=u_h[q_j,g_j]$. Then
	\begin{align}\label{eq:alessandrini}
		\langle [\Lambda_h(q_1)-\Lambda_h(q_2)]g_1,\overline{g_2}\rangle
		=c_h(q_1-q_2;u_1,\overline{u_2}).
	\end{align}
	If \cref{eq:dir-infsup} holds, there exists a constant \(C>0\), depending only on the fixed data and \(C_D\), such that
	\begin{align}\label{eq:forward-bound}
		\lVert u_h[q,g]\rVert_{H^1(\Omega)}\leq C\lVert g\rVert_{T_h}
	\end{align}
	and $\lVert \Lambda_h(q)\rVert_{T_h\to T_h^\prime}\leq C$.
\end{lemma}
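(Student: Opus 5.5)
For the Alessandrini identity \cref{eq:alessandrini}, I would work directly from the variational definition \cref{eq:dtn} and exploit the freedom in choosing liftings. Write \(u_j=u_h[q_j,g_j]\). By \cref{eq:dtn} with the lifting \(v_h=\overline{u_2}\), the pairing becomes
\begin{align*}
\langle\Lambda_h(q_1)g_1,\overline{g_2}\rangle=a_{q_1,h}(u_1,\overline{u_2}).
\end{align*}
Here I use that \(\overline{u_2}\in V_h(\Omega)\) and has trace \(\overline{g_2}\); conjugation commutes with nodal interpolation and with traces. For the second map I would use the symmetry of the bilinear (not sesquilinear) structure. The sesquilinear form satisfies \(a_{q,h}(u,\overline v)=\int\nabla u\cdot\nabla v+c_h(q;u,\overline v)\), and in both mass realizations the expression \(c_h(q;u,\overline v)=\int q\,uv\) or \(\sum_i m_iq_iu_iv_i\) is symmetric in \((u,v)\). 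Hence \(a_{q,h}(u,\overline v)=a_{q,h}(v,\overline u)\).

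With this symmetry, \(\langle\Lambda_h(q_2)g_1,\overline{g_2}\rangle\) can be computed with the lifting \(\overline{u_2}\) of \(\overline{g_2}\). Write \(w=u_h[q_2,g_1]\) and use the lifting independence to obtain
\begin{align*}
a_{q_2,h}(w,\overline{u_2})=a_{q_2,h}(u_2,\overline w)=a_{q_2,h}(u_2,\overline{u_1})+a_{q_2,h}(u_2,\overline{w-u_1}).
\end{align*}
The last term vanishes because \(w-u_1\in V_{h,0}(\Omega)\) and \(u_2\) solves \cref{eq:forward}. Symmetrizing once more gives \(a_{q_2,h}(u_1,\overline{u_2})\). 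Subtracting the two expressions, the stiffness parts cancel and \(c_h(q_1-q_2;u_1,\overline{u_2})\) remains, using linearity of \(c_h\) in \(q\). The only care needed is in the bookkeeping of conjugations so that the pairing convention (antilinear in the second slot) matches.

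For \cref{eq:forward-bound}, I would take a near-minimal lifting \(e_h\in V_h(\Omega)\) of \(g\) with \(\lVert e_h\rVert_{H^1}\leq2\lVert g\rVert_{T_h}\). The difference \(v=u_h[q,g]-e_h\in V_{h,0}(\Omega)\) satisfies \(a_{q,h}(v,w)=-a_{q,h}(e_h,w)\). By \cref{eq:dir-infsup} and the uniform continuity of \(a_{q,h}\) on \(H^1\), we get \(\lVert v\rVert_{H^1}\leq C_DC\lVert e_h\rVert_{H^1}\). For lumped mass, this continuity requires the nodal norm equivalence \cref{eq:weighted-mass-equivalence} with \(\sigma=1\). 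The triangle inequality then gives \cref{eq:forward-bound}.

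Finally, to bound the map norm, estimate \(|\langle\Lambda_h(q)g,k\rangle|=|a_{q,h}(u_h[q,g],v_h)|\leq C\lVert g\rVert_{T_h}\lVert v_h\rVert_{H^1}\) and take the infimum over liftings \(v_h\) of \(k\). I expect no real obstacle here. The most delicate point is checking the symmetry of the lumped potential form when boundary vertices are included, and that is immediate from its diagonal structure.
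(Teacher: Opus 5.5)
Your proposal is correct and follows essentially the same route as the paper. You use the lifting \(\overline{u_2}\) of \(\overline{g_2}\) and the bilinear symmetry \(a_{q,h}(u,\overline v)=a_{q,h}(v,\overline u)\) to replace \(u_h[q_2,g_1]\) by \(u_1\) in the \(q_2\)-pairing; then a (near-)minimum-norm lifting with \cref{eq:dir-infsup} gives \cref{eq:forward-bound}, and the infimum over liftings gives the map bound. The only differences are cosmetic: you split off \(a_{q_2,h}(u_2,\overline{w-u_1})\) instead of the paper's \(a_{q_2,h}(w_h,\overline{u_2})\) with \(w_h=u_1-u_h[q_2,g_1]\), and you take a near-minimal lifting, whereas a minimal one exists in finite dimensions.
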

\begin{proof}
	We first establish the Alessandrini identity using the bilinear symmetry of the forms. Observe that the potential and stiffness forms satisfy
	\begin{align}\label{eq:bilinear-symmetry}
		a_{q,h}(u,\overline v)=a_{q,h}(v,\overline u).
	\end{align}
	By the definition of the DtN map in \cref{eq:dtn}, we can use $\overline{u_2}$ as a lifting of $\overline{g_2}$, which gives
	$\langle \Lambda_h(q_1)g_1,\overline{g_2}\rangle = a_{q_1,h}(u_1,\overline{u_2})$.
	To evaluate the corresponding term for $q_2$, set $w_h=u_1-u_h[q_2,g_1]$, which has zero trace. By the symmetry \cref{eq:bilinear-symmetry} and the fact that \(u_2\) is a solution for \(q_2\), we have
	\begin{align*}
		a_{q_2,h}(w_h,\overline{u_2})=a_{q_2,h}(u_2,\overline{w_h})=0.
	\end{align*}
	This implies $a_{q_2,h}(u_h[q_2,g_1],\overline{u_2}) = a_{q_2,h}(u_1,\overline{u_2})$. 
	Subtracting the pairings for $q_1$ and $q_2$, and using \cref{eq:bilinear-symmetry} on both terms, we obtain
	\begin{align*}
		\langle [\Lambda_h(q_1)-\Lambda_h(q_2)]g_1,\overline{g_2}\rangle
		= a_{q_1,h}(u_2,\overline{u_1})-a_{q_2,h}(u_2,\overline{u_1})
		= c_h(q_1-q_2;u_1,\overline{u_2}),
	\end{align*}
	which proves \cref{eq:alessandrini}. The conjugation on the second trace is precisely what converts the sesquilinear identity into the bilinear product required for Fourier probing, whose right-hand side reads $\int_\Omega(q_1-q_2)u_1u_2\,dx$ or $\sum_i m_i(q_{1,i}-q_{2,i})u_{1,i}u_{2,i}$ depending on the mass realization.
	
	To prove the forward bounds \cref{eq:forward-bound}, we use a standard lifting argument. Subtracting a minimum-norm lifting in \cref{eq:forward} and applying the inf-sup condition \cref{eq:dir-infsup} directly yields the \(H^1\) estimate. Substituting this estimate into \cref{eq:dtn} and using the boundedness of $a_{q,h}$ then gives the operator norm bound $\lVert\Lambda_h(q)\rVert_{T_h\to T_h'}\leq C$.
\end{proof}
We use the Fourier transform $\widehat Q(\xi)=\int_{\mathbb{R}^d}Q(x)e^{i\xi\cdot x}\,dx$, with normalization constants absorbed into \(C\).
Since \(\overline\Omega\subset U\), the zero extension of any test function in \(V_{h,0}(\Omega)\) belongs to \(Z_h(U)\). Elementwise assembly therefore shows that the restriction of a CGO solution to \(\Omega\) satisfies \cref{eq:forward}. By uniqueness, it coincides with the Dirichlet solution corresponding to its boundary trace. According to \cref{eq:cgo-H1,eq:trace-norm}, this trace has norm at most \(Ce^{Cs}\). Substituting two such solutions into the Alessandrini identity \cref{eq:alessandrini} yields control over the low-frequency Fourier transform of the potential difference.

\begin{lemma}\label{lem:fourier-full}
	For $Q=\mathcal{Q}_h(q_1-q_2)$, set $P=\lVert Q\rVert_{L^\infty(\mathbb{R}^d)}$. There exist constants $s_0,c,C,h_0>0$ such that for all $0<h\leq h_0$, $s_0\leq s\leq ch^{-1/2}$, and \(|\xi|\leq2s\), we have
	\begin{align}\label{eq:fourier-full}
		|\widehat Q(\xi)|\leq
		C\bigl(\delta_\partial e^{Cs}+P(s^{-1}+h^2s^3+hs)\bigr).
	\end{align}
	The term $hs$ is absent for consistent mass.
\end{lemma}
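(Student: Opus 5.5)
Following the standard CGO argument, we choose, for a given $\xi\in\mathbb R^d$ with $|\xi|\leq 2s$, a pair of complex vectors $\zeta_1,\zeta_2\in\mathbb C^d$ with $\zeta_j\cdot\zeta_j=0$, $\zeta_1+\zeta_2=i\xi$, and $|\operatorname{Re}\zeta_1|=|\operatorname{Re}\zeta_2|=s$. We take
\begin{align*}
\zeta_{1,2}=\pm s\mathbf a+i\bigl(\tfrac{\xi}{2}\pm\mu\mathbf b\bigr),
\end{align*}
where $\mathbf a,\mathbf b,\xi$ are mutually orthogonal, $|\mathbf a|=|\mathbf b|=1$ (possible since $d\geq3$), and $\mu=(s^2-|\xi|^2/4)^{1/2}$, which is real because $|\xi|\leq2s$. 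Since $ch^{-1/2}\leq ch^{-2/3}$, \cref{thm:cgo} applies with $U_0\Supset\overline\Omega$; this requires choosing $U_0\Subset U$ with $\overline\Omega\subset U_0$, which the separations permit. It gives $u_{j}=u_{\zeta_j,h}$, whose restrictions to $\Omega$ are Dirichlet solutions for $q_j$ with traces $g_j$ satisfying $\lVert g_j\rVert_{T_h}\leq Ce^{Cs}$. We apply \cref{lem:alessandrini} with $g_1$ and $\overline{g_2}$; the latter is also a trace whose solution is $u_2$ conjugated appropriately, and we arrange the conjugation so that the right-hand side becomes the bilinear $c_h(q_1-q_2;u_1,u_2)$. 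The left side is bounded by $\delta_\partial\lVert g_1\rVert_{T_h}\lVert g_2\rVert_{T_h}\leq C\delta_\partial e^{Cs}$.

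For the right side in the consistent-mass case, we write $u_1u_2=e^{i\xi\cdot x}(1+r_1)(1+r_2)$ on $\Omega$. Then
\begin{align*}
\int_\Omega Qu_1u_2\,dx=\widehat Q(\xi)+\int_\Omega Qe^{i\xi\cdot x}(r_1+r_2+r_1r_2)\,dx.
\end{align*}
The error is at most $P(\lVert r_1\rVert_{L^1}+\lVert r_2\rVert_{L^1}+\lVert r_1\rVert_{L^2}\lVert r_2\rVert_{L^2})\leq CP(s^{-1}+h^2s^3)$ by \cref{eq:cgo-consistent}. The quadratic term is harmless because $s^{-1}+h^2s^3\leq C$ in the range $s\leq ch^{-1/2}$.

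The lumped-mass case is the main obstacle. There the identity gives $\sum_i m_iQ_iu_{1,i}u_{2,i}$, with $u_{j,i}=e^{\zeta_j\cdot x_i}(1+r_{j,i})$. The nodal remainders are handled exactly as above by Cauchy--Schwarz in the nodal norm $\lVert\cdot\rVert_{X_h,1,U_0}$. This leaves $\sum_i m_iQ_ie^{i\xi\cdot x_i}$, which we compare with $\widehat Q(\xi)=\sum_i q_i\int_{D_i}e^{i\xi\cdot x}dx$ using $|D_i|=m_i$, $D_i\subset B(x_i,Ch)$, and $|e^{i\xi\cdot x}-e^{i\xi\cdot x_i}|\leq C|\xi|h\leq Chs$. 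Summing with $\sum m_i\leq C$ yields the extra $CPhs$ term. For the Cartesian variant, the boundary strip is handled by the vanishing boundary nodal potentials.

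A point needing care is whether boundary vertices belong to $U_0$ in the nodal estimate; we address it by taking $U_0\Supset\overline\Omega$. Another is that $\mathcal Q_h$ has support in $\overline\Omega$, so the Fourier integral is over $\Omega$. Combining these bounds and taking $s_0$, $h_0$ from \cref{thm:cgo} gives \cref{eq:fourier-full}.
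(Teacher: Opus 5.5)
Your proposal is correct and follows the paper's proof essentially step for step. You use the same choice of $\zeta_{1,2}$ and insert the CGO solutions of \cref{thm:cgo} into the Alessandrini identity. You expand $u_1u_2=e^{i\xi\cdot x}(1+r_1+r_2+r_1r_2)$ and control the quadratic term via $s^{-1}+h^2s^3\leq C$. For lumped mass you compare the nodal sum with $\widehat Q(\xi)$ on the cells $D_i$, which gives the $CPhs$ term. Your explicit choice $\overline\Omega\subset U_0$ also makes precise a point the paper leaves implicit.

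One wording fix: apply the lemma with $g_2$ itself, the trace of $u_2$. Its pairing against $\overline{g_2}$ already gives $c_h(q_1-q_2;u_1,\overline{u_2})=\int_\Omega(q_1-q_2)u_1u_2\,dx$, so no conjugated solution is involved. Indeed, for complex $q_2$, $u_h[q_2,\overline{g_2}]\neq\overline{u_2}$ in general.
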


\begin{proof}
	Choose orthonormal vectors \(\mathbf{a}, \mathbf{b} \in \mathbb{R}^d\) perpendicular to \(\xi\) (possible since \(d \geq 3\)), and set
	\begin{align*}
		\zeta_1&=s\mathbf{a}+i\bigl(\xi/2+\sqrt{s^2-|\xi|^2/4}\,\mathbf{b}\bigr), \\
		\zeta_2&=-s\mathbf{a}+i\bigl(\xi/2-\sqrt{s^2-|\xi|^2/4}\,\mathbf{b}\bigr).
	\end{align*}
	At \(\xi=0\), any orthonormal pair may be chosen. These vectors satisfy \(\zeta_j\cdot\zeta_j=0\) and \(\zeta_1+\zeta_2=i\xi\).
	Substituting the corresponding CGO solutions from \cref{thm:cgo} into the Alessandrini identity \cref{eq:alessandrini}, the boundary contribution is immediately bounded by \(C\delta_\partial e^{Cs}\).
	
	For consistent mass, write \(u_j=e^{\zeta_j\cdot x}(1+r_j)\) with remainders \(r_j=r_{\zeta_j,h}\). Since \(u_1u_2=e^{i\xi\cdot x}(1+r_1+r_2+r_1r_2)\), the error terms in \cref{eq:alessandrini} involve \((q_1-q_2)r_1\), \((q_1-q_2)r_2\), and \((q_1-q_2)r_1r_2\). By the bound \cref{eq:cgo-consistent} and the Cauchy--Schwarz inequality, their total contribution is bounded by \(CP(s^{-1}+h^2s^3)\), where we used \(s^{-1}+h^2s^3\leq C\) to control the product term \(r_1r_2\).
	For lumped mass, the same calculation applies to the nodal remainders from \cref{thm:cgo}. In this case, the difference between the discrete sum \(\sum_i m_i(q_{1,i}-q_{2,i})e^{i\xi\cdot x_i}\) and the true Fourier transform \(\widehat Q(\xi)\) is bounded by
	\begin{align*}
		P\sum_i\int_{D_i}|e^{i\xi\cdot x}-e^{i\xi\cdot x_i}|\,dx
		\leq CPh|\xi|\leq CPhs.
	\end{align*}
	Combining these estimates proves the asserted bound \cref{eq:fourier-full}.
\end{proof}

\subsection{Full-data stability}

Building on this low-frequency estimate, we now establish the full-data stability bound in negative Sobolev spaces. The stability modulus depends on the Sobolev exponent \(r\) and is described by
\begin{align*}
	\Phi_r(t)=
	\begin{cases}
		t^{2r/d},&0<r<d/2,\\
		t\sqrt{\log(2+t^{-1})},&r=d/2,\\
		t,&r>d/2,
	\end{cases}
	\qquad t>0.
\end{align*}
Here and below, we interpret $[\log(2+\delta^{-1})]^{-1}$ as zero when $\delta=0$.
The following full-data estimate provides the coefficient bound required for the boundary localization in \cref{thm:patch}.

\begin{theorem}\label{thm:full}
	Assume the geometric hypotheses of \cref{sec:models} and the uniqueness condition in \cref{ass:forward}. For every $r>0$, there exist constants \(C,h_0>0\), depending only on the fixed geometric sets, \(d,r,M\), and the mesh regularity, such that for all \(0<h\leq h_0\), we have
	\begin{align}\label{eq:full-result}
		\lVert \mathcal{Q}_h(q_1-q_2)\rVert_{H^{-r}(\mathbb{R}^d)}
		\leq C\Phi_r\bigl(h^{1/2}+[\log(2+\delta_\partial^{-1})]^{-1}\bigr).
	\end{align}
\end{theorem}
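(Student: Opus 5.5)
Throughout, let \(Q=\mathcal Q_h(q_1-q_2)\), \(P=\lVert Q\rVert_{L^\infty}\le 2M\), and use that \(Q\) is supported in a fixed ball \(B_R\). The plan is the standard frequency splitting at a cutoff \(\tau\):
\begin{align*}
\lVert Q\rVert_{H^{-r}}^2
=\int_{|\xi|\le\tau}(1+|\xi|^2)^{-r}|\widehat Q(\xi)|^2\,d\xi
+\int_{|\xi|>\tau}(1+|\xi|^2)^{-r}|\widehat Q(\xi)|^2\,d\xi .
\end{align*}
On the low-frequency part I invoke \cref{lem:fourier-full} with \(\tau=2s\), so that \(|\widehat Q(\xi)|\le \varepsilon:=C(\delta_\partial e^{Cs}+s^{-1}+h^2s^3+hs)\) there. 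I then choose \(s\) of order \(\min\{h^{-1/2},c\log(2+\delta_\partial^{-1})\}\), with the constant in front of the logarithm small enough that \(\delta_\partial e^{Cs}\le \delta_\partial^{1/2}\). This gives \(\varepsilon\le C t\) with \(t=h^{1/2}+[\log(2+\delta_\partial^{-1})]^{-1}\). In this range \(h^2s^3\le h^{1/2}\) and \(hs\le h^{1/2}\). If \(s\) falls below \(s_0\), i.e.\ \(t\gtrsim1\), the estimate is trivial from \(\lVert Q\rVert_{H^{-r}}\le C\lVert Q\rVert_{L^2}\le C\). The case \(\delta_\partial=0\) is handled by taking \(s\asymp h^{-1/2}\).

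The point that needs care is that the naive bound \(\varepsilon^2\tau^d\) on the low part loses a power of \(\tau\). The fix is to avoid using \(\tau=2s\) as the splitting frequency and instead keep \(\tau\) as a free parameter \(\tau\le 2s\). The low part is then at most \(\varepsilon^2\int_{|\xi|\le\tau}(1+|\xi|^2)^{-r}d\xi\). This is \(\lesssim\varepsilon^2\tau^{d-2r}\) for \(r<d/2\), \(\lesssim\varepsilon^2\log(2+\tau)\) for \(r=d/2\), and \(\lesssim\varepsilon^2\) for \(r>d/2\). The high part is bounded by \(\tau^{-2r}\lVert Q\rVert_{L^2}^2\le C\tau^{-2r}\).

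Balancing gives the three regimes. For \(r>d/2\), take \(\tau=\min\{2s,\varepsilon^{-1/r}\}\); both parts are then \(\lesssim\varepsilon^2\) provided \(\varepsilon^{-1/r}\lesssim s\). This does hold, because \(\varepsilon\gtrsim s^{-1}\) and \(r>1/2\) (as \(d\ge3\)), so \(\varepsilon^{-1/r}\le Cs^{1/r}\le Cs\). For \(r<d/2\), take \(\tau=\varepsilon^{-2/d}\le Cs\); this gives \(\varepsilon^{2r/d}\). For \(r=d/2\), the same choice produces the extra factor \(\sqrt{\log(2+\varepsilon^{-1})}\). Finally, since \(\Phi_r\) is increasing and \(\varepsilon\le Ct\), I conclude \(\lVert Q\rVert_{H^{-r}}\le C\Phi_r(t)\), after absorbing constants via \(\Phi_r(Ct)\le C'\Phi_r(t)\).

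The main obstacle is the low-frequency bookkeeping: checking that the split frequency can always be kept inside the admissible range \(|\xi|\le 2s\), with \(s\le ch^{-1/2}\) and \(s\ge s_0\), while still balancing against the high-frequency tail. I would first handle the regime where \(s\) is pinned by \(h^{-1/2}\), and then the regime where it is pinned by the logarithm.
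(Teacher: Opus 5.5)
Your proposal is correct and follows the paper's proof. Both take $s\asymp\min\{h^{-1/2},\log(\delta_\partial^{-1})\}$ in \cref{lem:fourier-full}, giving a low-frequency bound of size $\mu\asymp s^{-1}$, and then split frequencies while keeping the Sobolev weight. The paper uses the single radius $R=c_2\mu^{-2/d}$ in all three regimes, whereas you tune $\tau$ separately in each regime. Two small slips: $s^{1/r}\le s$ needs $r\ge1$, which holds because $r>d/2\ge3/2$; and $\tau=\varepsilon^{-2/d}$ must be scaled by a small constant, or $s_0$ enlarged, so that $\tau\le2s$ rather than merely $\tau\le Cs$.
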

\begin{proof}
	Let \(Q=\mathcal{Q}_h(q_1-q_2)\) and \(\mu=h^{1/2}+[\log(2+\delta_\partial^{-1})]^{-1}\). We may assume \(\mu\leq\mu_0\) for a sufficiently small fixed \(\mu_0>0\); otherwise, \cref{eq:full-result} follows directly from the a priori bound \(\lVert Q\rVert_{L^2(\mathbb R^d)}\leq2M|\Omega|^{1/2}\) by enlarging \(C\).
	
	We begin by choosing the CGO parameter as
	\begin{align*}
		s=c_1\min\{h^{-1/2},\log(\delta_\partial^{-1})\},
	\end{align*}
	where the logarithm is interpreted as \(+\infty\) at \(\delta_\partial=0\) and \(c_1>0\) is sufficiently small. This choice ensures \(s\asymp\mu^{-1}\) and \(s\geq s_0\). 
	Substituting this into the Fourier estimate \cref{eq:fourier-full}, the terms \(h^2s^3\) and \(hs\) are bounded by \(C\sqrt h\), while the boundary term satisfies
	\begin{align*}
		\delta_\partial e^{Cs}\leq C\delta_\partial^{1/2}\leq C[\log(2+\delta_\partial^{-1})]^{-1}.
	\end{align*}
	Consequently, we obtain a uniform low-frequency bound
	\begin{align*}
		|\widehat Q(\xi)|\leq C\mu,\qquad |\xi|\leq c\mu^{-1}.
	\end{align*}
	
	Next, we split the \(H^{-r}\) norm into low and high frequencies. By Plancherel's theorem, for any \(1\leq R\leq c\mu^{-1}\),
	\begin{align}\label{eq:splitting}
		\lVert Q\rVert_{H^{-r}(\mathbb{R}^d)}^2
		\leq C\mu^2\int_{|\xi|\leq R}(1+|\xi|^2)^{-r}\,d\xi
		+CR^{-2r}\lVert Q\rVert_{L^2(\mathbb{R}^d)}^2.
	\end{align}
	The low-frequency integral is controlled by
	\begin{align*}
		\int_{|\xi|\leq R}(1+|\xi|^2)^{-r}\,d\xi
		\leq C
		\begin{cases}
			R^{d-2r},&0<r<d/2,\\
			\log(2R),&r=d/2,\\
			1,&r>d/2.
		\end{cases}
	\end{align*}
	To balance the two terms in \cref{eq:splitting}, choose \(R=c_2\mu^{-2/d}\) with \(c_2>0\) fixed. Since \(d\geq3\), the condition \(1\leq R\leq c\mu^{-1}\) is satisfied for sufficiently small \(\mu\). Substituting this \(R\) and using the a priori bound for \(\lVert Q\rVert_{L^2(\mathbb R^d)}\), we deduce
	\begin{align*}
		\lVert Q\rVert_{H^{-r}(\mathbb R^d)}^2
		\leq C
		\begin{cases}
			\mu^{4r/d},&0<r<d/2,\\
			\mu^2\log(2+\mu^{-1}),&r=d/2,\\
			\mu^2,&r>d/2.
		\end{cases}
	\end{align*}
	Taking square roots yields exactly \cref{eq:full-result}.
\end{proof}
When the data discrepancy vanishes (\(\delta_\partial=0\)), \cref{eq:full-result} yields the mesh factors \(h^{r/d}\) for \(0<r<d/2\), \(h^{1/2}\sqrt{\log(2+h^{-1/2})}\) for \(r=d/2\), and \(h^{1/2}\) for \(r>d/2\). For the Cartesian realization in \cref{sec:cartesian} with zero boundary nodal potentials, \cref{app:fourier-norms} transfers these bounds to the corresponding negative discrete Fourier norm and identifies the variational boundary normalization. 
The \(\sqrt{h}\) low-frequency scale aligns with that in \cite[Theorem~2.7]{EdG}. While the unweighted low-frequency count in that work only yields the modulus \(\mu^{2r/(2r+d)}\), retaining the Sobolev weight in \cref{eq:splitting} provides the sharper modulus \(\Phi_r(\mu)\). In particular, this yields the bound \(O(\mu)\) for \(r>d/2\), a refinement that also applies to the low-frequency estimate in \cite{EdG}.

\subsection{Two-patch boundary localization}

We now combine the full-data estimate with the boundary continuation theorem to localize the observations. To extend the arguments of the boundary form from a prescribed patch, let $G\subset\Omega$ be a fixed connected open set containing a full interior neighborhood of $\partial\Omega$. Fix a cutoff function \(\eta\in C_c^\infty(\mathbb R^d;[0,1])\) equal to one near \(\partial\Omega\), such that \(\operatorname{supp}\eta\cap\Omega\) lies a positive distance from \(\Omega\setminus G\), and \(\operatorname{supp}\nabla\eta\cap\Omega\Subset A\) for some fixed open set \(A\Subset G\). Assume that the potential difference $p=q_1-q_2$ vanishes in $G$; for lumped mass, we require \(p_i=0\) at every vertex of the elements intersecting \(G\). We refer to these conditions as collar agreement. 
This agreement ensures \(c_h(p;u_h,v_h)=0\) whenever \(\operatorname{supp}v_h\Subset G\). Consequently, the equation for the difference of two solutions is homogeneous along the boundary propagation paths and on the cutoff support.

The following proposition bounds the full flux difference by its restriction to a single boundary patch, while retaining the size of the coefficient difference.
\begin{proposition}\label{prop:homogeneous-continuation}
	Assume \cref{eq:dir-infsup} and collar agreement, and let \(\Gamma\) contain a fixed nonempty patch in the interior of a face. Write $p=q_1-q_2$, $P=\lVert\mathcal Q_hp\rVert_{L^\infty(\mathbb R^d)}$, and $\beta_h=[\Lambda_h(q_1)-\Lambda_h(q_2)]g$ for \(g\in T_h\).
	There exist constants \(C,c,h_0>0\) and \(\vartheta_\Gamma\in(0,1)\) such that for all \(h\leq h_0\),
	\begin{align}\label{eq:homogeneous-continuation}
		\lVert \beta_h\rVert_{T_h^\prime}
		\leq C\lVert \beta_h\rVert_{T_{h,\Gamma}^\prime}^{\vartheta_\Gamma}
		(P\lVert g\rVert_{T_h})^{1-\vartheta_\Gamma}
		+Ce^{-ch^{-4/5}}P\lVert g\rVert_{T_h}.
	\end{align}
	The constants depend only on the fixed data, the collar, the observed patch, and \(C_D\), and are independent of \(p,g,h\) within the prescribed bounds.
\end{proposition}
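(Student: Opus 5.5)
The natural object is the difference of the two forward solutions, $w_h=u_1-u_2$, where $u_j=u_h[q_j,g]$. It has zero trace on all of $\partial\Omega$, so the plan is to recognize $\beta_h$ as the flux of $w_h$, apply \cref{thm:boundary-quc} to $w_h$ on the observed patch $\Gamma$, and then convert interior $H^1$ control of $w_h$ on an annulus into control of $\beta_h$ on the whole boundary via the cutoff $\eta$.

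\emph{Step 1: equations for $w_h$.} For any $v_h\in V_h(\Omega)$ with trace $k$, the definition \cref{eq:dtn} gives
\begin{align*}
a_{q_1,h}(w_h,v_h)=\langle\beta_h,k\rangle-c_h(p;u_2,v_h),
\end{align*}
since $a_{q_1,h}(u_2,v_h)=a_{q_2,h}(u_2,v_h)+c_h(p;u_2,v_h)$. Under collar agreement, $c_h(p;u_2,v_h)=0$ whenever $\operatorname{supp}v_h\Subset G$, where $G$ is relatively taken so that it includes boundary-touching tests. Thus \cref{eq:cauchy-interior-balance} holds with $F_h=0$ on $G$, and \cref{eq:cauchy-local-balance} holds near the face containing the patch with the flux functional $\beta_h$ restricted to $T_{h,\Gamma}$. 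Hence $D_h\le\lVert\beta_h\rVert_{T_{h,\Gamma}'}$.

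For the a priori size, I would not use $\lVert w_h\rVert_{L^2}\le C\lVert g\rVert_{T_h}$, since this would lose the factor $P$. Instead, $w_h\in V_{h,0}(\Omega)$ satisfies $a_{q_1,h}(w_h,v)=-c_h(p;u_2,v)$ for all $v\in V_{h,0}(\Omega)$. By \cref{eq:dir-infsup}, \cref{eq:forward-bound}, and the bound $|c_h(p;u,v)|\le CP\lVert u\rVert\lVert v\rVert$ (nodal norm equivalence in the lumped case), this gives $\lVert w_h\rVert_{H^1(\Omega)}\le CP\lVert g\rVert_{T_h}$. The same representation bounds $\lVert\beta_h\rVert_{T_h'}\le CP\lVert g\rVert_{T_h}$, so $M_h\le CP\lVert g\rVert_{T_h}+D_h$. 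In the interpolation bound we may assume $D_h\le CP\lVert g\rVert$; otherwise the estimate is trivial.

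\emph{Step 2: continuation.} Apply \cref{thm:boundary-quc} with $K$ an open set satisfying $\overline A\subset K\Subset G$ and connected $G$. This yields
\begin{align*}
\lVert w_h\rVert_{H^1(K)}\le C\lVert\beta_h\rVert_{T_{h,\Gamma}'}^{\vartheta}(P\lVert g\rVert)^{1-\vartheta}+Ce^{-ch^{-4/5}}P\lVert g\rVert.
\end{align*}

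\emph{Step 3: recovering the full flux.} For $k\in T_h$, take a minimum-norm lifting $v_h$ and test with $I_h(\eta v_h)$. This test has trace $k$ because $\eta=1$ near $\partial\Omega$, and its support lies in $\operatorname{supp}\eta\cap\overline\Omega$, where the potential term vanishes by collar agreement. Therefore $\langle\beta_h,k\rangle=a_{q_1,h}(w_h,I_h(\eta v_h))$.

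We must then exploit the fact that $w_h$ solves the homogeneous equation against tests in $V_{h,0}$ supported in $G$. Write $I_h(\eta v_h)=v_h-I_h((1-\eta)v_h)$. The function $I_h((1-\eta)v_h)$ lies in $V_{h,0}$, but it is not supported in $G$, so this decomposition does not directly localize. Instead, use the Dirichlet equation on all of $\Omega$: $a_{q_1,h}(w_h,\phi)=-c_h(p;u_2,\phi)$ for $\phi\in V_{h,0}$. This brings in the interior term with $p$, which is not small.

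The better route is the commutator approach of \cref{lem:cutoff-load}, following \cref{eq:commutator-exact} with $\chi=\eta$. By collar agreement and homogeneity on the support of $\eta$, the quantity $a_{q_1,h}(w_h,I_h(\eta v_h))$ differs from $a_{q_1,h}(I_h(\eta w_h),v_h)$ only by commutator terms supported in $A$, bounded by $C\lVert w_h\rVert_{H^1(A)}\lVert v_h\rVert_{H^1}$. Moreover, $I_h(\eta w_h)\in V_{h,0}$ is supported near the boundary. Combining these relations with the local equation of $w_h$ tested by $v_h-I_h(\eta v_h)$ pieces supported in $A$, one reduces $\langle\beta_h,k\rangle$ to terms bounded by $C\lVert w_h\rVert_{H^1(K)}\lVert k\rVert_{T_h}$.

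\emph{Main obstacle.} This last step is where I expect the difficulty. The cleanest formulation is the symmetric identity
\begin{align*}
\langle\beta_h,k\rangle=a(w_h,I_h(\eta v_h))-a(I_h(\eta w_h),v_h)+a(I_h(\eta w_h),v_h).
\end{align*}
Here the last term equals $a(w_h,\cdot)$ applied to $V_{h,0}$ functions in the collar, which should be rewritten through the adjoint homogeneous problem. If necessary, choose $v_h$ to be the discrete $\bar q$-harmonic lifting, which is uniformly stable by \cref{eq:dir-infsup}; then $a(I_h(\eta w_h),v_h)$ reduces to a commutator on $A$. Taking the supremum over $k$ and inserting Step 2 gives \cref{eq:homogeneous-continuation} with $\vartheta_\Gamma=\vartheta$.
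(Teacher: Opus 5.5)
Your Steps 1 and 2 coincide with the paper's proof. The choice you reach in your last paragraph, $v_h=\overline{u_h[q_1,\overline k]}$ (the discrete $\bar q_1$-harmonic lifting of $k$, uniformly stable by \cref{eq:forward-bound} after conjugation), is exactly how the paper closes Step 3.

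This choice is, however, the essential step rather than an optional fallback, and its effect is slightly different from what you describe. Since $I_h(\eta w_h)\in V_{h,0}(\Omega)$, it makes $a_{q_1,h}(I_h(\eta w_h),v_h)$ vanish identically; it does not merely ``reduce to a commutator''. Consequently $\langle\beta_h,k\rangle=a_{q_1,h}(w_h,I_h(\eta v_h))-a_{q_1,h}(I_h(\eta w_h),v_h)$ is exactly the commutator supported in $A$. With a minimum-norm lifting the argument does not close, and testing with $v_h-I_h(\eta v_h)$ does not rescue it, because that function's support reaches the interior, where $p$ need not vanish.
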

\begin{proof}
	First, we bound the size of the coefficient difference and the state. Let $u_j=u_h[q_j,g]$ and $w_h=u_1-u_2$. Subtracting the forward equations \cref{eq:forward} for $q_1$ and $q_2$, and applying the inf-sup estimate \cref{eq:dir-infsup}, we obtain
	\begin{align}\label{eq:homogeneous-forward-difference}
		\lVert w_h\rVert_{H^1(\Omega)}+\lVert \beta_h\rVert_{T_h^\prime}
		\leq CP\lVert g\rVert_{T_h}.
	\end{align}
	This follows by testing with zero-trace functions, which gives $a_{q_1,h}(w_h,v_h)=-c_h(p;u_2,v_h)$, and by using the a priori bound $\lVert u_2\rVert_{H^1(\Omega)}\leq C\lVert g\rVert_{T_h}$ from \cref{eq:forward-bound}. To control the flux norm $\lVert\beta_h\rVert_{T_h'}$, we use a minimum-norm trace lifting in the boundary balance identity
	\begin{align}\label{eq:boundary-balance}
		a_{q_1,h}(w_h,v_h)
		=\langle\beta_h,\operatorname{tr}v_h\rangle-c_h(p;u_2,v_h).
	\end{align}
	
	Second, we propagate this estimate to the cutoff transition region. For test functions supported in $G$, the potential term vanishes in \cref{eq:boundary-balance}. Applying the boundary Carleman estimate \cref{thm:boundary-quc} with $F_h=0$, from a sufficiently small ball at the observed face to the set $A$, and using \cref{eq:homogeneous-forward-difference}, we deduce
	\begin{align}\label{eq:collar-transition}
		\lVert w_h\rVert_{H^1(A)}
		\leq C\lVert\beta_h\rVert_{T_{h,\Gamma}^\prime}^{\vartheta_\Gamma}
		(P\lVert g\rVert_{T_h})^{1-\vartheta_\Gamma}
		+Ce^{-ch^{-4/5}}P\lVert g\rVert_{T_h}.
	\end{align}
	
	Finally, we recover the complete flux via duality. For any $k\in T_h$, let $v_h=\overline{u_h[q_1,\overline k]}$. Then $\operatorname{tr}v_h=k$ and $\lVert v_h\rVert_{H^1(\Omega)}\leq C\lVert k\rVert_{T_h}$ by \cref{eq:forward-bound}. By the bilinear symmetry \cref{eq:bilinear-symmetry} and the forward equation, $v_h$ satisfies $a_{q_1,h}(t_h,v_h)=0$ for all $t_h\in V_{h,0}(\Omega)$. 
	Since $\omega_h(\eta)\cap\Omega\subset G$ for small $h$, collar agreement gives $c_h(p;u_2,I_h(\eta v_h))=0$. Observing that $I_h(\eta v_h)$ has trace \(k\) and $I_h(\eta w_h)$ has zero trace, \cref{eq:boundary-balance} yields
	\begin{align}\label{eq:flux-cutoff-commutator}
		\langle\beta_h,k\rangle
		=a_{q_1,h}(w_h,I_h(\eta v_h))-a_{q_1,h}(I_h(\eta w_h),v_h).
	\end{align}
	Since \(\eta\) is real, the element contributions cancel wherever \(\eta\) is constant, leaving contributions only in $A$ for small $h$. By the product estimate \cref{eq:cutoff-product}, the map $z_h\mapsto I_h(\eta z_h)$ is uniformly stable in $H^1(A)$. For lumped mass, the potential terms in \cref{eq:flux-cutoff-commutator} cancel exactly; for consistent mass, they are bounded using $\lVert q_1\rVert_{L^\infty(\Omega)}\leq M$. Elementwise boundedness and the Cauchy--Schwarz inequality thus yield
	\begin{align*}
		|\langle\beta_h,k\rangle|
		\leq C\lVert w_h\rVert_{H^1(A)}\lVert v_h\rVert_{H^1(A)}
		\leq C\lVert w_h\rVert_{H^1(A)}\lVert k\rVert_{T_h}.
	\end{align*}
	Taking the supremum over unit traces \(k\) and using \cref{eq:collar-transition} proves the desired estimate \cref{eq:homogeneous-continuation}.
\end{proof}

For relatively open boundary sets $\Gamma_D,\Gamma_N\subset\partial\Omega$, define
\begin{align}\label{eq:two-patch-data}
	\delta_{\Gamma_D,\Gamma_N}(q_1,q_2)
	=\sup_{\substack{g\in T_{h,\Gamma_D},\ k\in T_{h,\Gamma_N}\\
			\lVert g\rVert_{T_h}=\lVert k\rVert_{T_h}=1}}
	|\langle[\Lambda_h(q_1)-\Lambda_h(q_2)]g,k\rangle|.
\end{align}
Here, the first index specifies the support of the Dirichlet input, and the second specifies the support of the flux observation. In particular, $\delta_{\partial\Omega,\Gamma}=\delta_\Gamma$. The next theorem combines \cref{prop:homogeneous-continuation,thm:full} to bound the coefficient based on independently prescribed input and observation patches.

\begin{theorem}\label{thm:patch}
	Assume the hypotheses of \cref{thm:full}, the uniform estimate \cref{eq:dir-infsup}, and collar agreement. Let $\Gamma_D$ and $\Gamma_N$ each contain a fixed nonempty face-interior patch; these patches may be disjoint. Write $\delta_{\mathrm{loc}}=\delta_{\Gamma_D,\Gamma_N}$ and $P=\lVert\mathcal Q_h(q_1-q_2)\rVert_{L^\infty(\mathbb R^d)}$.
	There exist constants \(C,c,h_0>0\) and \(\vartheta\in(0,1)\) such that for all \(0<h\leq h_0\),
	\begin{align}\label{eq:boundary-cont}
		\delta_\partial\leq C\delta_{\mathrm{loc}}^\vartheta P^{1-\vartheta}
		+Ce^{-ch^{-4/5}}P.
	\end{align}
	Moreover, for every $r>0$, we have
	\begin{align}\label{eq:patch-result}
		\lVert \mathcal Q_h(q_1-q_2)\rVert_{H^{-r}(\mathbb{R}^d)}
		\leq C\Phi_r\bigl(h^{1/2}+[\log(2+\delta_{\mathrm{loc}}^{-1})]^{-1}\bigr).
	\end{align}
	These constants depend only on the fixed geometry, collar, patches, potential bound, mesh regularity, \(C_D\), and the exponent \(r\) in \cref{eq:patch-result}.
\end{theorem}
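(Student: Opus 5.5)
The plan follows the reduction sketched in the introduction: two applications of \cref{prop:homogeneous-continuation}, separated by reciprocity, followed by \cref{thm:full}.

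\emph{Step 1: continuation with inputs on \(\Gamma_D\).} Fix \(g\in T_{h,\Gamma_D}\) with \(\lVert g\rVert_{T_h}=1\), and set \(\beta_h=[\Lambda_h(q_1)-\Lambda_h(q_2)]g\). By definition, \(\lVert\beta_h\rVert_{T_{h,\Gamma_N}'}\leq\delta_{\mathrm{loc}}\). Applying \cref{prop:homogeneous-continuation} with \(\Gamma=\Gamma_N\) gives
\begin{align*}
\lVert\beta_h\rVert_{T_h'}\leq C\delta_{\mathrm{loc}}^{\vartheta_1}P^{1-\vartheta_1}+Ce^{-ch^{-4/5}}P .
\end{align*}
Taking the supremum over such \(g\) shows that \(\delta_{\Gamma_D,\partial\Omega}\) obeys the same bound.

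\emph{Step 2: reciprocity.} The bilinear symmetry \cref{eq:bilinear-symmetry}, used exactly as in the proof of \cref{lem:alessandrini}, gives
\begin{align*}
\langle\Lambda_h(q)g,\overline k\rangle=\langle\Lambda_h(q)k,\overline g\rangle .
\end{align*}
Conjugation preserves both the spaces \(T_{h,\Gamma}\) and the quotient norm \(\lVert\cdot\rVert_{T_h}\), since conjugating a lifting keeps the \(H^1\) norm. Hence, for every \(k\in T_h\), the flux difference \([\Lambda_h(q_1)-\Lambda_h(q_2)]k\) tested against \(T_{h,\Gamma_D}\) has norm at most \(\delta_{\Gamma_D,\partial\Omega}\lVert k\rVert_{T_h}\). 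Equivalently, \(\delta_{\partial\Omega,\Gamma_D}=\delta_{\Gamma_D,\partial\Omega}\).

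\emph{Step 3: second continuation.} Apply \cref{prop:homogeneous-continuation} again, now with \(\Gamma=\Gamma_D\) and arbitrary \(g\in T_h\). This yields
\begin{align*}
\delta_\partial\leq C\delta_{\partial\Omega,\Gamma_D}^{\vartheta_2}P^{1-\vartheta_2}+Ce^{-ch^{-4/5}}P .
\end{align*}
Insert the bound from Step~1. Each term in Step~1 is bounded by \(CP\), either directly or by \cref{eq:homogeneous-forward-difference}. Then use subadditivity \((x+y)^{\vartheta_2}\leq x^{\vartheta_2}+y^{\vartheta_2}\), as in the finite-composition part of \cref{lem:truncated-optimization}. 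The result is
\begin{align*}
\delta_\partial\leq C\delta_{\mathrm{loc}}^{\vartheta}P^{1-\vartheta}+Ce^{-c'h^{-4/5}}P,\qquad \vartheta=\vartheta_1\vartheta_2 .
\end{align*}
The remainder term \(e^{-\vartheta_2 ch^{-4/5}}P^{\vartheta_2}P^{1-\vartheta_2}\) keeps the same exponential form with a smaller constant \(c'\). This proves \cref{eq:boundary-cont}.

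\emph{Step 4: insertion into the full-data estimate.} Apply \cref{thm:full}; its hypotheses are assumed, and \(P\leq 2M\). We need
\begin{align*}
[\log(2+\delta_\partial^{-1})]^{-1}\leq C\bigl(h^{1/2}+[\log(2+\delta_{\mathrm{loc}}^{-1})]^{-1}\bigr).
\end{align*}
The function \(t\mapsto[\log(2+t^{-1})]^{-1}\) is increasing and subadditive up to constants, so it suffices to bound each term of \cref{eq:boundary-cont} separately.
\begin{itemize}
\item For the data term, \([\log(2+(C\delta^\vartheta)^{-1})]^{-1}\leq C_\vartheta[\log(2+\delta^{-1})]^{-1}\), because a power only rescales the logarithm.
\item For the remainder term, \([\log(2+Ce^{ch^{-4/5}})]^{-1}\leq Ch^{4/5}\leq Ch^{1/2}\).
\end{itemize}
Since \(\Phi_r\) is increasing and \(\Phi_r(Ct)\leq C'\Phi_r(t)\), the estimate \cref{eq:patch-result} follows. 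The case \(\delta_{\mathrm{loc}}=0\) is handled by the same computation.

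\emph{Expected main obstacle.} The delicate point is Step~2. We must check that reciprocity transfers the \(\Gamma_D\)-restricted input norm exactly into the \(\Gamma_D\)-restricted observation norm, compatibly with the conjugations in the sesquilinear pairings. Concretely, this means verifying that \(\lVert\overline k\rVert_{T_h}=\lVert k\rVert_{T_h}\) and that \(\overline{T_{h,\Gamma}}=T_{h,\Gamma}\). Apart from this, the argument is only a careful bookkeeping of how the exponents compose.
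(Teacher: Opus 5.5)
Your proposal is correct and follows the paper's proof step for step. The sequence is the same: a first application of \cref{prop:homogeneous-continuation} with observed patch $\Gamma_N$ for inputs in $T_{h,\Gamma_D}$; reciprocity giving $\delta_{\Gamma_D,\partial\Omega}=\delta_{\partial\Omega,\Gamma_D}$ (your identity is the paper's \cref{eq:boundary-reciprocity} with $k$ replaced by $\overline k$); a second continuation from $\Gamma_D$ with composed exponent $\vartheta=\vartheta_1\vartheta_2$; and the logarithmic comparison with $h^{4/5}\leq h^{1/2}$ inserted into \cref{thm:full}.
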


\begin{proof}
	\emph{Step 1: Extend the flux observation to the full boundary.}
	For each unit trace $g\in T_{h,\Gamma_D}$, applying \cref{prop:homogeneous-continuation} with observed patch $\Gamma_N$ bounds the restricted flux difference by $\delta_{\mathrm{loc}}$. Taking the supremum over these Dirichlet traces yields
	\begin{align}\label{eq:first-boundary-continuation}
		\delta_{\Gamma_D,\partial\Omega}
		\leq C\delta_{\mathrm{loc}}^{\vartheta_N}P^{1-\vartheta_N}+Ce^{-ch^{-4/5}}P,
	\end{align}
	where $\vartheta_N$ is the exponent supplied by \cref{prop:homogeneous-continuation} for $\Gamma_N$.
	
	\emph{Step 2: Interchange input and observation by reciprocity.}
	Using $\overline{u_h[q,\overline k]}$ as a lifting of \(k\) in \cref{eq:dtn}, the bilinear symmetry \cref{eq:bilinear-symmetry} yields the reciprocity relation
	\begin{align}\label{eq:boundary-reciprocity}
		\langle\Lambda_h(q)g,k\rangle
		=\langle\Lambda_h(q)\overline k,\overline g\rangle.
	\end{align}
	Applying this to the difference of the two boundary maps, and noting that complex conjugation preserves trace norms and supports, we obtain
	\begin{align*}
		\delta_{\Gamma_D,\partial\Omega}=\delta_{\partial\Omega,\Gamma_D}.
	\end{align*}
	Consequently, \cref{eq:first-boundary-continuation} also controls the flux observed on $\Gamma_D$ for arbitrary unit Dirichlet traces on the full boundary.
	
	\emph{Step 3: Extend the Dirichlet input to the full boundary.}
	Applying \cref{prop:homogeneous-continuation} with observed patch $\Gamma_D$ and an arbitrary unit trace $g\in T_h$ (with exponent $\vartheta_D$), and taking the supremum over \(g\), we use Step 2 and \cref{eq:first-boundary-continuation} to deduce
	\begin{align*}
		\delta_\partial
		\leq C\delta_{\partial\Omega,\Gamma_D}^{\vartheta_D}P^{1-\vartheta_D}
		+Ce^{-ch^{-4/5}}P
		\leq C\delta_{\mathrm{loc}}^{\vartheta_D\vartheta_N}P^{1-\vartheta_D\vartheta_N}
		+Ce^{-c'h^{-4/5}}P.
	\end{align*}
	Here, the second inequality follows from the subadditivity $(a+b)^{\vartheta_D}\leq a^{\vartheta_D}+b^{\vartheta_D}$ for $a,b\geq0$, and the constant $c'>0$ is reduced to absorb the exponential remainder from the first continuation. This proves \cref{eq:boundary-cont} with $\vartheta=\vartheta_D\vartheta_N$.
	
	\emph{Step 4: Apply full-data stability.}
	Using the a priori bound $P\leq2M$ in \cref{eq:boundary-cont}, the right-hand side is controlled by a constant multiple of the larger of $\delta_{\mathrm{loc}}^\vartheta$ and $e^{-ch^{-4/5}}$. Taking the reciprocal logarithm gives
	\begin{align}\label{eq:patch-log-modulus}
		[\log(2+\delta_\partial^{-1})]^{-1}
		\leq C\bigl([\log(2+\delta_{\mathrm{loc}}^{-1})]^{-1}+h^{4/5}\bigr),
	\end{align}
	which naturally includes the case $\delta_{\mathrm{loc}}=0$. Since \(h^{4/5}\leq h^{1/2}\) for \(h\leq1\), applying the full-data estimate \cref{thm:full} and using the monotonicity of $\Phi_r$ and the subadditivity $\Phi_r(Ct)\leq C\Phi_r(t)$ yields the final result \cref{eq:patch-result}. The case of large data is trivially covered by the a priori \(L^2\) bound.
\end{proof}


\subsection{A finite-dimensional consequence}


Related Lipschitz stability results for finite-dimensional coefficient models in the continuous setting include \cite{AS2019,RSincich2022}. For the finite-dimensional consequence below, we retain the factor $P=\lVert\mathcal Q_h(q_1-q_2)\rVert_{L^\infty(\mathbb{R}^d)}$ from \cref{eq:fourier-full,eq:boundary-cont}.
\begin{corollary}\label{cor:homogeneous-inverse}
	Let $r>0$ and $P=\lVert\mathcal Q_h(q_1-q_2)\rVert_{L^\infty(\mathbb R^d)}>0$. Assume either the hypotheses of \cref{thm:full} with $\delta=\delta_\partial$, or those of \cref{thm:patch} with $\delta=\delta_{\mathrm{loc}}$. Then there exist constants \(C,h_0>0\) such that for all \(0<h\leq h_0\),
	\begin{align}\label{eq:homogeneous-inverse}
		\lVert\mathcal Q_h(q_1-q_2)\rVert_{H^{-r}(\mathbb R^d)}
		\leq CP\Phi_r\bigl(h^{1/2}+[\log(2+P/\delta)]^{-1}\bigr).
	\end{align}
	The constants share the dependence specified in the corresponding theorem and are independent of \(P\). The reciprocal logarithm is interpreted as zero when \(\delta=0\).
\end{corollary}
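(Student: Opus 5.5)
The plan is to rerun the proofs of \cref{thm:full} and \cref{thm:patch}, keeping the factor \(P\) explicit instead of replacing it by the a priori bound \(2M\). The key observation is that every error term in \cref{eq:fourier-full} and in \cref{eq:boundary-cont} is linear in \(P\), apart from the data term. Normalizing by \(P\) therefore reduces the statement to the unnormalized one with \(\delta\) replaced by \(\delta/P\).

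\emph{Step 1: homogeneous Fourier bound.} In the full-data case, \cref{eq:fourier-full} gives, for \(s_0\le s\le ch^{-1/2}\) and \(|\xi|\le 2s\),
\begin{align*}
|\widehat Q(\xi)|\le CP\bigl((\delta_\partial/P)e^{Cs}+s^{-1}+h^2s^3+hs\bigr).
\end{align*}
In the two-patch case, I first rewrite \cref{eq:boundary-cont} as
\begin{align*}
\delta_\partial/P\le C(\delta_{\mathrm{loc}}/P)^\vartheta+Ce^{-ch^{-4/5}}.
\end{align*}
Taking reciprocal logarithms as in \cref{eq:patch-log-modulus} then gives
\begin{align*}
[\log(2+P/\delta_\partial)]^{-1}\le C\bigl([\log(2+P/\delta_{\mathrm{loc}})]^{-1}+h^{4/5}\bigr).
\end{align*}
This reduces the two-patch case to the full-data case with \(\delta=\delta_\partial\), using \(h^{4/5}\le h^{1/2}\) and the monotonicity and quasi-subadditivity of \(\Phi_r\).

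\emph{Step 2: choice of \(s\) and splitting.} Set \(\tilde\delta=\delta_\partial/P\) and \(\mu=h^{1/2}+[\log(2+\tilde\delta^{-1})]^{-1}\). If \(\mu\) is not small, the bound \(\lVert Q\rVert_{L^2}\le P|\Omega|^{1/2}\) already gives \(\lVert Q\rVert_{H^{-r}}\le CP\le CP\Phi_r(\mu)\). Otherwise I take \(s=c_1\min\{h^{-1/2},\log\tilde\delta^{-1}\}\), exactly as in \cref{thm:full}. This yields \(\tilde\delta e^{Cs}\le\tilde\delta^{1/2}\le C\mu\), and hence \(|\widehat Q(\xi)|\le CP\mu\) for \(|\xi|\le c\mu^{-1}\). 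The frequency splitting \cref{eq:splitting} is then applied with low-frequency part \(C P^2\mu^2\int_{|\xi|\le R}(1+|\xi|^2)^{-r}\,d\xi\) and high-frequency part \(R^{-2r}\lVert Q\rVert_{L^2}^2\le CR^{-2r}P^2\). Choosing \(R=c_2\mu^{-2/d}\) gives \(\lVert Q\rVert_{H^{-r}}^2\le CP^2\Phi_r(\mu)^2\).

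\emph{Main obstacle.} The delicate point is the a priori \(L^2\) bound. It must be \(\lVert Q\rVert_{L^2(\mathbb R^d)}\le CP\), not \(CM\), which holds because \(Q\) is supported in a fixed bounded set and \(|Q|\le P\). I also need to check that the constants \(s_0\) and \(h_0\) in \cref{lem:fourier-full} and \cref{prop:homogeneous-continuation} do not depend on \(P\). This is clear, since those results hold uniformly for potentials bounded by \(M\), and \(P\) enters only linearly through the Alessandrini identity \cref{eq:alessandrini} and through \cref{eq:homogeneous-forward-difference}. Finally, when \(\delta=0\) the logarithmic term vanishes, and \(s=c_1h^{-1/2}\) is admissible.
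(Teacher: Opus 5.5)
Your proposal is correct and takes the same route as the paper: you normalize by \(P\) and rerun the frequency splitting of \cref{thm:full} for \(Q/P\) with data \(\delta_\partial/P\), using \(\lVert Q\rVert_{L^2}\le|\Omega|^{1/2}P\). In the two-patch case you divide \cref{eq:boundary-cont} by \(P\) and apply the logarithmic comparison of \cref{eq:patch-log-modulus}; your explicit treatment of the choice of \(s\), the large-\(\mu\) case, and the \(P\)-independence of \(s_0,h_0\) spells out details the paper leaves implicit.
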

\begin{proof}
	For the full-data estimate, we normalize by the amplitude \(P\). Dividing \cref{eq:fourier-full} by \(P\) and repeating the frequency-splitting argument from the proof of \cref{thm:full} for the normalized coefficient \(\mathcal{Q}_h(q_1-q_2)/P\) (whose \(L^2\) norm is uniformly bounded) yields the estimate with normalized data \(\delta_\partial/P\).
	
	For the local-data estimate, dividing the boundary continuation bound \cref{eq:boundary-cont} by \(P\) gives
	\begin{align*}
		\delta_\partial/P\leq C(\delta_{\mathrm{loc}}/P)^\vartheta+Ce^{-ch^{-4/5}}.
	\end{align*}
	Applying the logarithmic comparison used in \cref{eq:patch-log-modulus} to these normalized data then yields the desired local estimate. Throughout this normalization procedure, the a priori \(L^\infty\) bound \(M\) for the potentials remains unchanged.
\end{proof}

For coefficient differences restricted to a fixed finite-dimensional space, the mesh remainder in \cref{eq:homogeneous-inverse} can be absorbed under either mass realization. Let $\pi_h$ denote the coefficient projection. For consistent mass and $p\in L^\infty(\Omega;\mathbb C)$, we set $\pi_hp=p$. For lumped mass, we use the full nodal partition $\{D_i\}$ of $\Omega$ from the coefficient realization (including the control volumes at boundary vertices) and define
\begin{align*}
	(\pi_hp)_i=\frac1{m_i}\int_{D_i}p(x)\,dx.
\end{align*}

\begin{corollary}\label{cor:finite-dimensional}
	Fix a finite-dimensional subspace $W\subset L^\infty(\Omega;\mathbb C)$ independent of \(h\), and let $p\in W$ with $q_1-q_2=\pi_hp$. Let $\delta=\delta_\partial$ under the hypotheses of \cref{thm:full}, and $\delta=\delta_{\mathrm{loc}}$ under those of \cref{thm:patch}. Then there exist constants \(C,h_W>0\), depending only on \(W\) and the fixed data in the corresponding theorem, such that for all \(0<h\leq h_W\),
	\begin{align}\label{eq:finite-dimensional-stability}
		\lVert p\rVert_{L^\infty(\Omega)}\leq C\delta.
	\end{align}
\end{corollary}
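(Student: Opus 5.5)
The plan is to combine \cref{cor:homogeneous-inverse} with norm equivalence on the fixed finite-dimensional space \(W\). The remainder \(h^{1/2}\) will be absorbed for small \(h\), and then the logarithmic modulus will be inverted to give a linear bound.

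\emph{Step 1: norm equivalences on \(W\).} On \(W\), the norms \(\lVert\cdot\rVert_{L^\infty(\Omega)}\) and \(\lVert E\,\cdot\rVert_{H^{-r}(\mathbb R^d)}\) are equivalent, where \(E\) denotes zero extension. Indeed, \(E\) is injective and \(W\) is finite dimensional. I will fix \(r>d/2\), so that \(\Phi_r(t)=t\), and write \(\lVert p\rVert_{L^\infty}\leq C_W\lVert Ep\rVert_{H^{-r}}\). Next I compare \(\mathcal Q_h\pi_hp\) with \(Ep\).
\begin{itemize}
\item For consistent mass, \(\mathcal Q_h\pi_hp=Ep\), so there is nothing to compare.
\item For lumped mass, \(\mathcal Q_h\pi_hp\) is the piecewise-constant average of \(p\) on the cells \(D_i\subset B(x_i,Ch)\). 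For fixed \(\varphi\in H^{r}\) with \(r>d/2\), the duality \((Ep-\mathcal Q_h\pi_hp,\varphi)=\sum_i\int_{D_i}p(\varphi-\bar\varphi_{D_i})\) is bounded by \(\omega_\varphi(Ch)\lVert p\rVert_{L^1}\), where \(\omega_\varphi\) is the modulus of continuity of \(\varphi\).
\end{itemize}
A quantitative version of this bound needs some Hölder continuity of \(\varphi\). To get it, I will increase \(r\) so that \(H^{r}\subset C^{0,1}\), which gives \(\lVert Ep-\mathcal Q_h\pi_hp\rVert_{H^{-r}}\leq Ch\lVert p\rVert_{L^1}\leq C_Wh\lVert p\rVert_{L^\infty}\). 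I also use \(P=\lVert\mathcal Q_h\pi_hp\rVert_{L^\infty}\leq\lVert p\rVert_{L^\infty}\), because averaging does not increase the sup norm. In the other direction, \(\lVert p\rVert_{L^\infty}\leq C_WP\) for small \(h\), by the same comparison together with equivalence of norms. Hence \(P\asymp\lVert p\rVert_{L^\infty}\).

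\emph{Step 2: absorption.} If \(p=0\) the claim is trivial. Otherwise, apply \cref{eq:homogeneous-inverse} with this \(r\):
\begin{align*}
\lVert p\rVert_{L^\infty}\leq C_W\lVert \mathcal Q_h\pi_hp\rVert_{H^{-r}}+C_Wh\lVert p\rVert_{L^\infty}
\leq C P\bigl(h^{1/2}+[\log(2+P/\delta)]^{-1}\bigr)+C_Wh\lVert p\rVert_{L^\infty}.
\end{align*}
Using \(P\leq\lVert p\rVert_{L^\infty}\), I choose \(h_W\) so that \(C h_W^{1/2}+C_Wh_W\leq1/2\). Absorbing these terms gives \(\lVert p\rVert_{L^\infty}\leq 2C\lVert p\rVert_{L^\infty}[\log(2+P/\delta)]^{-1}\). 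If \(\delta=0\), this forces \(p=0\).

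\emph{Step 3: inverting the logarithm.} Dividing by \(\lVert p\rVert_{L^\infty}>0\) gives \(\log(2+P/\delta)\leq 2C\). Therefore \(P/\delta\leq e^{2C}\), that is, \(P\leq C\delta\). Combining this with \(\lVert p\rVert_{L^\infty}\leq C_WP\) from Step 1 yields \cref{eq:finite-dimensional-stability}.

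The main obstacle is Step 1 in the lumped case. The coefficient realization \(\mathcal Q_h\pi_hp\) must be close to \(Ep\) in \(H^{-r}\), with an error that is small relative to \(\lVert p\rVert_{L^\infty}\), and \(P\) must be comparable to \(\lVert p\rVert_{L^\infty}\) uniformly in \(h\). This should follow from the cell diameters being \(O(h)\) together with the choice of large \(r\).
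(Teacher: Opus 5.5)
Your proposal is correct and follows essentially the same route as the paper: finite-dimensional norm equivalence on \(W\), an \(O(h)\) negative-norm comparison between the lumped realization and the zero extension (via a Lipschitz Sobolev embedding), the two-sided bound \(P\asymp\lVert p\rVert_{L^\infty}\), then \cref{cor:homogeneous-inverse} with \(\Phi_r(t)=t\), absorption of \(h^{1/2}\) for small \(h\), and inversion of the logarithm. The differences are immaterial. The paper fixes \(r=d\), using \(d\geq3\) to get \(H^d\hookrightarrow W^{1,\infty}\), and divides by \(P_h\) before absorbing. You take a generic \(r>d/2+1\) and absorb directly in \(\lVert p\rVert_{L^\infty}\).
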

\begin{proof}
	We set \(r=d\) and let $p^0$ denote the zero extension of \(p\). The case \(W=\{0\}\) is immediate, so assume \(W\) is nontrivial. By finite-dimensional norm equivalence on \(W\), we have
	\begin{align*}
		\lVert p\rVert_{L^\infty(\Omega)}\leq C\lVert p^0\rVert_{H^{-d}(\mathbb R^d)}.
	\end{align*}
	
	Next, we estimate the discrepancy between the projected coefficient and its continuous counterpart in the \(H^{-d}\) norm. For the lumped mass case, let $\varphi\in H^d(\mathbb R^d)$. The cancellation of cell averages and the bound $\operatorname{diam}D_i\leq Ch$ yield
	\begin{align*}
		\Bigl|\int_{\mathbb R^d}(\mathcal Q_h\pi_hp-p^0)\overline\varphi\,dx\Bigr|
		&\leq\sum_i\frac1{m_i}\int_{D_i}\int_{D_i}|p(y)|\,|\varphi(x)-\varphi(y)|\,dy\,dx\\
		&\leq Ch\lVert p\rVert_{L^1(\Omega)}\lVert\nabla\varphi\rVert_{L^\infty(\mathbb R^d)}
		\leq Ch\lVert p\rVert_{L^\infty(\Omega)}\lVert\varphi\rVert_{H^d(\mathbb R^d)},
	\end{align*}
	where the last inequality relies on the Sobolev embedding $H^d(\mathbb R^d)\hookrightarrow W^{1,\infty}(\mathbb R^d)$, which is valid since $d\geq3$. Taking the supremum over unit \(\varphi\) gives
	\begin{align*}
		\lVert\mathcal Q_h\pi_hp-p^0\rVert_{H^{-d}(\mathbb R^d)}\leq Ch\lVert p\rVert_{L^\infty(\Omega)}.
	\end{align*}
	(For consistent mass, the left-hand side is identically zero.) For sufficiently small \(h\), this estimate and the finite-dimensional norm equivalence imply
	\begin{align}\label{eq:coefficient-uniform-injectivity}
		P_h\leq\lVert p\rVert_{L^\infty(\Omega)}\leq C\lVert\mathcal Q_h\pi_hp\rVert_{H^{-d}(\mathbb R^d)}\leq CP_h,
	\end{align}
	where \(P_h=\lVert\mathcal Q_h\pi_hp\rVert_{L^\infty(\mathbb R^d)}\), and the last inequality follows from \(\lVert\mathcal Q_h\pi_hp\rVert_{L^2(\mathbb R^d)}\leq|\Omega|^{1/2}P_h\).
	
	Finally, we establish the stability bound. If \(P_h=0\), then \(p=0\) by \cref{eq:coefficient-uniform-injectivity}, and the assertion holds trivially. Otherwise, applying \cref{eq:homogeneous-inverse} with \(\Phi_d(t)=t\), dividing by \(P_h>0\), and using the lower bound in \cref{eq:coefficient-uniform-injectivity} yields
	\begin{align*}
		1\leq C\bigl(h^{1/2}+[\log(2+P_h/\delta)]^{-1}\bigr).
	\end{align*}
	Choose \(h_W\leq h_0\) small enough so that \(Ch_W^{1/2}\leq1/2\). This forces \([\log(2+P_h/\delta)]^{-1}\geq1/(2C)\), which implies \(\delta\geq cP_h\). Combining this with \cref{eq:coefficient-uniform-injectivity} yields \(\lVert p\rVert_{L^\infty(\Omega)}\leq C\delta\), completing the proof of \cref{eq:finite-dimensional-stability} (including the uniqueness conclusion when \(\delta=0\)).
\end{proof}

Consequently, every fixed finite-dimensional coefficient space admits uniqueness and Lipschitz stability without a mesh remainder for all sufficiently small \(h\). \Cref{cor:finite-dimensional} applies, for example, to piecewise constant or piecewise polynomial potentials on a fixed partition, even as the forward mesh is refined independently. For lumped mass, control-volume averages circumvent the need to assign point values to \(L^\infty\) coefficients.

We note that the forward solvability and (for local data) collar agreement assumptions are imposed on \(q_1,q_2\), which differ by \(\pi_hp\). For local data, a sufficient condition for collar agreement after averaging is that \(p\) vanishes on a fixed open neighborhood \(G_+\) of the propagation collar \(G\), with \(\operatorname{dist}(G,\Omega\setminus G_+)>0\). Consequently, for sufficiently small \(h\), the control volumes associated with all vertices of elements meeting \(G\) lie in \(G_+\).

\appendix

\section{A continuous radial Carleman estimate}\label[appendix]{app:radial}
We prove the continuous estimate used in \cref{thm:radial-discrete}. Let \(D\) be a fixed convex polytope, \(x_*\in D\), and \(B_*\Subset D\) a fixed ball centered at \(x_*\). For the radial phase in \cref{eq:radial-weight}, set \(\psi=-|x-x_*|^2\) so that $\rho=e^{\lambda\psi}$ and $\theta=e^{s\rho}$.

The proof proceeds in three steps. First, we compute the commutator of the conjugated Laplacian with its symmetric and skew-symmetric parts to control the weighted \(L^2\) norm. Second, we recover the positive gradient term via an energy pairing, choosing the phase parameter \(\lambda\) large enough to absorb the lower-order terms outside the observation ball. Third, we extend the result from smooth convex domains to the given polytope by a standard approximation argument.

\begin{lemma}\label{lem:radial-cont}
	For any fixed, sufficiently large $\lambda$, there exist constants \(s_0, C > 0\), depending only on \(D\), \(B_*\), and the fixed phase, such that for all \(s\geq s_0\) and \(v\in H^2(D)\cap H_0^1(D)\),
	\begin{align}\label{eq:app-radial}
		s\lVert \theta\nabla v\rVert_{L^2(D)}^2+s^3\lVert \theta v\rVert_{L^2(D)}^2
		\leq C\bigl(\lVert \theta\Delta v\rVert_{L^2(D)}^2+s^3\lVert \theta v\rVert_{L^2(B_*)}^2\bigr).
	\end{align}
\end{lemma}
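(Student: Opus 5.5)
We follow the three steps announced before the statement and work first with smooth \(v\) vanishing on \(\partial D\), where \(D\) is replaced by a smooth convex domain. Set \(z=\theta v\), so that \(\theta(-\Delta)v=P_sz\) with
\begin{align*}
P_sz=-\Delta z+2s\nabla\rho\cdot\nabla z-\bigl(s^2|\nabla\rho|^2-s\Delta\rho\bigr)z .
\end{align*}
Split \(P_s=S+K\), where \(Sz=-\Delta z-s^2|\nabla\rho|^2z\) is the symmetric part and \(Kz=2s\nabla\rho\cdot\nabla z+s(\Delta\rho)z\) the skew-symmetric part. Expanding \(\lVert P_sz\rVert^2\geq 2\operatorname{Re}(Sz,Kz)\) gives the commutator \(([S,K]z,z)\) plus boundary terms. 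With \(\nabla\rho=\lambda\rho\nabla\psi\), the standard computation (Fursikov--Imanuvilov, or \cite{LR2012}) yields the interior bulk
\begin{align*}
4s^3\lambda^4\int\rho^3|\nabla\psi|^4|z|^2
+4s\lambda^2\int\rho|\nabla\psi\cdot\nabla z|^2,
\end{align*}
up to terms of relative order \(\lambda^{-1}\) and \(s^{-1}\) coming from \(D^2\psi=-2I\). Since \(|\nabla\psi|=2|x-x_*|\) is bounded below on \(D\setminus B_*\), we fix \(\lambda\) large so that the \(\lambda^4\) term dominates the \(\lambda^3\) error terms there, and then take \(s\) large. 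Inside \(B_*\) the degeneracy of \(\nabla\psi\) is harmless, because all such terms are bounded by \(Cs^3\lVert z\rVert_{L^2(B_*)}^2\), which is the observation term. This gives
\begin{align*}
s^3\lVert z\rVert^2\leq C\bigl(\lVert P_sz\rVert^2+s^3\lVert z\rVert_{L^2(B_*)}^2\bigr)+\text{boundary terms}.
\end{align*}

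The boundary terms are the main obstacle. Since \(z=0\) on \(\partial D\), only the term \(s\int_{\partial D}\partial_\nu\rho\,|\partial_\nu z|^2\) (with a sign) survives. Its sign is \(-\partial_\nu\rho=-\lambda\rho\,\partial_\nu\psi=2\lambda\rho\,(x-x_*)\cdot\nu\), which is nonnegative on the boundary of a convex domain containing \(x_*\) (star-shaped with respect to \(x_*\)). The plan is therefore to orient the integration by parts so that this term appears with a favourable sign on the left and can be dropped. This is the delicate bookkeeping, and it is exactly where convexity is used. I would verify it in the simplest case \(v\in C^\infty(\overline D)\cap H^1_0\) on a smooth convex domain. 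If the sign turned out unfavourable, the fallback is to add the observation through a cutoff near \(\partial D\), but the geometry suggests the sign is correct.

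To recover the gradient, pair \(P_sz\) with \(s\rho\,z\):
\begin{align*}
\operatorname{Re}(P_sz,s\rho z)=s\int\rho|\nabla z|^2-s^3\int\rho|\nabla\rho|^2|z|^2+\text{l.o.t.}
\end{align*}
Hence \(s\lVert\nabla z\rVert^2\leq C(s^{-1}\lVert P_sz\rVert^2+s^3\lVert z\rVert^2)\), after using that \(\rho\) is bounded above and below on \(D\). Combined with the previous step, and returning to \(v\) via \(\theta\nabla v=\nabla z-s\nabla\rho\,z\), this gives the estimate of \cref{lem:radial-cont} on smooth convex domains.

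Finally, for the polytope we approximate \(D\) from inside by smooth convex domains \(D_k\supset\overline{B_*}\) with uniformly controlled geometry, so that the constants depend only on \(\operatorname{diam}\), the inner ball and \(\lambda\). For \(v\in H^2(D)\cap H^1_0(D)\), we approximate by solutions of Dirichlet problems on \(D_k\) with data \(\Delta v\), using uniform \(H^2\) bounds on convex domains from \cite{Grisvard2011}. Alternatively, the estimate is proved for \(C^\infty_c\) functions and the boundary terms are controlled directly. In either case we pass to the limit.
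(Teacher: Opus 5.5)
Your overall structure (conjugation, symmetric/skew splitting, star-shapedness for the boundary term, a weighted energy pairing, inner approximation of the polytope) is the paper's. Your boundary sign is also right: with $z=0$ on $\partial D$ the commutator contains $-2s\int_{\partial D}\partial_\nu\rho\,|\partial_\nu z|^2\,dS\geq0$, so that term is simply dropped and no fallback is needed.

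The gap is in your first step. Since $D^2\rho=\lambda\rho(-2\mathrm{Id}+\lambda\nabla\psi\otimes\nabla\psi)$, the gradient part of the commutator is
\[
4s\lambda^2\int_D\rho\,|\nabla\psi\cdot\nabla z|^2\,dx-8s\lambda\int_D\rho\,|\nabla z|^2\,dx .
\]
The negative term is not \emph{of relative order $\lambda^{-1}$}. The positive term controls only the radial derivative $\nabla\psi\cdot\nabla z$, and it vanishes near $x_*$. The negative term contains the full gradient, tangential components included. No choice of $\lambda$ absorbs it, so the bound $s^3\lVert z\rVert^2\leq C(\lVert P_sz\rVert^2+s^3\lVert z\rVert_{L^2(B_*)}^2)$ does not follow from the commutator. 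Inside $B_*$ this gradient term is not bounded by the $L^2$ observation either.

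The missing step is to absorb this term with the energy pairing \emph{before} closing the zero-order estimate, keeping the weight. Pairing $P_sz$ with $\lambda\rho z$ gives
\[
s\lambda\int_D\rho|\nabla z|^2\,dx\leq\lVert P_sz\rVert^2+s^3\lambda^3\int_D\rho^3|\nabla\psi|^2|z|^2\,dx+C(\lambda)s^2\lVert z\rVert^2 .
\]
Inserting this into the commutator bound leaves the coefficient $s^3\lambda^3\rho^3(4\lambda|\nabla\psi|^4-16|\nabla\psi|^2)$ for $|z|^2$. This is positive outside $B_*$ once $\lambda|\nabla\psi|^2\geq8$ there. Only then do you fix $\lambda$, take $s$ large to absorb $C(\lambda)s^2\lVert z\rVert^2$, and reuse the pairing to recover the gradient.

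What makes this work is that the pairing produces $\lambda^3\rho^3|\nabla\psi|^2$, which is dominated pointwise by $\lambda^4\rho^3|\nabla\psi|^4$ outside $B_*$. Your version, $s\lVert\nabla z\rVert^2\leq C(s^{-1}\lVert P_sz\rVert^2+s^3\lVert z\rVert^2)$, obtained by bounding $\rho$ above and below, loses exactly this. The ratio $\sup\rho/\inf\rho$ is of size $e^{c\lambda}$, so the resulting zero-order term cannot be absorbed. Also, invoking it after your first step is circular, since that step already needed the gradient term controlled.

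Finally, drop the $C_c^\infty$ alternative in the last step. $C_c^\infty(D)$ is not dense in $H^2(D)\cap H_0^1(D)$ for the relevant norm, because it misses nonzero normal derivatives, which is exactly why the boundary term matters. Your main approximation route is the one to keep.
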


\begin{proof}
	\emph{The conjugated identity.} 
	Assume first that \(D\) is a smooth convex domain and \(v\) is a smooth zero-trace function. Set \(z=\theta v\) and write the conjugated Laplacian as \(P_s=\theta(-\Delta)\theta^{-1}=\mathsf{A}+\mathsf{B}\), where \(\mathsf{A}=-\Delta-s^2|\nabla\rho|^2\) and \(\mathsf{B}=2s\nabla\rho\cdot\nabla+s\Delta\rho\).
	Repeated integration by parts yields the commutator identity
	\begin{align}\label{eq:radial-commutator}
		2\operatorname{Re}(\mathsf{A}z,\mathsf{B}z)
		&=4s\int_D D^2\rho(\nabla z,\nabla\bar z)\,dx
		+4s^3\int_D D^2\rho(\nabla\rho,\nabla\rho)|z|^2\,dx\notag\\
		&\quad-s\int_D\Delta^2\rho\,|z|^2\,dx
		-2s\int_{\partial D}\partial_\nu\rho\,|\partial_\nu z|^2\,dS.
	\end{align}
	Here $\nu$ is the outward unit normal. The boundary contribution is nonnegative, since convexity of \(D\) with respect to \(x_*\in D\) gives $\partial_\nu\rho=-2\lambda\rho(x-x_*)\cdot\nu\leq0$.
	Using the explicit Hessian $D^2\rho=\lambda\rho(-2\mathrm{Id}+\lambda\nabla\psi\otimes\nabla\psi)$ and discarding nonnegative terms in \cref{eq:radial-commutator} (specifically, $\lambda(\nabla\psi\cdot\nabla z)^2$), we obtain the lower bound
	\begin{align}\label{eq:radial-first}
		\lVert P_sz\rVert_{L^2(D)}^2
		&\geq-8s\lambda\int_D\rho|\nabla z|^2\,dx
		+4s^3\lambda^3\int_D\rho^3
		\bigl(\lambda|\nabla\psi|^4-2|\nabla\psi|^2\bigr)|z|^2\,dx
		-C(\lambda)s\lVert z\rVert_{L^2(D)}^2.
	\end{align}
	
	\emph{Recovery of coercive terms.} 
	The bound \cref{eq:radial-first} has a positive zeroth-order term but a negative gradient term. To close the estimate, we pair $P_sz$ with $\lambda\rho z$. This yields the energy identity
	\begin{align}\label{eq:radial-energy}
		s\lambda\int_D\rho|\nabla z|^2\,dx
		&\leq\lVert P_sz\rVert_{L^2(D)}^2
		+s^3\lambda^3\int_D\rho^3|\nabla\psi|^2|z|^2\,dx
		+C(\lambda)s^2\lVert z\rVert_{L^2(D)}^2,
	\end{align}
	where we used Young's inequality to absorb the linear term $s\lambda\operatorname{Re}(P_sz,\rho z)$.
	Substituting \cref{eq:radial-energy} into \cref{eq:radial-first} gives
	\begin{align}\label{eq:radial-positive}
		9\lVert P_sz\rVert_{L^2(D)}^2\geq
		s^3\lambda^3\int_D\rho^3\bigl(4\lambda|\nabla\psi|^4-16|\nabla\psi|^2\bigr)|z|^2\,dx
		-C(\lambda)s^2\lVert z\rVert_{L^2(D)}^2.
	\end{align}
	We now fix $\lambda$ sufficiently large so that $\lambda|\nabla\psi|^2\geq8$ outside the observation ball \(B_*\). Then the coefficient of $s^3|z|^2$ in \cref{eq:radial-positive} has a strictly positive lower bound on $D\setminus B_*$. For large \(s\), the negative term $-C(\lambda)s^2\lVert z\rVert_{L^2(D)}^2$ can be absorbed into this positive exterior contribution, leaving only an interior remainder $Cs^3\lVert z\rVert_{L^2(B_*)}^2$. This controls $s^3\lVert z\rVert_{L^2(D)}^2$, and \cref{eq:radial-energy} subsequently controls $s\lVert\nabla z\rVert_{L^2(D)}^2$. Finally, the identity $\theta\nabla v=\nabla z-s\nabla\rho\,z$ proves \cref{eq:app-radial} for smooth convex domains.
	
	\emph{Passage to a convex polytope.} 
	It remains to extend the estimate to the given polytope \(D\). Choose smooth convex domains $D_j\uparrow D$ containing $B_*$ for large $j$. Given $v\in H^2(D)\cap H_0^1(D)$, set $f=(-\Delta+1)v$ and solve $(-\Delta+1)v_j=f|_{D_j}$ with zero boundary trace. Let $\widetilde v_j$ denote the zero extension to \(D\). The variational equations bound $\widetilde v_j$ in \(H_0^1(D)\). By testing against compactly supported functions, every weak limit coincides with \(v\). The energy identities
	\begin{align*}
		\lVert\widetilde v_j\rVert_{H^1(D)}^2
		=(f,\widetilde v_j)_{L^2(D)}
		\longrightarrow(f,v)_{L^2(D)}
		=\lVert v\rVert_{H^1(D)}^2
	\end{align*}
	thus yield strong $H_0^1(D)$ convergence. Furthermore, the zero-extended Laplacians satisfy
	\begin{align*}
		\widetilde{\Delta v_j}
		=\widetilde v_j-\mathbf{1}_{D_j}f
		\longrightarrow v-f=\Delta v\quad\text{in }L^2(D).
	\end{align*}
	The constants in \cref{eq:app-radial} on $D_j$ depend only on fixed phase derivatives and the uniform sign of the boundary term in \cref{eq:radial-commutator}, making them uniform in \(j\). By density, \cref{eq:app-radial} extends to $H^2(D_j)\cap H_0^1(D_j)$. Passing to the limit for each fixed \(s\) then proves \cref{eq:app-radial} on \(D\).
\end{proof}

\section{Cartesian Fourier norms and boundary normalization}\label[appendix]{app:fourier-norms}

Standard Cartesian finite differences admit a lumped-mass finite-element realization, as established in \cref{sec:cartesian}. However, our inverse stability results are stated in the continuous negative Sobolev norm \(H^{-r}(\mathbb{R}^d)\), whereas discrete boundary data naturally live in grid-based norms. This appendix serves two purposes: first, to prove that the continuous norm controls the corresponding discrete Fourier norm; and second, to clarify the normalization of the variational boundary pairing on the Cartesian grid, showing that it matches the standard discrete Dirichlet-to-Neumann operator norm.

\subsection*{Comparison of continuous and discrete negative norms}

Let \(h=1/N\) and let \(q_i\) be nodal values on the unit cube, vanishing at the boundary nodes. We regard these values as an \(N\)-periodic array. Let \(Q\) be the piecewise constant reconstruction on centered interior dual cubes, extended by zero on the remaining boundary strip and outside the cube. Let \(Q_{\mathrm{per}}\) be its periodization on \(\mathbb{T}^d=\mathbb{R}^d/\mathbb{Z}^d\). Define the discrete Fourier coefficients by
\begin{align*}
	F_hq(k)=h^d\sum_i q_i e^{-2\pi i k\cdot x_i}.
\end{align*}
Let \(\mathcal{K}_N\) be a centered set of representatives modulo \(N\mathbb{Z}^d\). The corresponding discrete negative norm is defined by
\begin{align*}
	|q|_{H_h^{-r}}^2=\sum_{k\in\mathcal{K}_N}(1+|k|^2)^{-r}|F_hq(k)|^2.
\end{align*}
The following lemma compares this discrete norm with its continuous counterpart, showing that the \(H^{-r}(\mathbb{R}^d)\) norm is stronger.
\begin{lemma}\label{lem:fourier-norm-compare}
	For every \(r>0\), there exists a constant \(C>0\), depending only on \(d\) and \(r\), such that
	\begin{align}\label{eq:negative-norm-compare}
		|q|_{H_h^{-r}}\leq C\lVert Q\rVert_{H^{-r}(\mathbb{R}^d)}.
	\end{align}
	The same bound holds for noncentered frequency representatives.
\end{lemma}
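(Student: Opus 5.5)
The idea is to identify the discrete coefficients \(F_hq(k)\) with samples of the continuous Fourier transform of \(Q\), up to a multiplier bounded below on \(\mathcal K_N\). Then the claim reduces to a sampling inequality for compactly supported distributions, which I would prove by periodic duality with a fixed cutoff.

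\emph{Step 1: exact sampling identity.} Write \(Q=\sum_i q_i\mathbf 1_{\square_i}\), where \(\square_i=x_i+[-h/2,h/2]^d\) are the interior dual cubes; boundary nodes carry \(q_i=0\). Here I use the convention \(\widehat Q(\xi)=\int Q e^{-i\xi\cdot x}\,dx\), which differs from the paper's normalization only by a reflection and so does not affect \(H^{-r}\) norms. Integrating the exponential over each cube gives
\begin{align*}
\widehat Q(2\pi k)=F_hq(k)\,m(hk),\qquad m(\eta)=\prod_{j=1}^d\frac{\sin(\pi\eta_j)}{\pi\eta_j}.
\end{align*}
For \(k\in\mathcal K_N\) centered, we have \(|hk_j|\le 1/2\), so \(|m(hk)|\ge(2/\pi)^d\). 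Hence
\begin{align*}
|q|_{H_h^{-r}}^2\le C\sum_{k\in\mathbb Z^d}(1+|k|^2)^{-r}|\widehat Q(2\pi k)|^2 .
\end{align*}

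\emph{Step 2: periodic comparison.} Since \(Q\) is supported in the closed unit cube, a fundamental cell up to a null set, the Fourier coefficients of \(Q_{\mathrm{per}}\) are exactly \(\widehat Q(2\pi k)\). The right-hand side above is therefore \(\lVert Q_{\mathrm{per}}\rVert_{H^{-r}(\mathbb T^d)}^2\), up to constants. It remains to prove
\begin{align*}
\lVert Q_{\mathrm{per}}\rVert_{H^{-r}(\mathbb T^d)}\le C\lVert Q\rVert_{H^{-r}(\mathbb R^d)}.
\end{align*}
Fix \(\chi\in C_c^\infty(\mathbb R^d)\) equal to one on a neighborhood of \([0,1]^d\). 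For a trigonometric polynomial \(\varphi\) on \(\mathbb T^d\), viewed as a periodic function on \(\mathbb R^d\), we have
\begin{align*}
\langle Q_{\mathrm{per}},\varphi\rangle_{\mathbb T^d}=\langle Q,\chi\varphi\rangle_{\mathbb R^d}.
\end{align*}
The key estimate is
\begin{align*}
\lVert\chi\varphi\rVert_{H^r(\mathbb R^d)}\le C\lVert\varphi\rVert_{H^r(\mathbb T^d)}.
\end{align*}
For integer \(r\) this follows from the Leibniz rule: \(\operatorname{supp}\chi\) meets finitely many periods, and each derivative of \(\varphi\) is controlled by the periodic norm. Noninteger \(r\) then follows by complex interpolation. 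Taking the supremum over \(\varphi\) yields the periodic bound and hence \cref{eq:negative-norm-compare}, with \(C\) depending only on \(d,r\) through \(\chi\).

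I expect this multiplier bound for \(\chi\varphi\) to be the only nonroutine point. Should interpolation be inconvenient, I would instead check directly that \(\widehat{\chi\varphi}=\sum_k\hat\varphi(k)\widehat\chi(\cdot-2\pi k)\), then use Peetre's inequality \((1+|\xi|^2)^{r}\le C(1+|k|^2)^{r}(1+|\xi-2\pi k|^2)^{r}\) and Schur's test, since \(\widehat\chi\) decays rapidly.

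\emph{Step 3: noncentered representatives.} \(F_hq\) is \(N\mathbb Z^d\)-periodic in \(k\). A noncentered representative \(k'=k+Nm\) of a centered \(k\in\mathcal K_N\) satisfies \(|k'_j|\ge|k_j|\) coordinatewise, up to the tie at \(|hk_j|=1/2\), where the inequality still holds with a constant factor. Hence \((1+|k'|^2)^{-r}\le C(1+|k|^2)^{-r}\), and the noncentered norm is bounded by the centered one.
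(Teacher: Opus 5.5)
Your proposal is correct and follows the paper's own route: the sinc factorization linking \(F_hq(k)\) to the Fourier coefficients of \(Q_{\mathrm{per}}\), with the \((2/\pi)^d\) lower bound on the principal cell; duality with a fixed cutoff equal to one near \([0,1]^d\) to pass from \(H^{-r}(\mathbb{T}^d)\) to \(H^{-r}(\mathbb{R}^d)\); and the alias-weight comparison for noncentered representatives. The only difference is that the paper also proves the reverse bound through the aliasing Parseval identity \(\sum_{\ell}\prod_{j}\operatorname{sinc}^2\bigl(\pi(k_j/N+\ell_j)\bigr)=1\), and, as you correctly note, the one-sided statement does not need it.
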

\begin{proof}
	We first compare the discrete norm with the periodic norm. Let 
	\(\widehat{Q_{\mathrm{per}}}(k)=\int_{[0,1]^d}Q_{\mathrm{per}}(x)e^{-2\pi i k\cdot x}\,dx\) 
	be the periodic Fourier transform, and set \(\operatorname{sinc}(t)=\sin(t)/t\) (with \(\operatorname{sinc}(0)=1\)). Integrating over the dual cubes yields
	\begin{align*}
		\widehat{Q_{\mathrm{per}}}(k+N\ell)=F_hq(k)
		\prod_{j=1}^d\operatorname{sinc}\bigl(\pi(k_j/N+\ell_j)\bigr),\qquad \ell\in\mathbb{Z}^d.
	\end{align*}
	In the principal cell, the modulus of the sinc product is at least \((2/\pi)^d\). For centered representatives, we have \(|k_j+N\ell_j|\geq|k_j|\) in each coordinate, and the Parseval identity for the piecewise constant cell basis gives
	\begin{align*}
		\sum_{\ell\in\mathbb{Z}^d}\prod_{j=1}^d
		\operatorname{sinc}^2\bigl(\pi(k_j/N+\ell_j)\bigr)=1.
	\end{align*}
	Grouping these aliasing terms thus establishes the equivalence
	\begin{align*}
		\lVert Q_{\mathrm{per}}\rVert_{H^{-r}(\mathbb{T}^d)}\asymp |q|_{H_h^{-r}}.
	\end{align*}
	
	Next, we relate the periodic norm to the whole-space norm. Multiplying a periodic test function by a fixed smooth cutoff equal to one on a neighborhood of \([0,1]^d\), local multiplication and a finite covering argument show that its whole-space \(H^r\) norm is controlled by the periodic \(H^r\) norm. By duality, we obtain
	\begin{align*}
		\lVert Q_{\mathrm{per}}\rVert_{H^{-r}(\mathbb{T}^d)}\leq C\lVert Q\rVert_{H^{-r}(\mathbb{R}^d)}.
	\end{align*}
	For noncentered representatives, the frequency weight does not exceed that of the centered alias; any duplicated endpoints merely alter the bound by a dimension-dependent constant. Combining these comparisons yields \cref{eq:negative-norm-compare}.
\end{proof}

\subsection*{Normalization of the boundary pairing}

We finally verify that the variational boundary flux in our discrete Calder\'on problem matches the standard normalization of the Cartesian finite-difference Dirichlet-to-Neumann map. Let \(\Omega_h\) and \(\partial\Omega_h\) denote the interior and boundary grid nodes, and set \(\Gamma_h=\Gamma\cap\partial\Omega_h\). The boundary flux pairing uses the boundary measure \(h^{d-1}\sum_{\partial\Omega_h}\), while the interior mass uses \(h^d\sum_{\Omega_h}\). This is precisely the normalization in the variational Green identity for the Cartesian Laplacian \cref{eq:cartesian-laplacian}; the factor \(1/h\) in \cite[(2.16)]{EdG} converts its boundary sum to this measure. Note that any boundary-to-boundary stiffness block independent of \(q\) cancels in a difference of Dirichlet-to-Neumann maps.

Equip boundary nodal values with the norm \(\lVert g\rVert_{L_h^2(\Gamma_h)}^2=h^{d-1}\sum_{x_i\in\Gamma_h}|g_i|^2\). Identifying each variational boundary functional with its Riesz representative in \(L_h^2(\partial\Omega_h)\), uniform trace boundedness yields
\begin{align*}
	\delta_\Gamma
	\leq C\lVert \Lambda_h(q_1)-\Lambda_h(q_2)\rVert_{L_h^2(\partial\Omega_h)\to L_h^2(\Gamma_h)}.
\end{align*}
This shows that the comparison between the \(L^2\) data norm and the natural trace norm is uniform in \(h\). Consequently, when \(\Gamma=\partial\Omega\) and the discrepancy is measured in this \(L_h^2\) operator norm, the estimate \cref{eq:full-result} holds in \(H_h^{-r}\) with the same modulus \(\Phi_r\).

\bibliographystyle{siam-paper-no}
\bibliography{references}

@article{Boyer2010,
  author = {Boyer, Franck and Hubert, Florence and {Le Rousseau}, J{\'e}r{\^o}me},
  title = {Discrete {Carleman} estimates for elliptic operators and uniform controllability of semi-discretized parabolic equations},
  journal = {J. Math. Pures Appl.},
  volume = {93},
  number = {3},
  pages = {240--276},
  year = {2010},
  doi = {10.1016/j.matpur.2009.11.003}
}

@book{ern2004,
  author    = {Ern, Alexandre and Guermond, Jean-Luc},
  title     = {Theory and Practice of Finite Elements},
  series    = {Applied Mathematical Sciences},
  volume    = {159},
  publisher = {Springer},
  address   = {New York},
  year      = {2004},
  doi       = {10.1007/978-1-4757-4355-5}
}

@article{EdG,
  author = {Ervedoza, Sylvain and de Gournay, Fr{\'e}d{\'e}ric},
  title = {Uniform stability estimates for the discrete {Calder{\'o}n} problems},
  journal = {Inverse Problems},
  volume = {27},
  number = {12},
  pages = {Paper No.~125012},
  year = {2011},
  doi = {10.1088/0266-5611/27/12/125012}
}

@article{FBRRS,
  author = {Fern{\'a}ndez-Bertolin, Aingeru and Roncal, Luz and R{\"u}land, Angkana and Stan, Diana},
  title = {Discrete {Carleman} estimates and three balls inequalities},
  journal = {Calc. Var. Partial Differential Equations},
  volume = {60},
  pages = {Paper No.~239},
  year = {2021},
  doi = {10.1007/s00526-021-02098-z}
}

@book{Grisvard2011,
  author = {Grisvard, Pierre},
  title = {Elliptic Problems in Nonsmooth Domains},
  series = {Classics in Applied Mathematics},
  volume = {69},
  publisher = {Society for Industrial and Applied Mathematics},
  address = {Philadelphia},
  year = {2011},
  doi = {10.1137/1.9781611972030}
}

@article{Lecaros2023a,
  author = {Lecaros, Rodrigo and Ortega, Jaime H. and P{\'e}rez, Ariel and {De Teresa}, Luz},
  title = {Discrete {Calder{\'o}n} problem with partial data},
  journal = {Inverse Problems},
  volume = {39},
  number = {3},
  pages = {Paper No.~035001},
  year = {2023},
  doi = {10.1088/1361-6420/acb0f8}
}

@article{LR2012,
  author = {{Le Rousseau}, J{\'e}r{\^o}me and Lebeau, Gilles},
  title = {On {Carleman} estimates for elliptic and parabolic operators. {Applications} to unique continuation and control of parabolic equations},
  journal = {ESAIM Control Optim. Calc. Var.},
  volume = {18},
  number = {3},
  pages = {712--747},
  year = {2012},
  doi = {10.1051/cocv/2011168}
}

@article{SZ,
  author = {Scott, L. Ridgway and Zhang, Shangyou},
  title = {Finite element interpolation of nonsmooth functions satisfying boundary conditions},
  journal = {Math. Comp.},
  volume = {54},
  number = {190},
  pages = {483--493},
  year = {1990},
  doi = {10.1090/S0025-5718-1990-1011446-7}
}

@article{ZY,
  author = {Zhao, Xiaomeng and Yuan, Ganghua},
  title = {Stability estimate for the discrete {Calder{\'o}n} problem with partial data},
  journal = {ESAIM Control Optim. Calc. Var.},
  volume = {32},
  pages = {Paper No.~41},
  year = {2026},
  doi = {10.1051/cocv/2026023}
}

@article{RS2019,
  author = {R{\"u}land, Angkana and Salo, Mikko},
  title = {Quantitative {Runge} approximation and inverse problems},
  journal = {Int. Math. Res. Not. IMRN},
  year = {2019},
  number = {20},
  pages = {6216--6234},
  doi = {10.1093/imrn/rnx301}
}

@article{AS2019,
  author = {Alberti, Giovanni S. and Santacesaria, Matteo},
  title = {{Calder{\'o}n}'s inverse problem with a finite number of measurements},
  journal = {Forum Math. Sigma},
  volume = {7},
  pages = {Paper No.~e35},
  year = {2019},
  doi = {10.1017/fms.2019.31}
}

@article{RSincich2022,
  author = {R{\"u}land, Angkana and Sincich, Eva},
  title = {On {Runge} approximation and {Lipschitz} stability for a finite-dimensional {Schr{\"o}dinger} inverse problem},
  journal = {Appl. Anal.},
  volume = {101},
  number = {10},
  pages = {3655--3666},
  year = {2022},
  doi = {10.1080/00036811.2020.1738403}
}

@misc{ParabolicTransfer,
  author = {L{\"u}, Qi and Wang, Yu},
  title = {From continuous to fully discrete {Carleman} estimates: A transfer principle for parabolic schemes},
  year = {2026},
  howpublished = {arXiv:2610.02983 }
}

@article{ARRV2009,
  author = {Alessandrini, Giovanni and Rondi, Luca and Rosset, Edi and Vessella, Sergio},
  title = {The stability for the {Cauchy} problem for elliptic equations},
  journal = {Inverse Problems},
  volume = {25},
  number = {12},
  pages = {Paper No.~123004},
  year = {2009},
  doi = {10.1088/0266-5611/25/12/123004}
}

@article{Boyer2010a,
  author = {Boyer, Franck and Hubert, Florence and {Le Rousseau}, J{\'e}r{\^o}me},
  title = {Discrete {Carleman} estimates for elliptic operators in arbitrary dimension and applications},
  journal = {SIAM J. Control Optim.},
  volume = {48},
  number = {8},
  pages = {5357--5397},
  year = {2010},
  doi = {10.1137/100784278}
}

@article{GM2014,
  author = {Guadie, Maru and Malinnikova, Eugenia},
  title = {On three balls theorem for discrete harmonic functions},
  journal = {Comput. Methods Funct. Theory},
  volume = {14},
  pages = {721--734},
  year = {2014},
  doi = {10.1007/s40315-014-0076-9}
}

@article{LM2015,
  author = {Lippner, Gabor and Mangoubi, Dan},
  title = {Harmonic functions on the lattice: {Absolute} monotonicity and propagation of smallness},
  journal = {Duke Math. J.},
  volume = {164},
  number = {13},
  pages = {2577--2595},
  year = {2015},
  doi = {10.1215/00127094-3164790}
}

@incollection{Zuazua2007,
  author = {Zuazua, Enrique},
  title = {Controllability and observability of partial differential equations: some results and open problems},
  booktitle = {Handbook of Differential Equations: Evolutionary Equations. {V}ol. {III}},
  publisher = {Elsevier/North-Holland, Amsterdam},
  pages = {527--621},
  year = {2007},
  doi = {10.1016/S1874-5717(07)80010-7}
}

@article{MN2003,
  author = {Makridakis, Charalambos and Nochetto, Ricardo H.},
  title = {Elliptic reconstruction and a posteriori error estimates for parabolic problems},
  journal = {SIAM J. Numer. Anal.},
  volume = {41},
  number = {4},
  pages = {1585--1594},
  year = {2003},
  doi = {10.1137/S0036142902406314}
}

@article{NS1974,
  author = {Nitsche, Joachim A. and Schatz, Alfred H.},
  title = {Interior estimates for {Ritz--Galerkin} methods},
  journal = {Math. Comp.},
  volume = {28},
  pages = {937--958},
  year = {1974},
  doi = {10.1090/S0025-5718-1974-0373325-9}
}

@article{AK2012,
  author = {Alessandrini, Giovanni and Kim, Kyoungsun},
  title = {Single-logarithmic stability for the {Calder{\'o}n} problem with local data},
  journal = {J. Inverse Ill-Posed Probl.},
  volume = {20},
  number = {4},
  pages = {389--400},
  year = {2012},
  doi = {10.1515/jip-2012-0014}
}

@article{SU1987,
  author = {Sylvester, John and Uhlmann, Gunther},
  title = {A global uniqueness theorem for an inverse boundary value problem},
  journal = {Ann. of Math. (2)},
  volume = {125},
  number = {1},
  pages = {153--169},
  year = {1987},
  doi = {10.2307/1971291}
}

@article{Alessandrini1988,
  author = {Alessandrini, Giovanni},
  title = {Stable determination of conductivity by boundary measurements},
  journal = {Appl. Anal.},
  volume = {27},
  number = {1--3},
  pages = {153--172},
  year = {1988},
  doi = {10.1080/00036818808839730}
}

@article{KSU2007,
  author = {Kenig, Carlos E. and Sj{\"o}strand, Johannes and Uhlmann, Gunther},
  title = {The {Calder{\'o}n} problem with partial data},
  journal = {Ann. of Math. (2)},
  volume = {165},
  number = {2},
  pages = {567--591},
  year = {2007},
  doi = {10.4007/annals.2007.165.567}
}
\end{document}